\numberwithin{equation}{section}
\let\f=\frac
\let\om=\omega
\let\Om=\Omega
\let\na=\nabla
\let\pa=\partial
\def\div{\mbox{div}}
\def\curl{\mathop{\rm curl}\nolimits}
\def\bbT{\mathbb{T}}
\newcommand{\newcom}{\newcommand}
\newcommand{\vc}[1]{{\bf #1}}
\newcom{\BT}{{\mathbb{T}^2}}
\newcom{\ve}{\vc{e}}
\newcom{\vN}{\vc{N}}
\newcom{\vn}{\vc{n}}
\newcom{\vG}{\vc{G}}
\newcom{\vF}{\vc{F}}
\newcom{\vf}{\vc{f}}
\newcom{\vg}{\vc{g}}
\newcom{\vq}{\vc{q}}
\newcom{\vu}{\vc{u}}
\newcom{\vv}{\vc{v}}
\newcom{\vV}{\vc{V}}
\newcom{\vw}{\vc{w}}
\newcom{\vW}{\vc{W}}
\newcom{\vb}{\vc{b}}
\newcom{\vh}{\vc{h}}
\newcom{\vz}{\vc{z}}
\newcom{\vup}{\vu^{+}}
\newcom{\vum}{\vu^{-}}
\newcom{\vvp}{\vv^{+}}
\newcom{\vvm}{\vv^{-}}
\newcom{\vbp}{\vb^{+}}
\newcom{\vbm}{\vb^{-}}
\newcom{\vhp}{\vh^{+}}
\newcom{\vhm}{\vh^{-}}
\newcom{\Omp}{{\Om^+}}
\newcom{\Omm}{{\Om^-}}
\newcom{\vupm}{{\vu^{\pm}}}
\newcom{\vvpm}{{\vv^{\pm}}}
\newcom{\vbpm}{{\vb^{\pm}}}
\newcom{\vhpm}{{\vh^{\pm}}}
\newcom{\vFpm}{{\vF^{\pm}}}
\newcom{\vwp}{{\vc{w}^+}}
\newcom{\vwm}{{\vc{w}^-}}
\newcom{\vwpm}{{\vc{w}^{\pm}}}
\newcom{\Ompm}{{\Omega^{\pm}}}
\newcom{\vom}{\boldsymbol{\omega}}
\newcom{\vxi}{\boldsymbol{\xi}}
\newcom{\vop}{\vom^{+}}
\newcom{\vnu}{\boldsymbol{\nu}}
\newcom{\vopm}{\vom^{\pm}}
\newcom{\vjp}{\vj^+}
\newcom{\vjm}{\vj^-}
\newcom{\vjpm}{\vj^{\pm}}
\newcom{\vj}{\boldsymbol{\xi}}
\newcommand{\beq}{\begin{equation}}
\newcommand{\eeq}{\end{equation}}
\newcommand{\ben}{\begin{eqnarray}}
\newcommand{\een}{\end{eqnarray}}
\newcommand{\beno}{\begin{eqnarray*}}
\newcommand{\eeno}{\end{eqnarray*}}
\def\eqdefa{\buildrel\hbox{\footnotesize def}\over =}
\newtheorem{theorem}{Theorem}[section]
\newtheorem{definition}[theorem]{Definition}
\newtheorem{lemma}[theorem]{Lemma}
\newtheorem{proposition}[theorem]{Proposition}
\newtheorem{remark}[theorem]{Remark}
\begin{document}
\title[Free boundary problem in incompressible MHD]{Well-posedness of the free boundary problem in incompressible MHD with surface tension}

\author{Changyan Li}
\address{School of  Mathematical Sciences, Peking University, Beijing 100871, China}
\email{lcy941024@pku.edu.cn}

\author{Hui Li}
\address{Department of Mathematics, Zhejiang University, Hangzhou 310027, China}
\email{lihui92@zju.edu.cn}

\begin{abstract}
In this paper, we study the two phase flow problem with surface tension in the ideal incompressible magnetohydrodynamics. We first prove the local well-posedness of the two phase flow problem with surface tension, then demonstrate that as surface tension tends to zero, the solution of the two phase flow problem with surface tension converges to the solution of the two phase flow problem without surface tension.
\end{abstract}

\maketitle

%\tableofcontents

\section{Introduction}
\subsection{Presentation of the problem}
In this paper, we consider the two phase flow problem with surface tension in the ideal incompressible MHD. The incompressible MHD system can be written as
\begin{equation}\label{eq-ori}
\left\{\begin{aligned}
&\rho\partial_t \textbf{u}+\rho\textbf{u}\cdot\nabla\textbf{u}-\textbf{h}\cdot\nabla\textbf{h}
+\nabla p=0\quad\quad &\mathrm{in} \quad \mathcal{Q}_T,\\
&\mathrm{div} \textbf{u}=0, \quad \mathrm{div} \textbf{h}=0\quad\quad &\mathrm{in} \quad \mathcal{Q}_T,\\
&\partial_t \textbf{h}+\textbf{u}\cdot\nabla\textbf{h}-\textbf{h}\cdot\nabla\textbf{u}=0\quad\quad &\mathrm{in} \quad \mathcal{Q}_T,\\
\end{aligned}
\right.
\end{equation}
where $\textbf{u}$ is the fluids velocity, $\textbf{h}$ is the magnetic field, $p$ denotes the pressure.
We study the solution of \eqref{eq-ori} which are smooth on each side of a smooth interface $\Gamma(t)$ in a domain $\Omega$. More precisely, we let
\begin{align*}
\Omega=\mathbb{T}^2\times[-1,1], \quad \Gamma(t)=\{x\in\Omega|x_3=f(t,x'),x'=(x_1,x_2)\in\mathbb{T}^2\},
\end{align*}
\begin{align*}
\Omega_t^\pm=\{x\in\Omega|x_3\gtrless f(t,x'),x'\in\mathbb{T}^2\},\quad \mathcal{Q}_T^\pm=\bigcup_{t\in(0,T)}\{t\}\times\Omega_t^\pm.
\end{align*}
For simplicity of notation we write $\rho_{\Omega_t^\pm}=\rho^\pm$, where $\rho^\pm$ are two constants that represent the density of the fluids on each side of the free boundary. We also define
\begin{align*}
\textbf{u}^{\pm}:=\textbf{u}|_{\Omega_t^\pm},\quad \textbf{h}^{\pm}:=\textbf{h}|_{\Omega_t^\pm},\quad p^{\pm}:=p|_{\Omega_t^\pm},\quad
\end{align*}
which are smooth in $\mathcal{Q}_T$ and satisfy
\begin{equation}\label{equa2}
\left\{\begin{aligned}
&\rho^{\pm}\partial_t \textbf{u}^{\pm}+\rho^{\pm}\textbf{u}^{\pm}\cdot\nabla\textbf{u}^{\pm}-\textbf{h}\cdot\nabla\textbf{h}^{\pm}
+\nabla p^{\pm}=0 \quad\quad &\mathrm{in} \quad \mathcal{Q}_T^{\pm},\\
&\mathrm{div} \textbf{u}^{\pm}=0, \quad \mathrm{div} \textbf{h}^{\pm}=0\quad\quad &\mathrm{in} \quad \mathcal{Q}_T^{\pm}, \\
&\partial_t \textbf{h}^{\pm}+\textbf{u}^{\pm}\cdot\nabla\textbf{h}^{\pm}-\textbf{h}^{\pm}\cdot\nabla\textbf{u}^{\pm}=0\quad\quad &\mathrm{in} \quad \mathcal{Q}_T^{\pm}.
\end{aligned}
\right.
\end{equation}
On the moving interface $\Gamma_t$, we impose the following boundary conditions:

\begin{align}\label{equa:p}
&[p]:=p^+-p^-=\sigma H(f)=\sigma\nabla_{x'}\cdot
(\frac{\nabla_{x'}f}{\sqrt{1+|\nabla_{x'}f|^2}}),\\
& \textbf{u}^{\pm}\cdot \mathbf{N}=\partial_t f, \quad \textbf{h}^{\pm}\cdot \mathbf{N}=0 \quad \mathrm{on} \quad \Gamma_t,\label{bc-uhf}
\end{align}
where $\sigma$ is the surface tension coefficient, $H(f)$ is the mean curvature of the surface, $\mathbf{N}=(-\partial_1 f,-\partial_2 f,1)$ is the normal vector of the surface. Condition \eqref{equa:p} means that there is surface tension acting on the free boundary. Condition \eqref{bc-uhf} means that the free boundary is moving with the fluid, and the magnetic will not pass through the free boundary.

On the artificial boundary $\Gamma^\pm=\bbT^2\times\{\pm1\}$, we also assume that
\begin{align}\label{bc-uh}
u_3^\pm=0,\quad h_3^\pm=0 \quad \mathrm{on} \quad \Gamma^\pm.
\end{align}

The system (\ref{equa2}) is supplement with the initial data:
\begin{align}
\textbf{u}^{\pm}(0,x)=\textbf{u}^{\pm}_0(x), \quad \textbf{h}^{\pm}(0,x)=\textbf{h}^{\pm}_0(x) \quad \mathrm{in} \quad \Omega_0^\pm,
\end{align}
which satisfies
\begin{equation}\label{con-ini}
\left\{\begin{aligned}
&\mathrm{div}\textbf{u}_0^{\pm}=0,\mathrm{div}\textbf{h}_0^{\pm}=0 \quad & \mathrm{in}\quad \Omega_0^\pm,\\
&\textbf{u}_0^{+}\cdot\textbf{N}_0=\textbf{u}_0^{-}\cdot\textbf{N}_0,\textbf{h}_0^{+}\cdot\textbf{N}_0=\textbf{h}_0^{-}\cdot\textbf{N}_0=0 \quad& \mathrm{on} \quad\Gamma_0,\\
&u_{03}^\pm=0,\ h_{03}^\pm=0\quad& \mathrm{on} \quad\Gamma^\pm.
\end{aligned}
\right.
\end{equation}
The system \eqref{equa2}-\eqref{con-ini} is called the two-phase flow problem for incompressible MHD. One of main goals in this paper is to study the local well-posedness and the zero surface tension limit of this system.

We remark that the divergence-free restriction on $\vh^\pm$ is a compatibility condition. Applying the divergence operator to the third equation of \eqref{equa2}, we have
\begin{align*}
	\pa_t\div\vh^\pm+\vu^\pm\cdot\nabla\div\vh^\pm=0.
\end{align*}
Therefore, if $\div\vh^\pm_0=0$, the solution of \eqref{equa2}-\eqref{equa:p} will satisfies $\div\vh^\pm=0$ for $\forall t>0$. A similar argument can be applied to yield that $\vh^\pm\cdot\vN=0$ if $\vh^\pm_0\cdot\vN_{0}=0$.
\subsection{Background and related works}
In inviscid flow, a surface across which there is a discontinuity in fluid velocity is called a vortex sheet. In the absence of surface tension and magnetic field, it is well known that the vortex sheet problem of incompressible fluids is ill-posed due to the Kelvin-Helmholtz instability \cite{MajB}. During the past several decades, researches have found that such instability can be stabilized by surface tension. For irrotational flow, Ambrose \cite{AM1} and Ambrose-Masmoudi \cite{AM} proved the local well-posedness of vortex sheets with surface tension for in two and three dimensions respectively. For general problem with vorticity, Shatah-Zeng \cite{SZ} established a priori estimates in a geometric approach, and Cheng-Coutand-Shkoller \cite{CCS,CCS1} proved the local well-posedness of the three dimensional problems. For other results about the vortex sheet problems, we refer the readers to \cite{BHL,CO,Wsj3}.

In the mid-twentieth century, Syrovatskij \cite{Syr} and Axford \cite{Axf} found that the magnetic field has a stabilization effect on the Kelvin-Helmholtz instability. The Syrovatskij stability condition can be expressed as:
\begin{align}\label{2}
|[\mathbf{u}]|^2\le 2(|\mathbf{h}^+|^2+|\mathbf{h}^-|^2),\text{ on }\Gamma_t,
\end{align}
\begin{align*}
|[\mathbf{u}]\times\mathbf{h}^+|^2+|[\mathbf{u}]\times\mathbf{h}^-|^2\le 2|\mathbf{h}^+\times\mathbf{h}^-|^2,\text{ on }\Gamma_t,
\end{align*}
where $[\mathbf{u}]=(\vu^+-\vu^-)$.

In the recent decades, great progress has been made in studying the stabilizing effect of the Syrovatskij condition (\ref{2}). Morando-Trakhinin-Trebeschi\cite{MTT} proved a priori estimates with a loss of derivatives for the linearized system. Furthermore,
 under a strong stability condition
 \begin{align}\label{4}
\max(|[\mathbf{u}]\times\mathbf{h}^+,[\mathbf{u}]\times\mathbf{h}^-|)<|\mathbf{h}^+\times\mathbf{h}^-|,\text{ on }\Gamma_t,
\end{align}
Trakhinin\cite{Tra} proved an a priori estimate for the linearized problem without loss of derivative. For the nonlinear current-vortex sheet problem, Coulombel-Morando-Secchi-Trebeschi\cite{CMS} proved an a priori estimate under the strong stability condition (\ref{4}).  Recently, Sun-Wang-Zhang\cite{SWZ1} gave the first rigorous confirmation of the stabilizing effect of the magnetic field on Kelvin-Helmholtz instability under the Syrovatskij stability condition (\ref{4}). We also refer to some related works \cite{Tra1, Tra2, Chen, WY} on the compressible problem and works \cite{Tra-JDE, ST,HL, Hao, GW} on the plasma-vacuum problem.

The aim of this paper is to show the local well-posednss for the current-vortex sheet problem with surface tension. That is to say, the magnetic field do not destroy the stabilization effect of surface tension. Under additional assumption that the Syrovatskij condition holds, we also show that, as surface tension tends to zero, the solution of the two phase flow problem with surface tension converges to the solution of the two phase flow problem without surface tension. The framework we used in this paper is developed in \cite{SWZ1}. The basic idea is study the evolution equation of the free surface where the surface tension leads to a third-order term. Inspired by Alazard-Burq-Zuliy \cite{ABZ1}, we use paraproduct decomposition to analysis the most nontrivial third-order term, and find that the evolution equation is strictly hyperbolic.

In the free boundary problem of inviscid flow where there is only one fluid, the Rayleigh-Taylor instability, instead of the Kelvin-Helmholtz instability, may occur. There are a lot of remarkable literatures studying such problems \cite{Wsj1, Wsj2, ZpZzf, SZ2,CS,Da,ABZ,CL}. From a mathematical point of view, the elastodynamics have similar structures to the magnetohydrodynamics. In a very recent work, Gu-Lei \cite{GL} proved the local well-posedness of the free-boundary in incompressible elastodynamics with surface tension.
\subsection{Main results}
Now, let us state our main results.
\begin{theorem}\label{thm:1}
Assume $s\ge 6$ is an integer and $f_0\in H^{s+1}(\mathbb{T}^2)$,$\textbf{u}^{\pm}_0, \textbf{h}^{\pm}_0\in H^{s}(\Omega_0^\pm)$, $\sigma>0$, $\rho^+=\rho^-=1$, moreover we assume that there exists $c_0\in(0,\frac{1}{2})$ so that
\begin{align*}
-(1-2c_0)\le f_0\le(1-2c_0).
\end{align*}
Then there exists a time $T>0$ such that system \eqref{equa2}-\eqref{con-ini} admits a unique solution $(f,\textbf{u},\textbf{h})$ in $[0,T]$ satisfying
\begin{itemize}
\item[1.]$f\in L^\infty([0,T),H^{s+1}(\mathbb{T}^2))$,
\item[2.]$\textbf{u}^\pm,\textbf{h}^\pm\in L^\infty([0,T),H^{s}(\Omega^\pm_t))$,
\item[3.]$-(1-c_0)\le f\le (1-c_0)$.
\end{itemize}
\end{theorem}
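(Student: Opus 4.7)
The plan is first to flatten the free domains via a change of variables built from a (regularized) harmonic extension of $f$, turning $\Om^\pm_t$ into the fixed slabs $\bbT\times(0,1)$ and $\bbT\times(-1,0)$. On these fixed domains the unknowns $(\vupm,\vhpm,p^\pm,f)$ are coupled, and $p^\pm$ is recovered from an elliptic transmission problem with jump $[p]=\sigma H(f)$ obtained by taking the divergence of the momentum equation in \eqref{equa2}. Differentiating the kinematic identity $\pa_t f=\vupm\cdot\vN$ in time and substituting $\pa_t\vupm$ from the momentum equation yields a closed evolution equation for $f$ of the schematic form
\begin{align*}
\pa_t^2 f + \cR\bigl(\sigma H(f)\bigr) = \cN(f,\vu,\vh),
\end{align*}
where $\cR$ is a first-order, positive, self-adjoint operator assembled from the Dirichlet-to-Neumann operators on $\Om^+_t$ and $\Om^-_t$, and $\cN$ collects transport, convective and magnetic contributions of order at most two in $f$. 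Since $H(f)$ has principal part $-\Delta_{x'}$, surface tension contributes a dispersive third-order principal part $\sigma|\nabla_{x'}|^3$, the classical capillary water-wave structure.

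\textbf{Paralinearization and linearized energy estimates.}
The next step is to extract the principal symbol of the quasilinear operator on the left-hand side and derive an energy estimate for $f$ at the correct regularity. Following Alazard-Burq-Zuily, I would paralinearize both $H(f)$ and $\cR$ around a smooth reference state, obtaining a representation
\begin{align*}
\cR\bigl(\sigma H(f)\bigr) = T_{\lambda}\, f + \mathrm{remainder},\qquad \lambda(x',\xi)\sim \sigma\,\ell(x',\xi)\,|\xi|^2,
\end{align*}
where $\ell$ is the real, positive, order-one symbol of $\cR$ and the remainder is of order at most two. A symmetrization then recasts the $f$-equation as a paradifferential second-order-in-time equation with real, positive principal symbol of order three, on which standard $H^{s+1}(\bbT)$ energy estimates control $\|f\|_{H^{s+1}}$ by the data and lower-order norms of $(\vu,\vh)$. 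In parallel, the momentum equation and the evolution equation for $\vhpm$ yield $H^s(\Om^\pm_t)$ estimates for $(\vupm,\vhpm)$; the magnetic terms only contribute transport and lower-order perturbations to the $f$-equation, so no Syrovatskij-type hypothesis is needed here, consistent with the statement of the theorem.

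\textbf{Iteration, uniqueness, and preservation of geometric constraints.}
With the linear estimates in hand, local existence follows from a successive-approximation scheme: given $(f^{(n)},\vu^{(n)},\vh^{(n)})$, advance $\vh^{(n+1)}$ from the third equation of \eqref{equa2}, solve the elliptic transmission problem for $p^{(n+1)}$, update $\vu^{(n+1)}$ from the momentum equation, and finally update $f^{(n+1)}$ from the linear third-order paradifferential equation above. The uniform bounds persist on a time interval $[0,T]$ depending only on the initial norms and on $c_0$, which keeps $-(1-c_0)\le f\le 1-c_0$ throughout and prevents the two interfaces from touching $\Gamma^\pm$. Contraction in a lower-order norm together with interpolation against the high-order uniform bounds yields convergence of the iterates and uniqueness; the constraints $\dive\vupm=0$, $\dive\vhpm=0$ and $\vhpm\cdot\vN=0$ propagate automatically, as already noted after \eqref{con-ini}.

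\textbf{Main obstacle.}
The hard part is the third-order paralinearization of the coupled elliptic-interface system that produces $\cR\circ H$: one must verify that the nonlocal symbol obtained by composing two free-interface Dirichlet-to-Neumann operators admits a clean, real, positive principal part of order three, that the convective and magnetic pieces of $\cN$ do not destroy this positive structure, and that all commutators and remainders are of order at most two, hence absorbable by the surface-tension term after Gronwall. Once this structural computation is carried out, the surrounding energy-plus-iteration argument is by now standard.
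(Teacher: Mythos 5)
Your outline correctly identifies the central analytic mechanism: deriving a second-order-in-time evolution equation for $f$, extracting a third-order principal part $\sim \sigma\,T_\gamma T_\gamma$ via Alazard--Burq--Zuily paralinearization of the Dirichlet--Neumann operator and the mean curvature, symmetrizing, and then running an energy/iteration argument that needs no Syrovatskij condition because $\sigma>0$. This matches the paper's Section~3--4. However, there is a genuine gap in your iteration scheme, specifically in the step where you recover the bulk unknowns.

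You propose to advance $\vhpm$ by the induction equation, solve the elliptic transmission problem for $p^\pm$ with jump $[p]=\sigma H(f)$, and then update $\vupm$ by time-integrating the momentum equation. This does not close at the stated regularity. In the iteration space one controls $\sigma^{1/2}\|f\|_{H^{s+1}}+\|f\|_{H^{s+1/2}}$, so $H(f)\in H^{s-1}$ (or $H^{s-3/2}$ without the $\sigma^{1/2}$ weight), hence $[p]\in H^{s-1}$ at best; lifting this Dirichlet jump gives $p^\pm\in H^{s-1/2}(\Om^\pm_t)$ and $\nabla p^\pm\in H^{s-3/2}$, so time-integrating $\pa_t\vupm=-\vupm\cdot\nabla\vupm+\vhpm\cdot\nabla\vhpm-\nabla p^\pm$ only returns $\vupm\in H^{s-3/2}$, losing roughly $3/2$ derivatives relative to the target $H^s$. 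The paper avoids precisely this loss by never integrating the momentum equation directly for $\vu$: it instead evolves the curl pair $(\vom^\pm,\vxi^\pm)=(\curl\vupm,\curl\vhpm)$ by the transport-type system \eqref{eq:curl}, which contains no pressure at all and preserves $H^{s-1}$ regularity, together with the boundary averages $\mathfrak a_i^\pm,\mathfrak b_i^\pm$; it then reconstructs $\vupm\in H^s$, $\vhpm\in H^s$ from $(\vom^\pm,\vxi^\pm)\in H^{s-1}$ and the boundary datum $\theta=\pa_tf\in H^{s-1/2}$ by solving the div--curl system (Proposition~\ref{prop:div-curl}). That div--curl recovery is what gains back the derivative, and it is the structural ingredient your iteration is missing. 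Without it the scheme as written does not define a map on a fixed iteration space, so neither existence nor contraction can be concluded.

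A secondary, related omission is that $\theta=\vupm\cdot\vN_f$ is carried as a separate unknown (so the $(\bar f,\bar\theta)$ system~\eqref{system2} is genuinely first order in $\bar\theta$ and second order in $\bar f$, and, in the contraction argument, $\int\pa_t\bar f$ need not vanish); treating $\pa_t^2 f$ heuristically glosses over the compatibility and projection issues that the paper handles via $\mathcal{P}$ and the auxiliary quantities $\mathfrak a_i^\pm,\mathfrak b_i^\pm$. These are more bookkeeping than conceptual, but they are needed once the vorticity formulation is in place.
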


Before state the result of zero surface tension limit, we introduce a Syrovatskij type stability condition:
\begin{align}\label{con-sta}
\Lambda(\textbf{h}^{\pm},[\textbf{u}])\eqdefa&\inf_{x\in\Gamma_t}\inf_{\varphi_1^2+\varphi_2^2=1}\frac{1}{\rho^+ +\rho^-}(\underline{h_1^+}\varphi_1+\underline{h_2^+}\varphi_2)^2+
\frac{1}{\rho^+ +\rho^-}(\underline{h_1^-}\varphi_1+\underline{h_2^-}\varphi_2)^2\\
&-(v_1\varphi_1+v_2\varphi_2)^2\ge c_0>0,\nonumber
\end{align}
where $v_i=\frac{\sqrt{\rho^+\rho^-}}{\rho^++\rho^-}[u_i]$.

With such stability condition, Sun-Wang-Zhang\cite{SWZ1} prove the local well-posedness of current-vortex sheet problem without surface tension for the case $\rho^+=\rho^-=1$ and we \cite{LL} get the similar results for the general case $\rho^+,\rho^->0$.

Under the assumption that the initial data satisfies the stability condition \eqref{con-sta}, we prove that as $\sigma$ tends to 0, the solution of the two-phase flow problem got in \cite{SWZ1} is the limit of the solutions got in Theorem \ref{thm:1}. Indeed, we have the following result.
\begin{theorem}\label{theo2}
Assume $s\ge 6$ is an integer and $f_0\in H^{s+1}(\mathbb{T}^2)$,$\textbf{u}^{\pm}_0, \textbf{h}^{\pm}_0\in H^{s}(\Omega_0^\pm)$, $\sigma>0$, $\rho^+=\rho^-=1$, moreover we assume that there exists $c_0\in(0,\frac{1}{2})$ so that
\begin{itemize}
	\item[1.] $-(1-2c_0)\le f_0\le(1-2c_0)$,
	\item[2.] $\Lambda(\textbf{h}_0^{\pm},[\textbf{u}_0])\ge 2c_0$.
\end{itemize}
Then there exist $T>0$ independent of $\sigma$ such that system \eqref{equa2}-\eqref{con-ini} admits a unique solution $(f^\sigma,\textbf{u}^\sigma,\textbf{h}^\sigma)$ in $[0,T]$ satisfying
\begin{itemize}
\item[1.]$f^\sigma\in L^\infty([0,T),H^{s+1}(\mathbb{T}^2))$,
\item[2.]$\textbf{u}^{\sigma\pm},\textbf{h}^{\sigma\pm}\in L^\infty([0,T),H^{s}(\Omega^\pm_t))$,
\item[3.]$-(1-c_0)\le f^\sigma\le (1-c_0)$,
\item[4.]$\Lambda(\textbf{h}^{\pm},[\textbf{u}])\ge c_0$.
\end{itemize}
Moreover, as $\sigma$ tends to 0, the solution $(f^\sigma,\textbf{u}^\sigma,\textbf{h}^\sigma)$ converges to the solution $(f,\textbf{u},\textbf{h})$ of the system \eqref{equa2}-\eqref{con-ini} with $\sigma=0$.
\end{theorem}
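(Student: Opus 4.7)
The plan is to first establish a priori estimates for the system \eqref{equa2}--\eqref{con-ini} that are uniform in $\sigma\in(0,1]$ by combining the energy structure underlying Theorem~\ref{thm:1} with the Syrovatskij-based energy of Sun-Wang-Zhang~\cite{SWZ1}, and then to pass to the limit $\sigma\to 0$.

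Step 1 (uniform-in-$\sigma$ estimates). The energy used to prove Theorem~\ref{thm:1} controls $\sqrt{\sigma}\|f\|_{H^{s+1}}$ together with $\|\vu^\pm,\vh^\pm\|_{H^s(\Om_t^\pm)}$, but its growth rate involves powers of $\|f\|_{H^{s+1}}$ and therefore degenerates as $\sigma\to 0$. Under the stability hypothesis $\La(\vh_0^\pm,[\vu_0])\ge 2c_0$, I would introduce the SWZ functional $E_{\mathrm{SWZ}}(t)$ from \cite{SWZ1} which, via the symmetric hyperbolic symmetrization of the evolution of $f$ enabled by the Syrovatskij condition, coercively controls $f$ in $H^{s+1/2}(\bbT^2)$ and $\vu^\pm,\vh^\pm$ in $H^s(\Om_t^\pm)$ with constants depending only on $c_0$. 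Define the total energy
\begin{equation*}
E(t)\eqdefa E_{\mathrm{SWZ}}(t)+\sigma\|f(t)\|_{H^{s+1}(\bbT^2)}^2.
\end{equation*}
The goal is to show $\frac{d}{dt}E(t)\le C\bigl(E(t)\bigr)$ with $C$ independent of $\sigma$. For this, I would revisit the energy identity of Theorem~\ref{thm:1}: the third-order surface tension contribution to $\pa_t^2 f$ is non-negative by the paraproduct/strict-hyperbolicity argument, so it can be kept on the left-hand side, while the cross terms between the surface tension symbol $\sigma H(f)$ and the SWZ symmetrizer must be shown to either cancel or be bounded by $\sigma^{1/2}\|f\|_{H^{s+1}}\cdot\|f\|_{H^{s+1/2}}\le E(t)$, with no factor of $\sigma^{-1}$ appearing.

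Step 2 (persistence and uniform time). Assuming Step~1, a standard continuation argument gives a time $T>0$, depending only on the initial data and $c_0$ but not on $\sigma$, on which $\La(\vh^\pm,[\vu])\ge c_0$, $-(1-c_0)\le f\le 1-c_0$, and the bounds in items~1--2 hold. Existence and uniqueness themselves follow from Theorem~\ref{thm:1}; the novelty here is only the $\sigma$-uniform lifespan and uniform bounds.

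Step 3 (zero surface tension limit). With $(f^\sigma,\vu^\sigma,\vh^\sigma)$ bounded uniformly in $L^\infty([0,T];H^{s+1/2}\times H^s\times H^s)$ and $\sqrt{\sigma}\|f^\sigma\|_{L^\infty_T H^{s+1}}$ bounded, the equations give uniform control of time derivatives in lower-order spaces, so the Aubin--Lions lemma yields strong convergence along a subsequence $\sigma_n\to 0$ in, say, $C([0,T];H^{s-1/2}\times H^{s-1}\times H^{s-1})$, together with weak-$*$ convergence in the native spaces, to some limit $(f^\star,\vu^\star,\vh^\star)$. The only delicate passage to the limit is in the boundary condition \eqref{equa:p}: since $\sigma H(f^\sigma)$ is bounded in $L^\infty_T H^{s-1}$ by $\sigma\|f^\sigma\|_{H^{s+1}}\le \sigma^{1/2}\sqrt{E(t)}\to 0$, one obtains $[p^\star]=0$ across $\Gamma_{t}^\star$ in the limit. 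Hence $(f^\star,\vu^\star,\vh^\star)$ solves the $\sigma=0$ two-phase flow with the same initial data, and by uniqueness of that system under the Syrovatskij condition (the main result of~\cite{SWZ1}) it coincides with $(f,\vu,\vh)$. As the limit is unique, the whole family converges, not just a subsequence.

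The main obstacle is Step~1: designing an energy whose time derivative is controlled independently of $\sigma$. The surface tension contributes a strictly hyperbolic third-order symbol to the evolution of $f$, while the SWZ estimate relies on a first-order symmetric hyperbolic structure produced by $\La\ge c_0$. The subtle point is that standard commutators of $\pa^s$ with the symmetrizer and the Dirichlet-to-Neumann-type operators appearing in the pressure estimate, which in the proof of Theorem~\ref{thm:1} are absorbed using $\sigma\|f\|_{H^{s+1}}^2$, must here be absorbed without that term; one must check that the paraproduct decomposition isolates the good third-order piece cleanly and that all remainders fall into $H^{s+1/2}$ with constants independent of $\sigma$. Once this uniform estimate is in hand, Steps~2 and~3 are comparatively routine.
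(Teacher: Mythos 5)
Your proposal matches the paper's strategy: obtain a $\sigma$-uniform a priori estimate by noticing that under $\La\ge c_0$ the hyperbolic part of the energy becomes coercive without the $C(\sigma)\|\bar f\|_{L^2}^2$ correction used in Proposition~\ref{pro-f}, then extend the lifespan by a continuation argument and pass to the weak-$*$ limit in the pullback to the reference domain. One small simplification over what you anticipate: the paper does not graft a separate SWZ energy onto the surface-tension energy, it reuses the \emph{same} paradifferential symmetrizer $T_\gamma T_\beta T_q$ for both, so $\sigma G_1^\sigma+G_2^\sigma$ is a single coherent quantity and the ``cross terms between the surface tension symbol and the SWZ symmetrizer'' you flag as the main obstacle simply do not appear; moreover the only place $\|f\|_{H^{s+1}}$ (as opposed to $\|f\|_{H^{s+1/2}}$) enters the forcing estimate of Lemma~\ref{lem-g} is through $\sigma\|R\|_{H^{s-1/2}}\lesssim \sigma\|f\|_{H^{s+1}}\le \sigma^{1/2}L_1$, so it carries its own factor of $\sigma$ and never needs to be absorbed by the energy.
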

\begin{remark}
Our method is also applicable to the general case $\rho^+,\rho^->0$. In this case, the surface tension term is a little more complex, however the evolution equation of the free surface is also strictly hyperbolic. As surface tension goes to 0, the limit of solutions to this problem is the solution got in \cite{LL}. For the one fluid problem that there is no fluid and no magnetic in the upper domain, we can also prove local well-posedness by using the method developed herein. The key steps to prove these results can be found in Section 7.
\end{remark}

The rest of this paper is organized as follows. In Section 2, we will introduce the reference domain, harmonic coordinate, and the Dirichlet-Neumann operator. In Section 3, we reformulate the system into a new formulation. Section 4 provides the uniform estimates for the linearized system. In Section 5 and Section 6, we construct an iteration map and prove the existence and uniqueness of the solution. Section 7 shows that the approach developed in this paper can be applied to some other cases.
\section{Reference domain, harmonic coordinate and Dirichlet-Neumann Operator}
In this section, we recall some fundamental lemmas on the harmonic coordinate and Dirichlet-Neumann operators.

We first introduce some notations used throughout this paper. We denote by $C(\cdot,\cdot)$ a positive constant or a positive nondecreasing function depending only on its variables which may be different from line to line. We use $x=(x_1,x_2,x_3)$ to denote the coordinates in the fluid region, and use $x'=(x_1,x_2)$ to denote the natural coordinates on the interface or on the top/bottom boundary $\Gamma^\pm$. In addition, we will use the Einstein summation notation where a summation from 1 to 2 is implied over repeated index (i.e. $a_ib_i = a_1b_1 + a_2b_2$).

For a function $g:\Omega_f^\pm\to\mathbb R$, we denote $\nabla g=(\pa_1g,\pa_2g,\pa_3g)$, and for a function $\eta:\bbT^2\to \mathbb R$, $\nabla\eta=(\pa_1\eta,\pa_2\eta)$. For a function $g:\Omega_f^\pm\to\mathbb R$, we can define its trace on $\Gamma_f$, which are denoted by $\underline g(x')$. Thus, for $i=1,2$,
\begin{align*}
  \pa_i\underline g(x')=\pa_i g(x',f(x'))+\pa_3g(x',f(x'))\pa_if(x').
\end{align*}
We denote by $||\cdot||_{H^s(\Omega_f^\pm)}$, $||\cdot||_{H^s}$ the Sobolev norm on $\Omega_f^\pm$ and $\bbT^2$ respectively. Moreover, for operator $P$ defined on $H^s(\bbT^2)$, we denote its operator norm by
\begin{align*}
	\|P\|_{H^s\rightarrow H^k}=\sup_{||f||_{H^s}\le1}||Pf||_{H^k}.
\end{align*}

To solve the free boundary problem, we introduce a fixed reference domain. Let $\Gamma_*$ be a fixed graph given by
\begin{equation*}
\Gamma_*=\{(y_1,y_2,y_3):y_3=f_*(y_1,y_2)\},
\end{equation*}
where $f_*$ satisfies $\int_{\bbT^2}f_*(y')dy'=0$. The reference domain is given by
\begin{equation*}
\Omega_*=\mathbb{T}^2\times(-1,1), \quad \Omega_*^\pm=\{y\in\Omega_*|y_3\gtrless f_*(y_1,y_2),y'\in\bbT^2\}.
\end{equation*}
We will look for a free boundary that lies close to the reference domain. For this purpose, we define
\begin{equation*}
\Upsilon(\delta,k):=\{f\in H^k(\mathbb{T}^2):\|f-f_*\|_{H^k(\mathbb{T}^2)}\le\delta\}.
\end{equation*}
For $f\in\Upsilon(\delta,k)$, we define $\Gamma_f, \Omega_f^+, \Omega_f^-$ by
\begin{equation*}
\Gamma_f:=\{x\in\Omega_t|x_3=f(t,x'),x'\in\bbT^2\},\quad\Omega_f^\pm=\{x\in\Omega_t|x_3\gtrless f(t,x'),x'\in\bbT^2\}.
\end{equation*}
We denote by $\mathbf{N}_f:=(-\partial_1 f,-\partial_2 f,1)$ the outward normal vector of $\Omega_f^-$ on $\Gamma_f$, and $\mathbf{n}_f:=\mathbf{N}_f/\sqrt{1+|\nabla f|^2}$.
Then we need to introduce the harmonic coordinate. For given $f\in\Upsilon(\delta,k)$, we define a map $\Phi_f^\pm:\Omega_*^\pm\rightarrow\Omega_f^\pm$ by the harmonic extension:
\begin{equation}
\left\{\begin{aligned}
&\Delta_y\Phi_f^\pm=0 \quad \quad &y\in \Omega_*^\pm,\\
&\Phi_f^\pm(y',f_*(y'))=(y',f(y')) \quad \quad &y'\in \mathbb{T}^2,\\
&\Phi_f^\pm(y',\pm 1)=(y',\pm 1) \quad \quad &y'\in \mathbb{T}^2.
\end{aligned}
\right.
\end{equation}
For each $\Gamma_*$, there exists $\delta_0=\delta_0(\|f_*\|_{W^{1,\infty}})>0$ so that $\Phi_f^\pm$ is a bijection whenever $\delta\le\delta_0$. Then, there exists an inverse map $\Phi_f^{\pm -1}:\Omega_f^\pm\rightarrow\Omega_*^\pm$ such that
\begin{equation*}
\Phi_f^{\pm -1}\circ\Phi_f^\pm=\Phi_f^\pm\circ\Phi_f^{\pm -1}=Id.
\end{equation*}
We list some properties of the harmonic coordinate (see \cite{SWZ1} for example):
\begin{lemma}
Let $f\in\Upsilon(\delta_0,s-\frac{1}{2})$ for $s\ge 3$. Then there exists a constant C depending only on $\delta_0$ and $\|f_*\|_{H^{s-\frac{1}{2}}}$ so that\\
1. If $u\in H^\sigma(\Omega^\pm_f)$ for $\sigma\in[0,s]$, then
\begin{equation*}
\|u\circ\Phi_f^\pm\|_{H^\sigma(\Omega_*^\pm)}\le C\|u\|_{H^\sigma(\Omega_f^\pm)}.
\end{equation*}
2. If $u\in H^\sigma(\Omega^\pm_*)$ for $\sigma\in[0,s]$, then
\begin{equation*}
\|u\circ\Phi_f^{\pm -1}\|_{H^\sigma(\Omega_f^{\pm })}\le C\|u\|_{H^\sigma(\Omega_*^\pm)}.
\end{equation*}
3. If $u,v\in H^\sigma(\Omega^\pm_*)$ for $\sigma\in[2,s]$, then
\begin{equation*}
\|uv\|_{H^\sigma(\Omega_f^{\pm})}\le C\|u\|_{H^\sigma(\Omega_f^\pm)}\|v\|_{H^\sigma(\Omega_f^\pm)}.
\end{equation*}
\end{lemma}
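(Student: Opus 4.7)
The plan is to prove the three bounds in sequence, using the regularity of the harmonic coordinate map $\Phi_f^\pm$ as the common input. First I would establish that $\Phi_f^\pm - \mathrm{id}$ lies in $H^s(\Omega_*^\pm)$: this difference solves $\Delta\Phi = 0$ in $\Omega_*^\pm$ with boundary data $(0,0,f-f_*)$ on $\Gamma_*$ and $0$ on $\Gamma^\pm$. After flattening the reference domain by the fixed diffeomorphism $(y',\tau)\mapsto(y',\tau+f_*(y'))$ so that $\Gamma_*$ becomes $\{\tau=0\}$, standard elliptic regularity gives
\begin{equation*}
\|\Phi_f^\pm - \mathrm{id}\|_{H^s(\Omega_*^\pm)} \le C\bigl(\|f_*\|_{H^{s-1/2}}\bigr)\,\|f-f_*\|_{H^{s-1/2}} \le C\delta_0,
\end{equation*}
and the Sobolev embedding $H^{s-1/2}\hookrightarrow W^{1,\infty}$ (valid on $\mathbb{T}^2$ for $s\ge 3$) ensures that, for $\delta_0$ small, both $D\Phi_f^\pm$ and its pointwise inverse are uniformly bounded, so the Jacobian admits a two-sided bound depending only on $\delta_0$ and $\|f_*\|_{H^{s-1/2}}$.

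For estimate (1) I would proceed by induction on integer $\sigma\in[0,s]$. The case $\sigma=0$ is immediate from the Jacobian bound together with the change of variables formula. For integer $\sigma\ge 1$ the Fa\`a di Bruno chain rule expands $D^\sigma(u\circ\Phi_f^\pm)$ as a finite sum of products of the form $(D^k u)\circ\Phi_f^\pm$ times derivatives of $\Phi_f^\pm$ of total order $\sigma$. Each such term is controlled by the tame Moser inequality
\begin{equation*}
\|fg\|_{H^\sigma(\Omega_*^\pm)} \le C\bigl(\|f\|_{L^\infty}\|g\|_{H^\sigma} + \|f\|_{H^\sigma}\|g\|_{L^\infty}\bigr),
\end{equation*}
by placing the top-order derivative either on $u\circ\Phi_f^\pm$ (bounded by $\|u\|_{H^\sigma(\Omega_f^\pm)}$ after the lower-order form of (1) and the Jacobian change of variables) or on $\Phi_f^\pm$ (bounded by $1+C\delta_0$), with all remaining factors absorbed through $H^{s-1}\hookrightarrow L^\infty$. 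Non-integer $\sigma$ then follows by real interpolation between the integer endpoints. Estimate (2) is obtained the same way once one observes that $\Phi_f^{\pm-1}-\mathrm{id}\in H^s(\Omega_f^\pm)$, which follows either from the inverse function theorem applied iteratively in Sobolev scales (starting from $D\Phi_f^{\pm-1}=(D\Phi_f^\pm)^{-1}\circ\Phi_f^{\pm-1}$) or by rerunning the harmonic extension argument on $\Omega_f^\pm$.

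Estimate (3) is the Sobolev algebra property: in three space dimensions and for $\sigma\ge 2>3/2$, one has $\|uv\|_{H^\sigma(\Omega)}\le C\|u\|_{H^\sigma(\Omega)}\|v\|_{H^\sigma(\Omega)}$ on any bounded Lipschitz domain, and transferring between $\Omega_*^\pm$ and $\Omega_f^\pm$ is handled by parts (1) and (2). The main obstacle in the whole argument is the tame chain-rule step: one must organize the Moser splitting so that at every occurrence the factor carrying the top Sobolev derivative is controlled by either $\|u\|_{H^\sigma}$ or the already-bounded $\|\Phi_f^\pm\|_{H^s}$, while every remaining factor sits in $L^\infty$ via the fixed Sobolev embedding. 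Once that bookkeeping is carried out, the interpolation, the inversion step for (2), and the algebra argument for (3) are all routine.
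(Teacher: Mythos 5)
The paper itself does not prove this lemma --- it cites \cite{SWZ1} --- so your argument is a self-contained reconstruction, and the overall strategy (elliptic estimate for $\Phi_f^\pm-\mathrm{id}$, Jacobian control via Sobolev embedding, induction plus Moser product estimates for (1), inversion for (2), algebra property plus (1)--(2) for (3)) is the standard route and correct in outline. Two steps, however, are not right as written.

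First, in the Fa\`a di Bruno step your claim that ``all remaining factors [are] absorbed through $H^{s-1}\hookrightarrow L^\infty$'' fails when $s$ is near the lower endpoint $3$: the expansion of $D^\sigma(u\circ\Phi_f^\pm)$ contains products of the form $(D^k u)\circ\Phi_f^\pm\cdot D^{\alpha_1}\Phi_f^\pm\cdots D^{\alpha_k}\Phi_f^\pm$ with $\alpha_1+\cdots+\alpha_k=\sigma$ in which several factors carry intermediate derivatives simultaneously (e.g.\ $(D^2u)\circ\Phi\cdot(D^2\Phi)^2$ when $\sigma=4$), and $D^{\alpha_i}\Phi_f^\pm\in H^{s-\alpha_i}(\Omega_*^\pm)$ does not embed into $L^\infty$ once $\alpha_i\ge 2$ and $s\le \alpha_i+3/2$. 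The standard repair is a Gagliardo--Nirenberg balance $\|D^j g\|_{L^{2\sigma/j}}\lesssim\|g\|_{H^\sigma}^{j/\sigma}\|g\|_{L^\infty}^{1-j/\sigma}$ on each factor so the H\"older exponents sum to $1/2$; alternatively, one proves the tame chain rule one derivative at a time via $D(u\circ\Phi)=\bigl((Du)\circ\Phi\bigr)\cdot D\Phi$, the bilinear Moser estimate, and the inductive hypothesis, which avoids multi-factor products altogether. Second, your proposed alternative for (2), ``rerunning the harmonic extension argument on $\Omega_f^\pm$,'' does not produce $\Phi_f^{\pm-1}$: the inverse of a harmonic map is in general not harmonic, so this constructs a different map $\Psi:\Omega_f^\pm\to\Omega_*^\pm$ than the one in the statement. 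Your primary route via $D\Phi_f^{\pm-1}=(D\Phi_f^\pm)^{-1}\circ\Phi_f^{\pm-1}$, Cramer's rule in the Sobolev algebra, and an induction in $\sigma$ is the one that works; drop the harmonic-extension alternative. (Also, the Jacobian control should invoke the three-dimensional embedding $H^{s}(\Omega_*^\pm)\hookrightarrow W^{1,\infty}(\Omega_*^\pm)$ rather than the two-dimensional one on $\mathbb{T}^2$, though this is cosmetic since $s\ge 3>5/2$.)
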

Now we introduce the Dirichlet-Neumann operator which maps the Dirichlet boundary value of a harmonic function to its Neumann boundary value. For any $g(x')\in H^k(\bbT^2)$, we denote by $\mathcal{H}^\pm_f g$ the harmonic extension from $\Gamma_f$ to $\Omega_f^\pm$:
\begin{equation}
\left\{\begin{aligned}
&\Delta\mathcal{H}^\pm_f g=0 \quad \quad &x\in \Omega_*^\pm,\\
&(\mathcal{H}^\pm_f g)(x',f(x'))=g(x') \quad \quad &x'\in \mathbb{T}^2,\\
&\partial_3\mathcal{H}^\pm_f g(x',\pm 1)=0 \quad \quad &x'\in \mathbb{T}^2.
\end{aligned}
\right.
\end{equation}
Then we define the Dirichlet-Neumann operator:
\begin{equation*}
\mathcal{N}^\pm_f g \overset{\text{def}}{=}\mp\textbf{N}_f\cdot(\nabla\mathcal{H}^\pm_f g)\mid_{\Gamma_f}.
\end{equation*}
We will use the following properties from \cite{ABZ,SWZ1}.
\begin{lemma}
It holds that\\
1. $\mathcal{N}^\pm_f$ is a self-adjoint operator:
\begin{equation*}
(\mathcal{N}^\pm_f\psi,\phi)=(\psi,\mathcal{N}^\pm_f\phi), \quad \forall \phi,\psi \in H^{\frac{1}{2}}(\mathbb{T}^2);
\end{equation*}
2. $\mathcal{N}^\pm_f$ is a positive operator:
\begin{equation*}
(\mathcal{N}^\pm_f\phi,\phi)=\|\nabla\mathcal{H}^\pm_f\phi\|^2_{L^2(\Omega_f^\pm)}\ge 0, \quad  \forall\phi\in H^{\frac{1}{2}}(\mathbb{T}^2);
\end{equation*}
Especially, if $\int_{\mathbb{T}^2}\phi(x')dx'=0$, there exists $c>0$ depending on $c_0$, $\|f\|_{W^{1,\infty}}$ such that
\begin{equation*}
(\mathcal{N}^\pm_f\phi,\phi)\ge c\|\mathcal{H}^\pm_f\phi\|^2_{H^1(\Omega_f^\pm)}\ge c\|\phi\|_{H^{\frac{1}{2}}}^2, \quad  \forall\phi\in H^{\frac{1}{2}}(\mathbb{T}^2).
\end{equation*}
3. $\mathcal{N}^\pm_f$ is a bijection from $H^{k+1}_0(\mathbb{T}^2)$ to $H^{k}_0(\mathbb{T}^2)$ for $k\ge 0$, where
\begin{equation*}
H^{k}_0(\mathbb{T}^2):=H^{k}(\mathbb{T}^2)\bigcap\{\phi\in L^2(\mathbb{T}^2):\int_{\mathbb{T}^2}\phi(x')dx'=0\}.
\end{equation*}
\end{lemma}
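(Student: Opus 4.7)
The plan is to derive each of the three properties from Green's identity applied to the harmonic extension $u = \mathcal{H}^\pm_f \phi$ (and $v = \mathcal{H}^\pm_f \psi$ when relevant), combined with classical elliptic regularity and trace estimates on the slab-like domain $\Omega_f^\pm$.

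For property (1), I would set $u = \mathcal{H}^\pm_f \phi$, $v = \mathcal{H}^\pm_f \psi$ and, using $\Delta u = \Delta v = 0$ together with the boundary conditions in the definition of $\mathcal{H}^\pm_f$, write
\begin{align*}
\int_{\Omega_f^\pm}\nabla u\cdot\nabla v\,dx = \int_{\partial\Omega_f^\pm} v\,\partial_\nu u\,dS.
\end{align*}
The boundary of $\Omega_f^\pm$ splits into $\Gamma_f$ and the artificial piece $\Gamma^\pm$. On $\Gamma^\pm$, the Neumann condition $\partial_3 u=0$ kills the contribution. On $\Gamma_f$, the outward unit normal of $\Omega_f^\pm$ is $\mp \mathbf{n}_f$, and $\mathbf{n}_f\,dS = \mathbf{N}_f\,dx'$, so the remaining boundary integral equals $\int_{\mathbb{T}^2}\psi\,(\mp\mathbf{N}_f\cdot\nabla u)\,dx' = (\psi,\mathcal{N}^\pm_f\phi)$. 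The left-hand side is manifestly symmetric in $(\phi,\psi)$, yielding self-adjointness.

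For property (2), taking $\psi = \phi$ in the identity above yields $(\mathcal{N}^\pm_f\phi,\phi)=\|\nabla\mathcal{H}^\pm_f\phi\|_{L^2(\Omega_f^\pm)}^2$. For the coercive bound when $\int_{\mathbb{T}^2}\phi\,dx'=0$, I would pull back to the reference domain via $\Phi_f^\pm$ using Lemma~2.1 (the bi-Lipschitz equivalence valid for $\delta\le\delta_0$), reducing to a Poincaré-type inequality on $\Omega_*^\pm$: since $\phi$ has zero mean on $\mathbb{T}^2$ and $u|_{\Gamma^\pm}$ has vanishing normal derivative, a standard Poincaré-Wirtinger argument on the slab gives $\|u\|_{H^1(\Omega_f^\pm)}\le C\|\nabla u\|_{L^2(\Omega_f^\pm)}$ with constant depending only on $c_0$ and $\|f\|_{W^{1,\infty}}$. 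The trace inequality then yields $\|\phi\|_{H^{1/2}}^2\le C\|u\|_{H^1}^2$.

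For property (3), I would first verify that the range lies in $H^k_0$: by the divergence theorem and $\Delta u=0$ together with $\partial_3 u|_{\Gamma^\pm}=0$,
\begin{align*}
\int_{\mathbb{T}^2}\mathcal{N}^\pm_f\phi\,dx' = \mp\int_{\Gamma_f}\mathbf{N}_f\cdot\nabla u\,dx' = -\int_{\Omega_f^\pm}\Delta u\,dx = 0.
\end{align*}
The regularity $H^{k+1}\to H^k$ is classical elliptic regularity for the harmonic extension on a smooth slab domain. Injectivity on $H^{k+1}_0$ follows from (2): $\mathcal{N}^\pm_f\phi=0$ gives $\|\nabla u\|_{L^2}=0$, hence $u$ and $\phi$ are constants, and the mean-zero constraint forces $\phi=0$. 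Surjectivity is obtained by Lax--Milgram applied to the symmetric coercive bilinear form on $H^1_0(\mathbb{T}^2)$ furnished by the identity $(\mathcal{N}^\pm_f\phi,\psi)=\int\nabla u\cdot\nabla v\,dx$, giving inversion $H^{-1}_0\to H^1_0$, followed by an elliptic bootstrap to reach $H^k_0\to H^{k+1}_0$.

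The main obstacle is tracking the uniformity of the coercivity constant in (2) with respect to the geometry. The key is that Lemma~2.1 provides bi-Lipschitz control of $\Phi_f^\pm$ depending only on $\delta_0$ and $\|f_*\|_{H^{s-1/2}}$, so the Poincaré constant on $\Omega_f^\pm$ inherits a uniform bound. Once this is in hand, the rest is routine manipulation via Green's identity and Lax--Milgram.
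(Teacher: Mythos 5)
The paper does not prove this lemma; it is quoted from \cite{ABZ,SWZ1} without proof. Your Green-identity argument for (1) and (2), the mean-zero observation via the divergence theorem, and the Lax--Milgram-plus-bootstrap argument for (3) are exactly the standard route those references take, and the bookkeeping of signs ($\mathbf{n}_f\,dS=\mathbf{N}_f\,dx'$, outward normal of $\Omega_f^\pm$ on $\Gamma_f$ equals $\mp\mathbf{n}_f$) is correct. One small slip: the base-case Lax--Milgram inversion should be stated as $H^{-1/2}_0(\mathbb{T}^2)\to H^{1/2}_0(\mathbb{T}^2)$, not $H^{-1}_0\to H^1_0$, since the coercive form $(\mathcal{N}^\pm_f\phi,\phi)=\|\nabla\mathcal{H}^\pm_f\phi\|_{L^2}^2$ is equivalent to the $H^{1/2}$ norm on mean-zero traces; the claimed range $H^{k+1}_0\to H^k_0$ for $k\ge 0$ is then recovered by the elliptic bootstrap you describe. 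Also worth noting: $\int_{\mathbb{T}^2}\phi\,dx'=0$ is not literally $\int_{\Gamma_f}u\,dS=0$ because $dS=\sqrt{1+|\nabla f|^2}\,dx'$, but the Poincar\'e constant still only depends on $c_0$ and $\|f\|_{W^{1,\infty}}$, so the coercivity estimate survives with the stated dependence.
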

\section{Reformulation of the problem}
In this section, we derive a new system that is equivalent to the original system \eqref{equa2}-\eqref{bc-uh}. The new system consists of the following quantities:
\begin{itemize}
\item The height function of the interface: $f$;
\item The scaled normal velocity on the interface: $\theta=\textbf{u}^{\pm}\cdot  {\bf{N}}_{f}$;
\item The vorticity and current in the fluid region: $\vom=\nabla\times\textbf{u}, \vxi=\nabla\times\textbf{h}$;
\item The average of the tangential part of the velocity and the magnetic field on the top and bottom fixed boundary:
\end{itemize}
\begin{equation*}
\mathfrak{a}_i^\pm(t)=\int_{\mathbb{T}^2}u_i^\pm(t,x',\pm 1)dx',\quad \mathfrak{b}_i^\pm(t)=\int_{\mathbb{T}^2}h_i^\pm(t,x',\pm 1)dx'(i=1,2).
\end{equation*}
\subsection{Evolution of the Scaled Normal Velocity}
Let
\begin{equation}
\theta(t,x')\overset{\text{def}}{=}\textbf{u}^{\pm}(t,x',f(t,x'))\cdot {\bf{N}}_{f}(t,x'),
\end{equation}
we have
\begin{equation}
\partial_t f(t,x')=\theta(t,x').
\end{equation}
In this subsection, we will derive the evolution equation of $\theta$. To this end, we need the following  elementary lemma, which can be proved by direct calculation.
\begin{lemma}\label{lem-cal}\cite{SWZ1}
For $\mathbf{u}=\mathbf{u}^\pm,\mathbf{h}^\pm$, we have
\begin{equation}
(\textbf{u}\cdot\nabla\textbf{u})\cdot{\bf{N}}_{f}-\partial_3 u_j N_j({\bf{u}}\cdot{{\bf{N}}}_{f} )|_{x_3=f(t,x')}=
\underline{u}_1\partial_1(\underline{u}_j N_j)+\underline{u}_2\partial_2(\underline{u}_j N_j)+\sum_{i,j=1,2}\underline{u}_i\underline{u}_j
\partial_i\partial_j f.
\end{equation}
\end{lemma}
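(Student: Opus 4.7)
The statement is an algebraic identity, so my plan is to expand both sides and match them using the explicit form of $\mathbf{N}_f=(-\partial_1f,-\partial_2f,1)$ together with the trace chain rule $\partial_k\underline{u}_j=\partial_ku_j|_{x_3=f}+\partial_kf\,\partial_3u_j|_{x_3=f}$ for $k=1,2$. Both sides will reduce to the common expression $\sum_{k=1,2}\underline{u}_k N_j\,\partial_k\underline{u}_j$, and the only content is bookkeeping between the two sources of extra $\partial f$ factors.

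For the left-hand side, I would split the convection term by isolating the vertical index,
\begin{align*}
(\textbf{u}\cdot\nabla\textbf{u})\cdot\mathbf{N}_f=\sum_{k=1,2}u_k\partial_ku_jN_j+u_3\,\partial_3u_jN_j,
\end{align*}
and write $\textbf{u}\cdot\mathbf{N}_f=\sum_{m=1,2}u_mN_m+u_3$ so that the subtracted term $\partial_3u_jN_j(\textbf{u}\cdot\mathbf{N}_f)$ exactly cancels the $u_3\partial_3u_jN_j$ piece. What remains, evaluated at $x_3=f$, is
\begin{align*}
\sum_{k=1,2}\underline{u}_k\,\partial_ku_j|_{x_3=f}\,N_j\;-\;\partial_3u_j|_{x_3=f}N_j\sum_{m=1,2}\underline{u}_mN_m.
\end{align*}
Using $N_m=-\partial_m f$ for $m=1,2$ converts the second sum into $+\sum_{m=1,2}\underline{u}_m\partial_mf\cdot\partial_3u_j|_{x_3=f}N_j$, and combining this with the first sum and invoking the trace chain rule yields $\sum_{k=1,2}\underline{u}_kN_j\,\partial_k\underline{u}_j$.

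For the right-hand side, I would apply Leibniz to get $\underline{u}_i\partial_i(\underline{u}_jN_j)=\underline{u}_iN_j\,\partial_i\underline{u}_j+\underline{u}_i\underline{u}_j\,\partial_iN_j$, and then use that $\partial_iN_j=-\partial_i\partial_jf$ for $j=1,2$ while $\partial_iN_3=0$. The second piece becomes $-\sum_{i,l=1,2}\underline{u}_i\underline{u}_l\,\partial_i\partial_lf$, which cancels exactly the second-derivative term explicitly written in the statement. The surviving $\sum_{i=1,2}\underline{u}_iN_j\,\partial_i\underline{u}_j$ coincides with the reduced left-hand side, completing the identity. There is no real obstacle: the only subtlety is that the $\partial f$ generated by differentiating $\mathbf{N}_f$ on the right matches the $\partial f$ generated by the trace chain rule on the left, after the cancellation of the vertical-index terms described above.
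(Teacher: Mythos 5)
Your proof is correct, and it is exactly the ``direct calculation'' the paper (and [SWZ1]) intends: split off the vertical convection index, observe that $u_3\partial_3u_jN_j$ cancels against the corresponding piece of the subtracted term, trade $N_m=-\partial_mf$ for $m=1,2$, and use the trace chain rule $\partial_k\underline{u}_j=\partial_ku_j|_{x_3=f}+\partial_kf\,\partial_3u_j|_{x_3=f}$ to assemble $\sum_{k=1,2}\underline{u}_kN_j\partial_k\underline{u}_j$ on the left, while on the right the Leibniz differentiation of $N_j$ produces $-\sum_{i,j=1,2}\underline{u}_i\underline{u}_j\partial_i\partial_jf$ which cancels the explicit curvature term. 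One minor point worth being aware of: despite the paper's stated convention that repeated indices run only over $1,2$, the dummy index $j$ in $\underline{u}_jN_j$ and $\partial_3u_jN_j$ must run over $1,2,3$ for the identity (and its use in deriving $\partial_t\theta$) to be correct, which is precisely the reading you adopted.
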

With the help of Lemma \ref{lem-cal}, we deduce from \eqref{equa2} that
\begin{align*}
	\partial_t \theta&=(\partial_t {\bf{u}}^{+}+\partial_3 {\bf{u}}^{+}\partial_t f)\cdot{{\bf{N}}}_{f}+ {\bf{u}}^{+}\cdot\partial_t{{\bf{N}}}_f|_{x_3=f(t,x')}\\
=&(-\textbf{u}^{+}\cdot\nabla\textbf{u}^{+}+\frac{1}{\rho^{+}}\textbf{h}^{+}\cdot\nabla\textbf{h}^{+}-\frac{1}{\rho^{+}}\nabla p^+
+\partial_3\textbf{u}^{+}\partial_t f)\cdot\textbf{N}_f-\textbf{u}^{+}\cdot(\partial_1\partial_t f,\partial_2\partial_t f,0)|_{x_3=f(t,x')}\\
=&((-\textbf{u}^{+}\cdot\nabla)\textbf{u}^{+}+\partial_3\textbf{u}^{+}(\textbf{u}^{+}\cdot\textbf{N}_f))\cdot\textbf{N}_f+
\frac{1}{\rho^{+}}(\textbf{h}^{+}\cdot\nabla)\textbf{h}^{+}\cdot\textbf{N}_f\\
&-\frac{1}{\rho^{+}}\textbf{N}_f\cdot\nabla p^+-
\textbf{u}^{+}\cdot(\partial_1 \theta,\partial_2 \theta,0)|_{x_3=f(t,x')}\\
=&-2(\underline{u}_1^+\partial_1 \theta+\underline{u}_2^+\partial_2 \theta)-\frac{1}{\rho^{+}}\textbf{N}_f\cdot\underline{\nabla p}^+
-\sum_{i,j=1,2}\underline{u}_i^+\underline{u}_j^+\partial_i\partial_j f+\frac{1}{\rho^{+}}\sum_{i,j=1,2}\underline{h}_i^+\underline{h}_j^+\partial_i\partial_j f,
\end{align*}
and similarly,
\begin{equation*}
\partial_t \theta=-2(\underline{u}_1^-\partial_1 \theta+\underline{u}_2^-\partial_2 \theta)-\frac{1}{\rho^{-}}\textbf{N}_f\cdot\underline{\nabla p}^-
-\sum_{i,j=1,2}\underline{u}_i^-\underline{u}_j^-\partial_i\partial_j f+\frac{1}{\rho^{-}}\sum_{i,j=1,2}\underline{h}_i^-\underline{h}_j^-\partial_i\partial_j f.
\end{equation*}
Therefore, it holds that
\begin{equation}\label{1}
\begin{aligned}
&2(\underline{u}_1^+\partial_1 \theta+\underline{u}_2^+\partial_2 \theta)+\frac{1}{\rho^{+}}\textbf{N}_f\cdot\underline{\nabla p}^+
+\sum_{i,j=1,2}\underline{u}_i^+\underline{u}_j^+\partial_i\partial_j f-\frac{1}{\rho^{+}}\sum_{i,j=1,2}\underline{h}_i^+\underline{h}_j^+\partial_i\partial_j f\\
=&2(\underline{u}_1^-\partial_1 \theta+\underline{u}_2^-\partial_2 \theta)+\frac{1}{\rho^{-}}\textbf{N}_f\cdot\underline{\nabla p}^-
+\sum_{i,j=1,2}\underline{u}_i^-\underline{u}_j^-\partial_i\partial_j f-\frac{1}{\rho^{-}}\sum_{i,j=1,2}\underline{h}_i^-\underline{h}_j^-\partial_i\partial_j f.
\end{aligned}
\end{equation}

From the first equation of \eqref{equa2} and the boundary condition \eqref{bc-uh}, we get
\begin{equation*}
\Delta p^{\pm}=\mathrm{tr}(\nabla \textbf{h}^{\pm})^2-\rho^\pm \mathrm{tr}(\nabla \textbf{u}^{\pm})^2\quad\text{ in } \Omega_f^\pm,
\end{equation*}
and
\begin{align*}
	\pa_3p^\pm=0\quad\text{ on } \Gamma^\pm.
\end{align*}
Recalling the definition of harmonic extension $\mathcal{H}^\pm_f$, we have the following representation for the pressure $p^{\pm}$:
\begin{equation*}
p^{\pm}=\mathcal{H}^\pm_f\underline{p}^\pm+\rho^\pm p_{\textbf{u}^{\pm},\textbf{u}^{\pm}}-p_{\textbf{h}^{\pm},\textbf{h}^{\pm}},
\end{equation*}
where $p_{\textbf{v}^{\pm},\textbf{v}^{\pm}}$ denotes the solution of the elliptic equation
\begin{equation}\label{eq-p}
\left\{\begin{aligned}
&\Delta p_{\textbf{v}^{\pm},\textbf{v}^{\pm}}=-\mathrm{tr}(\nabla\textbf{v}^{\pm} \nabla\textbf{v}^{\pm})\quad\quad &\mathrm{in} \quad \Omega_f^{\pm},\\
&p_{\textbf{v}^{\pm},\textbf{v}^{\pm}}=0\quad\quad &\mathrm{on} \quad \Gamma_f,\\
&\textbf{e}_3\cdot\nabla p_{\textbf{v}^{\pm},\textbf{v}^{\pm}}=0\quad\quad &\mathrm{on} \quad \Gamma^\pm.
\end{aligned}
\right.
\end{equation}
Thus, we infer from (\ref{1}) that
\begin{align*}
	&\frac{1}{\rho^+}\textbf{N}_f\cdot\nabla\mathcal{H}^+_f\underline{p}^+
-\frac{1}{\rho^-}\textbf{N}_f\cdot\nabla\mathcal{H}^-_f\underline{p}^-\\
=&-\big[2(\underline{u}_1^+\partial_1 \theta+\underline{u}_2^+\partial_2 \theta)+
\textbf{N}_f\cdot\underline{\nabla(p_{\textbf{u}^{+},\textbf{u}^{+}}-\frac{1}{\rho^+}p_{\textbf{h}^{+},\textbf{h}^{+}})}+
\sum_{i,j=1,2}(\underline{u}_i^+\underline{u}_j^+-\frac{1}{\rho^{+}}\underline{h}_i^+\underline{h}_j^+ )\partial_i\partial_j  f\big]\\
&+\big[2(\underline{u}_1^-\partial_1 \theta+\underline{u}_2^-\partial_2 \theta)+
\textbf{N}_f\cdot\underline{\nabla(p_{\textbf{u}^{-},\textbf{u}^{-}}-\frac{1}{\rho^-}p_{\textbf{h}^{-},\textbf{h}^{-}})}+
\sum_{i,j=1,2}(\underline{u}_i^-\underline{u}_j^--\frac{1}{\rho^{-}}\underline{h}_i^-\underline{h}_j^- )\partial_i\partial_j  f\big]\\
\overset{\Delta}{=}&-g^++g^-.
\end{align*}
Recalling the definition of Dirichlet-Neumann operator, we rewrite the above equality as
\begin{equation*}
-\frac{1}{\rho^+}\mathcal{N}^+_f \underline{p}^+-\frac{1}{\rho^-}\mathcal{N}^-_f \underline{p}^-=-g^+ + g^-.
\end{equation*}
As $\underline{p}^+-\underline{p}^-=\sigma H(f)$ on $\Gamma_f$, we have
\begin{align*}
	\underline{p}^\pm=\widetilde{\mathcal{N}_f}^{-1}\big(g^+ - g^-\pm\frac{1}{\rho^\mp}\mathcal{N}_f^\mp \sigma H(f)\big)
\end{align*}
where
\begin{align*}
	\widetilde{\mathcal{N}_f}\overset{\text{def}}{=}\frac{1}{\rho^+}\mathcal{N}^+_f+\frac{1}{\rho^-}\mathcal{N}^-_f.
\end{align*}
Moreover, it's easy to see
\begin{equation*}
\mathcal{N}^+_f=\big(\frac{1}{\rho^+}+\frac{1}{\rho^-})^{-1}(\widetilde{\mathcal{N}_f}+\frac{1}{\rho^-}(\mathcal{N}^+_f-\mathcal{N}^-_f)\big),
\end{equation*}
\begin{equation*}
\mathcal{N}^-_f=\big(\frac{1}{\rho^+}+\frac{1}{\rho^-})^{-1}(\widetilde{\mathcal{N}_f}-\frac{1}{\rho^+}(\mathcal{N}^+_f-\mathcal{N}^-_f)\big),
\end{equation*}
and
\begin{align*}
\frac{1}{\rho^+}\mathcal{N}^+_f\widetilde{\mathcal{N}_f}^{-1}g^-+\frac{1}{\rho^-}\mathcal{N}^-_f\widetilde{\mathcal{N}_f}^{-1}g^+
=\frac{\rho^+g^+ +\rho^-g^-}{\rho^++\rho^-}-\frac{1}{\rho^++\rho^-}(\mathcal{N}^+_f-\mathcal{N}^-_f)\widetilde{\mathcal{N}_f}^{-1}(g^+-g^-),\\
\mathcal{N}^+_f\widetilde{\mathcal{N}_f}^{-1}\mathcal{N}^-_f=\frac{\rho^+\rho^-}{(\rho^++\rho^-)^2}(\rho^+\mathcal{N}^+_f+\rho^-\mathcal{N}^-_f)-\frac{\rho^+\rho^-}{(\rho^++\rho^-)^2}(\mathcal{N}^+_f-\mathcal{N}^-_f)\widetilde{\mathcal{N}_f}^{-1}(\mathcal{N}^+_f-\mathcal{N}^-_f).
\end{align*}
Accordingly, we obtain that
\begin{equation}\label{system1}
\begin{aligned}
\partial_t \theta=&\frac{1}{\rho^+}\mathcal{N}^+_f p^+-g^+=\frac{1}{\rho^+}\mathcal{N}^+_f\widetilde{\mathcal{N}_f}^{-1}\big(g^+-g^- +\frac{1}{\rho^-}\mathcal{N}^-_f\sigma H(f)\big)-g^+\\
=&-\frac{1}{\rho^+}\mathcal{N}^+_f\widetilde{\mathcal{N}_f}^{-1}g^- - \frac{1}{\rho^-}\mathcal{N}^-_f\widetilde{\mathcal{N}_f}^{-1}g^+
+\frac{\sigma}{\rho^+\rho^-}\mathcal{N}^+_f\widetilde{\mathcal{N}_f}^{-1}\mathcal{N}^-_f H(f)\\
=&-\frac{\rho^+g^+ +\rho^-g^-}{\rho^++\rho^-}+\frac{1}{\rho^++\rho^-}(\mathcal{N}^+-\mathcal{N}^-)\widetilde{\mathcal{N}_f}^{-1}(g^+-g^-)\\
&+\frac{\sigma}{(\rho^++\rho^-)^2}(\rho^+\mathcal{N}^+_f+\rho^-\mathcal{N}^-_f)H(f)-\frac{\sigma}{(\rho^++\rho^-)^2}(\mathcal{N}^+_f-\mathcal{N}^-_f)\widetilde{\mathcal{N}_f}^{-1}(\mathcal{N}^+_f-\mathcal{N}^-_f)H(f)\\
=&\frac{\sigma}{(\rho^++\rho^-)^2}(\rho^+\mathcal{N}^+_f+\rho^-\mathcal{N}^-_f)H(f)\\
&-\frac{2}{\rho^+ +\rho^-}((\rho^+\underline{u}_1^+ +\rho^-\underline{u}_1^-)\partial_1 \theta+(\rho^+\underline{u}_2^+ +\rho^-\underline{u}_2^-)\partial_2 \theta)\\
&-\frac{1}{\rho^+ +\rho^-}\sum_{i,j=1,2}(\rho^+ \underline{u}_i^+\underline{u}_j^+ -\underline{h}_i^+\underline{h}_j^+ +\rho^- \underline{u}_i^-\underline{u}_j^- -\underline{h}_i^-\underline{h}_j^-)\partial_i\partial_j  f\\
&-\frac{\sigma}{(\rho^++\rho^-)^2}(\mathcal{N}^+_f-\mathcal{N}^-_f)\widetilde{\mathcal{N}_f}^{-1}(\mathcal{N}^+_f-\mathcal{N}^-_f)H(f)\\
&+\frac{2}{\rho^+ +\rho^-}(\mathcal{N}_f^+ -\mathcal{N}_f^-)\widetilde{\mathcal{N}_f}^{-1}\mathcal{P}
((\underline{u}_1^+ - \underline{u}_1^-)\partial_1 \theta+(\underline{u}_2^+ - \underline{u}_2^-)\partial_2 \theta)\\
&+\frac{1}{\rho^+ +\rho^-}(\mathcal{N}_f^+ -\mathcal{N}_f^-)\widetilde{\mathcal{N}_f}^{-1}\mathcal{P}(\sum_{i,j=1,2}(\underline{u}_i^+\underline{u}_j^+-\frac{1}{\rho^{+}}\underline{h}_i^+\underline{h}_j^+ -\underline{u}_i^-\underline{u}_j^-+\frac{1}{\rho^{-}}\underline{h}_i^-\underline{h}_j^-)\partial_i\partial_j  f)\\
&-\frac{1}{\rho^+ +\rho^-}\mathbf{N}_f\cdot\big(\underline{\nabla(\rho^+ p_{\textbf{u}^{+},\textbf{u}^{+}}-p_{\textbf{h}^{+},\textbf{h}^{+}})}+\underline{\nabla(\rho^- p_{\textbf{u}^{-},\textbf{u}^{-}}-p_{\textbf{h}^{-},\textbf{h}^{-}})}\big)\\
&+\frac{1}{\rho^+ +\rho^-}(\mathcal{N}_f^+ -\mathcal{N}_f^-)\widetilde{\mathcal{N}_f}^{-1}\mathcal{P}\mathbf{N}_f\cdot\big(\underline{\nabla(\rho^+ p_{\textbf{u}^{+},\textbf{u}^{+}}-p_{\textbf{h}^{+},\textbf{h}^{+}})}-\underline{\nabla(\rho^- p_{\textbf{u}^{-},\textbf{u}^{-}}-p_{\textbf{h}^{-},\textbf{h}^{-}})}\big).
\end{aligned}
\end{equation}
Here $\mathcal{P}:L^2(\mathbb{T}^2)\rightarrow L^2(\mathbb{T}^2)$ denotes the projection operator such that
\begin{equation*}
\mathcal{P}g=g-\langle g\rangle,
\end{equation*}
where $\langle g\rangle:=\int_{\mathbb{T}^2}gdx'$. We can apply the operator $\mathcal{P}$ to some of the terms in \eqref{system1} for the same reason as in \cite{SWZ1}, since it does not change the formulation of this system by the fact that $\mathcal{P}g^\pm=g^\pm$.

From now on until Section 7, we will only discuss the case $\rho^+=\rho^-=1$ for simplicity, and there is no essential difference between this case and the general case.
\subsection{Equations for the Vorticity and Current}
Now we derive the equations for
\begin{equation}
\vom^\pm=\nabla\times\textbf{u}^{\pm},\quad\quad \vxi^\pm=\nabla\times\textbf{h}^{\pm}.
\end{equation}
It follows from \eqref{equa2} by direct calculation that $(\vom^\pm,\vxi^\pm)$ satisfies
\begin{equation}\label{eq:curl}
  \left\{
  	\begin{array}{ll}
  		\partial_t \vom^\pm+\textbf{u}^{\pm}\cdot\nabla\vom^\pm-\textbf{h}^{\pm}\cdot\nabla\vxi^\pm
=\vom^\pm\cdot\nabla\textbf{u}^{\pm}-\vxi^\pm\cdot\nabla\textbf{h}^{\pm}&\text{ in } \Omega_f^\pm,\\
\partial_t \vxi^\pm+\textbf{u}^{\pm}\cdot\nabla\vxi^\pm-\textbf{h}^{\pm}\cdot\nabla\vom^\pm
=\vxi^\pm\cdot\nabla\textbf{u}^{\pm}-\vom^\pm\cdot\nabla\textbf{h}^{\pm}-2\sum_{i=1}^3\nabla u_i^\pm\times\nabla h_i^\pm&\text{ in } \Omega_f^\pm.
  	\end{array}
  \right.
\end{equation}
\subsection{Tangential velocity and magnetic field on $\Gamma^\pm$}
As in \cite{SWZ1}, we need to derive the evolution equations of the following quantities:
\begin{equation}
\mathfrak{a}_i^\pm(t)=\int_{\mathbb{T}^2}u_i^\pm(t,x',\pm 1)dx', \quad \mathfrak{b}_i^\pm(t)=\int_{\mathbb{T}^2}h_i^\pm(t,x',\pm 1)dx'.
\end{equation}
From the fact that $u_3^\pm(t,x',\pm1)\equiv0$, we deduce that for $i=1,2$
\begin{equation*}
\partial_t u_i^\pm+ u_j^\pm\partial_j u_i^\pm- h_j^\pm\partial_j h_i^\pm - \partial_i p^\pm =0 \quad \mathrm{on} \quad \Gamma^\pm .
\end{equation*}
As a result, it holds that
\begin{equation*}
\partial_t \mathfrak{a}_i^\pm+\int_{\Gamma^\pm}(u_j^\pm\partial_j u_i^\pm-h_j^\pm\partial_j h_i^\pm)dx'=0,
\end{equation*}
or equivalently
\begin{equation}\label{eq-beta}
\mathfrak{a}_i^\pm(t)=\mathfrak{a}_i^\pm(0)-\int^t_0\int_{\Gamma^\pm}(u_j^\pm\partial_j u_i^\pm-h_j^\pm\partial_j h_i^\pm)(x',t')dx'dt'.
\end{equation}
Similarly, we have
\begin{equation}\label{eq-gamma}
\mathfrak{b}_i^\pm(t)=\mathfrak{b}_i^\pm(0)-\int^t_0\int_{\Gamma^\pm}(u_j^\pm\partial_j h_i^\pm-h_j^\pm\partial_j u_i^\pm)(x',t')dx'dt'.
\end{equation}
\subsection{Solvability Conditions for the Div-Curl System}
In order to recover the divergence-free velocity field or magnetic field from its curled part, we need to solve the following div-curl system:
\begin{equation}
\left\{\begin{aligned}
&\mathrm{curl}\textbf{u}^{\pm}=\vom^\pm, \quad \mathrm{div}\textbf{u}^{\pm}=g^\pm \quad &\mathrm{in} \quad \Omega_f^{\pm},\\
&\textbf{u}^{\pm}\cdot\textbf{N}_f=\theta \quad &\mathrm{on} \quad \Gamma_f,\\
&\textbf{u}^{\pm}\cdot \mathbf{e_3}=0, \quad \int_{\Gamma^\pm}u_i dx'=\mathfrak{a}_i^\pm(i=1,2) \quad &\mathrm{on} \quad \Gamma^\pm.\\
\end{aligned}
\right.
\end{equation}
The solvability of the above system was obtained by \cite{SWZ1} under the following compatibility conditions:
\begin{itemize}
\item[C1.]$\mathrm{div}\vom^\pm=0\quad \mathrm{in} \quad \Omega_f^{\pm}$,
\item[C2.]$\int_{\Gamma^\pm} \omega_3^\pm dx'=0$,
\item[C3.]$\int_{\mathbb{T}^2}\theta dx'=\mp\int_{\Omega_f^\pm}g^\pm dx$.
\end{itemize}

\section{Energy Estimates for the Linearized System}
In this section, we linearize the equivalent system derived in Section 3 around given functions $(f,\textbf{u}^{\pm},\textbf{h}^{\pm})$ and give the energy estimates for the linearized system. We assume that there exists $T > 0$ such that for any $t\in[0,T]$, there holds
\begin{align*}
	&\|(\underline\vu^{\pm},\underline\vh^{\pm})\|_{W^{1,\infty}}(t)+\|f\|_{W^{2,\infty}}(t)\le L_0,\\
	&\sigma^{\frac{1}{2}}\|f\|_{H^{s+1}}(t)+\|f\|_{H^{s+\frac{1}{2}}}(t)+\|\partial_t f\|_{H^{s-\frac{1}{2}}}(t)+\|\textbf{u}^{\pm}\|_{H^{s}(\Omega_f^{\pm})}(t)+\|\textbf{h}^{\pm}\|_{H^{s}(\Omega_f^{\pm})}(t)
\le L_1,\\
&\|(\underline{\partial_t \vu}^{\pm},\underline{\partial_t \vh}^{\pm})\|_{W^{1,\infty}}(t)\le L_2,\\
&\|f-f_*\|_{H^{s+\frac{1}{2}}}(t)\le \delta_0,\\
&-(1-c_0)\le f(t,x')\le (1-c_0),
\end{align*}
and
\begin{equation*}
\left\{\begin{aligned}
&\mathrm{div}\textbf{u}^{\pm}=\mathrm{div}\textbf{h}^{\pm}=0 \quad &\mathrm{in} \quad \Omega_f^{\pm},\\
&\underline{\textbf{h}}^{\pm}\cdot \textbf{N}_f=0, \quad \underline{\textbf{u}}^{\pm}\cdot \textbf{N}_f=\partial_t f \quad &\mathrm{on} \quad \Gamma_f,\\
&u_3^\pm=h_3^\pm=0 \quad &\mathrm{on} \quad \Gamma^\pm.\\
\end{aligned}
\right.
\end{equation*}
Here $s\ge6$ is a integer and $L_0,L_1,L_2,c_0,\delta_0$ are positive constants.
\subsection{Paralinearization of $\mathcal{N}^\pm_f$ and H}
The third order term $\frac{\sigma}{4}(\mathcal{N}^+_f H(f)+\mathcal{N}^-_f H(f))$ in \eqref{system1} is a fully nonlinear term of $f$, and is difficult to linearize by conventional methods. To overcome this difficulty, we use the paralinearization approach developed in \cite{ABZ1,AlM}. Here we follow the presentation by M\'etivier in \cite{MG}.

\begin{definition}\label{DefSymbol1}
$\forall m\in \mathbb R$, we say that a symbol $a\in\Sigma^m$ if and only if $a$ has the form
\begin{align*}
a=a^{(m)}+a^{(m-1)}
\end{align*}
with
\begin{align*}
a^{(m)}(t,x,\xi)=F(\nabla f(t,x),\xi),
\end{align*}
\begin{align*}
a^{(m-1)}(t,x,\xi)=\sum_{|\alpha|=2}G_\alpha(\nabla f(t,x),\xi)\partial_x^\alpha f(t,x),
\end{align*}
such that:
\begin{itemize}
\item $T_a$ maps real-valued functions to real-valued functions;
\item $F\in C^\infty$ is a real-valued function of $(\zeta,\xi)\in\mathbb R^d\times(\mathbb R^d\backslash0)$, and homogeneous of order m in $\xi$, with a continuous function $C=C(\zeta)>0$ such that
    $F(\zeta,\xi)\ge C(\zeta)|\xi|^m$ for $\forall (\zeta,\xi)\in\mathbb R^d\times(\mathbb R^d\backslash0)$;
\item $G_\alpha$ is a $C^\infty$ complex-valued function of $(\zeta,\xi)\in\mathbb R^d\times(\mathbb R^d\backslash0)$, homogeneous of order $m-1$ in $\xi$.
\end{itemize}
\end{definition}
Let $m\in \mathbb R$ and $A,B$ is two operator of order m, we say $A\sim B$ if $A-B$ is of order $m-2$. We first list some important properties.
\begin{proposition}\label{symbol.pro1}
\cite{ABZ1}Let $m,m'\in \mathbb R$. Then\\
$(1)$ \quad If $a\in \Sigma^m$ and $b\in \Sigma^{m'}$, then $T_a T_b\sim T_{a\sharp b}$ where $a \sharp b\in\Sigma^{m+m'}$ is given by
\begin{align*}
a \sharp b=a^{(m)}b^{(m')}+a^{(m-1)}b^{(m')}+a^{(m)}b^{(m'-1)}+\frac{1}{i}\partial_{\xi}a^{(m)}\cdot\partial_x b^{(m')}.
\end{align*}
$(2)$ \quad If $a\in \Sigma^m$, then $(T_a)^*\sim T_b$ where $b\in\Sigma^m$ is given by
\begin{align*}
b=a^{(m)}+\overline{a^{(m-1)}}+\frac{1}{i}(\partial_x\cdot\partial_\xi)a^{(m)}.
\end{align*}
\end{proposition}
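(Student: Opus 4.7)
The plan is to follow standard Bony--Meyer paradifferential symbolic calculus, as presented in \cite{MG}, and verify that the general composition and adjoint asymptotic expansions, when specialized to symbols of the form described in Definition~\ref{DefSymbol1}, truncate precisely to the stated formulas modulo operators of order $m+m'-2$ and $m-2$ respectively. Since the statement is quoted verbatim from \cite{ABZ1}, the work is to (i) check that the symbols in $\Sigma^m$ fall within the scope of the paradifferential calculus with symbols of limited regularity, (ii) carry out the asymptotic expansion, and (iii) identify which terms are principal, which are subprincipal, and which are absorbed into the remainder.

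For part (1), I would start from the general asymptotic
\begin{equation*}
a\sharp b\sim\sum_{|\alpha|\ge 0}\frac{1}{i^{|\alpha|}\alpha!}\,\partial_\xi^\alpha a\,\partial_x^\alpha b
\end{equation*}
and keep only the terms of order strictly greater than $m+m'-2$. Writing $a=a^{(m)}+a^{(m-1)}$ and $b=b^{(m')}+b^{(m'-1)}$, the $|\alpha|=0$ contribution yields $a^{(m)}b^{(m')}+a^{(m-1)}b^{(m')}+a^{(m)}b^{(m'-1)}$, with the cross term $a^{(m-1)}b^{(m'-1)}$ already of order $m+m'-2$ and hence absorbed. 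For $|\alpha|=1$, only $\tfrac{1}{i}\partial_\xi a^{(m)}\cdot\partial_x b^{(m')}$ survives at order $m+m'-1$; the mixed terms $\partial_\xi a^{(m)}\cdot\partial_x b^{(m'-1)}$ and $\partial_\xi a^{(m-1)}\cdot\partial_x b^{(m')}$ carry an extra derivative of $f$ that forces them into the remainder once the limited-regularity bound is applied. Terms with $|\alpha|\ge 2$ are automatically of order $\le m+m'-2$. One finally checks that $a\sharp b$ inherits the structure of Definition~\ref{DefSymbol1} by computing explicit $F$ and $G_\alpha$ for the surviving expression via the chain rule applied to $F(\nabla f,\xi)$ and $G_\alpha(\nabla f,\xi)\partial_x^\alpha f$.

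For part (2), I would invoke the general adjoint expansion
\begin{equation*}
b\sim\sum_{|\alpha|\ge 0}\frac{1}{i^{|\alpha|}\alpha!}\,\partial_\xi^\alpha\partial_x^\alpha \bar a
\end{equation*}
and truncate at order $m-2$. The hypothesis that $T_a$ preserves real-valuedness forces $a^{(m)}$ to be real, so the $|\alpha|=0$ contribution simplifies to $a^{(m)}+\overline{a^{(m-1)}}$, and the $|\alpha|=1$ contribution simplifies to $\tfrac{1}{i}(\partial_x\cdot\partial_\xi)a^{(m)}$. The apparently order-$(m-1)$ term coming from applying $\partial_x\partial_\xi$ to $\overline{a^{(m-1)}}$ does not actually survive: because $a^{(m-1)}$ already contains $\partial^2 f$, an additional $\partial_x$ produces a $\partial^3 f$ factor which the limited-regularity paradifferential bounds push into the $m-2$ remainder class.

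The main technical obstacle is not the symbolic arithmetic but the regularity bookkeeping for the remainder. The subprincipal part $a^{(m-1)}$ depends linearly on $\partial^2 f$, which under the a priori assumption $f\in H^{s+\frac{1}{2}}$ only belongs to $H^{s-\frac{3}{2}}$. Ensuring that every truncated term actually maps Sobolev spaces with the expected loss of derivatives requires invoking the version of the Bony--Meyer theorem handling symbols of limited H\"older--Sobolev regularity in the spatial variable, and tracking exactly how many derivatives of $f$ each omitted term consumes. Once this accounting is in place the symbolic identities (1) and (2) are immediate; all the substance lies in verifying that every discarded term indeed has order at most $m+m'-2$ or $m-2$ in the appropriate Sobolev operator-norm sense.
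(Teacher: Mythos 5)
Your proposal is correct and follows essentially the same route as the paper: split $a=a^{(m)}+a^{(m-1)}$, $b=b^{(m')}+b^{(m'-1)}$, apply the Bony--Meyer composition/adjoint expansions to each pair with the order of accuracy $\rho$ matched to the actual $x$-regularity of the pieces ($\rho=2$ for the principal--principal pair using $f\in H^{s+1}\subset W^{3,\infty}$, $\rho=1$ for the mixed pairs since the subprincipal symbols involve $\partial^2 f\in W^{1,\infty}$, and Proposition \ref{para1} directly for the subprincipal--subprincipal pair), and collect the remainders, which are all of order $m+m'-2$.

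One small correction in your bookkeeping: the mixed $|\alpha|=1$ terms $\partial_\xi a^{(m)}\cdot\partial_x b^{(m'-1)}$, $\partial_\xi a^{(m-1)}\cdot\partial_x b^{(m')}$, and likewise $\partial_\xi\partial_x\overline{a^{(m-1)}}$ in part (2), are already of order $m+m'-2$ (respectively $m-2$) by pure $\xi$-degree counting, not "apparently order $m+m'-1$" (resp.\ "$m-1$"); the $\partial^3 f$ factor is not what drops their order, it is what limits the regularity $\rho$ one may take in the composition theorem for those pieces and hence confines the expansion to $|\alpha|=0$, which is exactly what the paper's choice of $\rho=1$ encodes.
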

\begin{proof}
From (\ref{composition1}), we can see that for $\rho=2$
\begin{align*}
\|T_{a^{(m)}}T_{b^{(m')}}-T_{a^{(m)}b^{(m;)}+
\frac{1}{i}\partial_{\xi}a^{(m)}\cdot\partial_xb^{m'}}\|_{H^{\mu}\rightarrow H^{\mu-m-m'+2}}\le C(\|\nabla f\|_{W^{2,\infty}}).
\end{align*}
Also, for $\rho=1$, it holds that
\begin{align*}
\|T_{a^{(m)}}T_{b^{(m'-1)}}-T_{a^{(m)}b^{(m'-1)}}\|_{H^{\mu}\rightarrow H^{\mu-m-m'+2}}\le C(\|\nabla f\|_{W^{2,\infty}}),
\end{align*}
\begin{align*}
\|T_{a^{(m-1)}}T_{b^{(m')}}-T_{a^{(m-1)}b^{(m')}}\|_{H^{\mu}\rightarrow H^{\mu-m-m'+2}}\le C(\|\nabla f\|_{W^{2,\infty}}).
\end{align*}
Moreover, (\ref{symbol.est1}) implies that
\begin{align*}
\|T_{a^{(m-1)}}T_{b^{(m'-1)}}\|_{H^{\mu}\rightarrow H^{\mu-m-m'+2}}\le C(\|\nabla f\|_{W^{2,\infty}}).
\end{align*}
The desired conclusion of the first point comes from the Sobolev embeddong $H^{s+1}\subset W^{3,\infty}$. Furthermore, it also shows that $a\sharp b\in\Sigma^{m+m'}$.\par
Similarly, the second point follows from (\ref{adjoint1}).
\end{proof}

Next, we show the paralinearization of the Dirichlet-Neumann operator and the mean curvature operator.
\begin{lemma}\label{decomposition1}\cite{ABZ1}
Assume that $(f,\psi)\in H^{s+1}(\mathbb{T}^2)\times H^{s+\frac{1}{2}}(\mathbb{T}^2)$, then
\begin{align*}
	\mathcal{N}_f^+\psi=T_{\lambda^+}\psi+R^+_1(f,\psi)+r_1^+(f,\psi),\quad\mathcal{N}_f^-\psi=T_{\lambda^-}\psi+R^-_1(f,\psi)+r_1^-(f,\psi).
\end{align*}
Here the symbols $\lambda^\pm=\lambda^{\pm(1)}+\lambda^{\pm(0)}$ are given by
\begin{equation}
\begin{aligned}
&\lambda^{-(1)}=\lambda^{+(1)}=\sqrt{(1+|\nabla f|^2)|\xi|^2-(\nabla f\cdot\xi)^2},\\
&\lambda^{-(0)}=-\overline{\lambda^{+(0)}}=\frac{1+|f|^2}{2\lambda^{-(1)}}\{\mathrm{div}(\alpha^{(1)}\nabla f)+i\partial_\xi\lambda^{-(1)}\cdot\nabla\alpha^{(1)}\}.
\end{aligned}
\end{equation}
with
$$\alpha^{(1)}=\frac{1}{1+|\nabla f|^2}(\lambda^{-(1)}+i\nabla f\cdot\xi).$$
Moreover, we have the estimates
\begin{align*}
	\|R^+_1(f,\psi)\|_{H^{s-\frac{1}{2}}}+\|R^-_1(f,\psi)\|_{H^{s-\frac{1}{2}}}&\le C(\|f\|_{H^{3}},\|\psi\|_{H^{3}})\|f\|_{H^{s+\frac{1}{2}}},\\
	\|r_1^+(f,\psi)\|_{H^{s-\frac{1}{2}}}+\|r_1^-(f,\psi)\|_{H^{s-\frac{1}{2}}}&\le C(\|f\|_{H^{s-\frac{1}{2}}})\|\nabla\psi\|_{H^{s-2}}.
\end{align*}
\end{lemma}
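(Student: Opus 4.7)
The plan is to establish this via the now-standard parabolic factorization of the Dirichlet--Neumann operator, as developed by Alazard--M\'etivier and refined by Alazard--Burq--Zuily; since the domain here is a slab with the top/bottom Neumann condition, everything carries over with cosmetic modifications.

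The first step is to flatten the fluid region. I would introduce a change of variables $\kappa^\pm:\Omega_*^\pm\to\Omega_f^\pm$ of the form $\kappa^\pm(x',z)=(x',z+\eta^\pm(x',z))$, with $\eta^\pm$ chosen so that $\eta^\pm|_{z=f_*(x')}=f(x')-f_*(x')$, $\eta^\pm|_{z=\pm1}=0$, and $\nabla\eta^\pm$ as regular as $\nabla f$ (e.g.\ a smoothed-in-$z$ extension). Let $v^\pm(x',z)=(\cH_f^\pm\psi)\circ\kappa^\pm$; then the harmonic-extension problem becomes an elliptic equation $\pa_z^2 v^\pm+\alpha^\pm\Delta_{x'}v^\pm+\beta^\pm\cdot\nabla_{x'}\pa_z v^\pm+\gamma^\pm\pa_z v^\pm=0$ on the fixed slab, with Dirichlet data $\psi$ at $z=f_*(x')$ and Neumann data $0$ at $z=\pm1$, and with coefficients $(\alpha^\pm,\beta^\pm,\gamma^\pm)$ that are smooth in $\nabla f$ and $\nabla^2\eta^\pm$. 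The Neumann derivative at the free surface is then recovered as a linear combination of $\pa_zv^\pm$ and $\nabla_{x'}v^\pm$ evaluated at $z=f_*(x')$, whose principal symbol is exactly $\lambda^{\pm(1)}=\sqrt{(1+|\nabla f|^2)|\xi|^2-(\nabla f\cdot\xi)^2}$.

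The second, and central, step is to paralinearize the coefficients using Bony's theorem and then factor the resulting second-order paradifferential operator. One writes $\pa_z^2v+T_{\alpha^\pm}\Delta_{x'}v+T_{\beta^\pm}\cdot\nabla_{x'}\pa_zv+T_{\gamma^\pm}\pa_zv=F$, where $F\in L^\infty_z H^{s-1/2}$ has the two-scale structure announced in the statement (one part controlled by $\|f\|_{H^{s+1/2}}$ times $C(\|f\|_{H^3},\|\psi\|_{H^3})$, the other controlled by $\|\nabla\psi\|_{H^{s-2}}$ times $C(\|f\|_{H^{s-1/2}})$, arising from Bony-type remainders and from the interior smoothing of the elliptic operator). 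I would then look for symbols $a^\pm,A^\pm\in\Sigma^1$ with $\mathrm{Re}\,A^\pm>0>\mathrm{Re}\,a^\pm$ such that $(\pa_z-T_{a^\pm})(\pa_z-T_{A^\pm})v=F'$ modulo a smoothing remainder. Using the symbolic calculus of Proposition~\ref{symbol.pro1}, the factorization amounts to solving $a^\pm+A^\pm=-T_{\gamma^\pm}-iT_{\beta^\pm}\cdot\xi$ and $a^\pm A^\pm=T_{\alpha^\pm}|\xi|^2+\tfrac{1}{i}\pa_\xi A^\pm\cdot\pa_xA^\pm$ (with the appropriate sub-principal correction) in $\Sigma^1$. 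Solving this system order by order produces the principal symbols $A^{\pm(1)}=\pm i\lambda^{\pm(1)}$ and, after one round of the $\tfrac1i\pa_\xi\cdot\pa_x$ correction, the sub-principal symbols whose restrictions to $z=f_*(x')$ give precisely the formula for $\lambda^{\pm(0)}$.

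The third step is to integrate the first-order factor $(\pa_z-T_{A^\pm})w^\pm=\tilde F$ in the good direction: downward from $z=+1$ for $\mathcal{N}^+_f$ and upward from $z=-1$ for $\mathcal{N}^-_f$, using the ellipticity $\pm\mathrm{Re}\,A^\pm>c|\xi|$ together with the Neumann boundary condition to produce a uniform bound. Evaluating $w^\pm$ at $z=f_*(x')$ identifies $\pa_n v^\pm|_{\Gamma_f}$ with $T_{\lambda^\pm}\psi$ up to a remainder that inherits the two-scale splitting described above; this gives $R_1^\pm$ and $r_1^\pm$ with the advertised bounds after one more application of Bony's paraproduct estimates to convert products into paraproducts. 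I expect the main technical obstacle to be Step~2: keeping track of the sub-principal corrections so that the computed $\lambda^{\pm(0)}$ agrees with the stated formula and, in particular, realizing the identity $\lambda^{-(0)}=-\overline{\lambda^{+(0)}}$, which reflects that the Neumann derivatives from the two sides differ only by a sign at leading order. Everything else is routine paradifferential bookkeeping.
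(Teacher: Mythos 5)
Your proposal reproduces, in outline, the Alazard--Burq--Zuily derivation of the paralinearized Dirichlet--Neumann operator: flatten the domain, paralinearize the coefficients, factor the elliptic operator as $(\pa_z - T_{a^\pm})(\pa_z - T_{A^\pm})$ modulo a smoothing error, and integrate in the good $z$-direction. That is indeed the right underlying route, but the paper does not carry it out. It simply cites \cite{ABZ1} for the formula $\mathcal{N}_f^\pm\psi = T_{\lambda^\pm}(\psi - T_{\mathcal{B}}f) - T_V\cdot\nabla f + r_1^\pm$, with $\mathcal{B} = (\nabla f\cdot\nabla\psi + \mathcal{N}_f^\pm\psi)/(1+|\nabla f|^2)$ and $V = \nabla\psi - \mathcal{B}\nabla f$ the usual good unknowns, together with the stated bound on $r_1^\pm$. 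The only thing the paper proves afresh is the final bookkeeping you gloss over as ``one more application of Bony's paraproduct estimates'': it sets $R_1^\pm := -T_{\lambda^\pm}T_{\mathcal{B}}f - T_V\cdot\nabla f$ and bounds this in $H^{s-\frac{1}{2}}$ by $C(\|f\|_{H^3},\|\psi\|_{H^3})\|f\|_{H^{s+\frac{1}{2}}}$ via Proposition \ref{para1}, using that $T_{\lambda^\pm}$ is of order $1$ and $T_{\mathcal{B}}$, $T_V$ of order $0$ with seminorms controlled by low Sobolev norms of $f$ and $\psi$. So your route is correct, but the emphasis is inverted: you spend your effort re-deriving what the lemma cites, and leave vague the one estimate the lemma actually asserts and the paper actually proves. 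To close the argument you should exhibit $R_1^\pm$ explicitly in terms of $\mathcal{B}$ and $V$ and give that paraproduct bound.
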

\begin{proof}
	It is well known that the Dirichlet-Neumann operator is an elliptic operator of order 1, and the  expression of its principal symbol $\lambda^{(1)}$ and its subprincipal symbol $\lambda^{(0)}$ is given in \cite{IP}. We claim that the Dirichlet-Neumann operator $\mathcal{N}_f^-$ can be reformulated as
	\begin{align*}
		\mathcal{N}_f^-\psi=T_{\lambda^-}(\psi-T_{\mathcal{B}}f)-T_V\cdot \nabla f+r_1^-(f,\psi),
	\end{align*}
	which satisfies
	\begin{align*}
		\|r_1^-(f,\psi)\|_{H^{s-\frac{1}{2}}}\le C(\|f\|_{H^{s-\frac{1}{2}}})\|\nabla\psi\|_{H^{s-2}}.
	\end{align*}
	Here
	\begin{align*}
		\mathcal{B}:=\frac{\nabla f\cdot\nabla\psi+\mathcal{N}_f^-\psi}{1+|\nabla f|^2}, \quad V:=\nabla\psi-\mathcal{B}\nabla f.
	\end{align*}
	For the proofs of the above claim, we refer the readers to \cite{ABZ1}.

	Then, we let $R^-_1(f,\psi)=-T_{\lambda^-}T_{\mathcal{B}}f-T_V\cdot \nabla f$. By using Proposition \ref{para1} with $m=0,1$, one can see that
	\begin{align*}
\|T_{\lambda}T_{\mathcal{B}}f\|_{H^{s-\frac{1}{2}}}\le C(\|f\|_{H^{3}})\|T_{\mathcal{B}}f\|_{H^{s+\frac{1}{2}}}\le C(\|f\|_{H^{3}},\|\psi\|_{H^{3}})\|f\|_{H^{s+\frac{1}{2}}},\\
\|T_V\cdot \nabla f\|_{H^{s-\frac{1}{2}}}\le C(\|f\|_{H^{3}})\|\nabla f\|_{H^{s-\frac{1}{2}}}\le C(\|f\|_{H^{3}},\|\psi\|_{H^{3}})\|f\|_{H^{s+\frac{1}{2}}},
\end{align*}
which means that $\|R^-_1(f,\psi)\|_{H^{s-\frac{1}{2}}}\le C(\|f\|_{H^{3}},\|\psi\|_{H^{3}})\|f\|_{H^{s+\frac{1}{2}}}$.

The proof for $\mathcal{N}_f^+$ is similar.
\end{proof}

\begin{lemma}\label{lem-p-H}
\cite{ABZ1}Assume that $f\in H^{s+1}(\mathbb{T}^2)$, we shall paralinearize the $H(f)=\mathrm{div}(\frac{\nabla f}{\sqrt{1+|\nabla f|^2}})$ as $H(f)=-T_l f+r_2$, where $l=l^{(2)}+l^{(1)}$ is given by
\begin{equation}
\begin{aligned}
&l^{(2)}=(1+|\nabla f|^2)^{-\frac{1}{2}}(|\xi|^2-\frac{(\nabla f\cdot\xi)^2}{1+|\nabla f|^2}),\\
&l^{(1)}=-\frac{i}{2}(\partial_x\cdot\partial_\xi)l^{(2)},
\end{aligned}
\end{equation}
and $r_2\in L^\infty(0,T;H^{2s-5/2})$ satisfying
\begin{equation}
\|r_2\|_{L^\infty(0,T;H^{2s-5/2})}\le C(\|f\|_{L^\infty(0,T;H^{s+1})}).
\end{equation}
\end{lemma}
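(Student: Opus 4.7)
The plan is to follow the standard Bony paralinearization approach: reduce $H(f)$ to a quasilinear form, paralinearize the smooth nonlinearity, and then read off both symbols via the symbolic calculus of Proposition \ref{symbol.pro1}.

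\textbf{Step 1 (divergence form and coefficients).} First I would write the mean curvature in divergence form $H(f)=\partial_j G_j(\nabla f)$ with $G_j(p)=p_j/\sqrt{1+|p|^2}$, and record that $\partial_{p_k}G_j(p)=A^{jk}(p):=\delta_{jk}(1+|p|^2)^{-1/2}-p_jp_k(1+|p|^2)^{-3/2}$. A direct check shows that $A^{jk}(\nabla f)\xi_j\xi_k=l^{(2)}(x,\xi)$, which is why this reduction is the right starting point.

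\textbf{Step 2 (Bony's paralinearization of the smooth nonlinearity).} I would apply the standard paralinearization theorem for composition with a $C^\infty$ function to $G_j(\nabla f)$ (with $\nabla f\in H^s$, $s\ge 6>d/2$), yielding
\begin{equation*}
G_j(\nabla f)=T_{A^{jk}(\nabla f)}\partial_k f+R_j,\qquad \|R_j\|_{H^{2s-1}}\le C(\|f\|_{H^{s+1}}).
\end{equation*}
Applying $\partial_j$ gives $H(f)=\partial_j T_{A^{jk}(\nabla f)}\partial_k f+\partial_jR_j$ with $\partial_jR_j\in H^{2s-2}\subset H^{2s-5/2}$.

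\textbf{Step 3 (symbolic calculus for the principal and subprincipal parts).} I would rewrite $\partial_j T_{A^{jk}(\nabla f)}\partial_k$ as the composition $T_{i\xi_j}\circ T_{A^{jk}(\nabla f)}\circ T_{i\xi_k}$. Since $\partial_\xi A^{jk}(\nabla f)=0$, Proposition \ref{symbol.pro1}(1) yields, modulo an operator of order $0$,
\begin{equation*}
-A^{jk}(\nabla f)\xi_j\xi_k-\tfrac{1}{i}\partial_{x_j}A^{jk}(\nabla f)\xi_k=-l^{(2)}+i\,\partial_{x_j}A^{jk}(\nabla f)\xi_k.
\end{equation*}
The principal part is $-l^{(2)}$ as desired. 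For the subprincipal part, using $A^{jk}=A^{kj}$, I compute $\partial_{\xi_m}l^{(2)}=2A^{mk}\xi_k$, whence $\tfrac{1}{2}(\partial_x\!\cdot\!\partial_\xi)l^{(2)}=\partial_{x_j}A^{jk}(\nabla f)\xi_k$, so $i\,\partial_{x_j}A^{jk}(\nabla f)\xi_k=-\big(-\tfrac{i}{2}(\partial_x\!\cdot\!\partial_\xi)l^{(2)}\big)=-l^{(1)}$, matching the claimed expression for $l^{(1)}$.

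\textbf{Step 4 (membership in $\Sigma^2$ and remainder estimate).} By the chain rule $\partial_{x_j}A^{jk}(\nabla f)=\partial_{p_\ell}A^{jk}(\nabla f)\,\partial_j\partial_\ell f$, so $l^{(1)}=\sum_{|\alpha|=2}G_\alpha(\nabla f,\xi)\partial^\alpha f$ in the sense of Definition \ref{DefSymbol1}; the positivity and homogeneity of $l^{(2)}$ are also immediate. The total remainder consists of $\partial_jR_j\in H^{2s-2}$ and the order-$0$ composition error from Step 3 applied to $f\in H^{s+1}\subset H^{2s-5/2}$ (using $s\ge 6$). Both are controlled by $C(\|f\|_{H^{s+1}})$, giving $r_2\in L^\infty(0,T;H^{2s-5/2})$ with the claimed bound.

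\textbf{Main obstacle.} The principal symbol is essentially free from the reduction in Step 1. The real technical content is Step 3, where one must trust that the lone $\tfrac{1}{i}\partial_\xi a\cdot\partial_x b$ term in Proposition \ref{symbol.pro1}(1) reproduces \emph{exactly} $-\tfrac{i}{2}(\partial_x\!\cdot\!\partial_\xi)l^{(2)}$; the symmetry $A^{jk}=A^{kj}$ is what converts the single-derivative expression coming from the composition into the symmetrized Poisson-bracket form stated in the lemma, and without it the factor $\tfrac{1}{2}$ would not appear.
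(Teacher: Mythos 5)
Your Steps 1--3 correctly set up the argument: the divergence-form coefficients $A^{jk}(\nabla f)$, the vector paralinearization of the smooth composition $G_j(\nabla f)$ giving $G_j(\nabla f)=T_{A^{jk}}\partial_k f + R_j$ with $R_j\in H^{2s-1}$, and the symbolic computation showing the principal and subprincipal parts of $\partial_j T_{A^{jk}}\partial_k$ are $-l^{(2)}$ and $-l^{(1)}$; in particular the symmetry $A^{jk}=A^{kj}$ is exactly what converts the one-sided composition term into the symmetrized $-\frac{i}{2}(\partial_x\cdot\partial_\xi)l^{(2)}$.

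Step 4, however, contains a genuine gap. You dispose of the composition error by asserting that it is of order $0$, that it acts on $f\in H^{s+1}$, and that $H^{s+1}\subset H^{2s-5/2}$. That inclusion is false for $s\ge 6$: one has $2s-\frac{5}{2}>s+1$ precisely when $s>\frac{7}{2}$, so the inclusion runs the other way, $H^{2s-5/2}\subsetneq H^{s+1}$. An order-$0$ error applied to $f\in H^{s+1}$ lands only in $H^{s+1}$, strictly short of the claimed $H^{2s-5/2}$. The fix is that there is in fact no order-$0$ error in Step 3: since $T_{A^{jk}}$ is a paraproduct (symbol independent of $\xi$), the Leibniz identity $\partial_j(T_a u)=T_{\partial_j a}u+T_a\partial_j u$ holds exactly, so
\[
\partial_j T_{A^{jk}}\partial_k f = T_{\partial_j A^{jk}}\,\partial_k f + T_{A^{jk}}\,\partial_j\partial_k f,
\]
and each paraproduct on the right coincides with the paradifferential operators $T_{i(\partial_j A^{jk})\xi_k}f$ and $T_{-A^{jk}\xi_j\xi_k}f$ up to smoothing operators coming only from the low-frequency cutoff $\psi(\eta)$. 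Summing gives $-T_l f$ plus remainders which, together with $\partial_j R_j\in H^{2s-2}$, land in $H^{2s-5/2}$. Alternatively, you could use Proposition \ref{composition} with $\rho$ close to $s-1$ rather than $\rho=2$: since $i\xi_j$ is a degree-one polynomial in $\xi$, all terms with $|\alpha|\ge 2$ in the asymptotic expansion of $a\sharp b$ vanish identically, so the true composition error is of order $3-s$, and applying it to $f\in H^{s+1}$ gives $H^{2s-2}\subset H^{2s-5/2}$ as required.
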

\begin{remark}
From the expression of $\lambda^\pm$ and $l$, one can see that $\lambda^\pm\in\Sigma^1$ and $l\in\Sigma^2$, and they are both elliptic symbols.
\end{remark}

Based on the above results, we have
\begin{equation}\label{equation1}
\begin{aligned}
\mathcal{N}_f^+H(f)=-T_{\lambda^+}T_l f+T_{\lambda^+}r_2+R^+_1(f,H(f))+r_1^+(f,H(f)),\\
\mathcal{N}_f^-H(f)=-T_{\lambda^-}T_l f+T_{\lambda^-}r_2+R^-_1(f,H(f))+r_1^-(f,H(f)).
\end{aligned}
\end{equation}
By using Lemma \ref{decomposition1} and Lemma \ref{lem-p-H}, one can see that
\begin{align*}
\|T_{\lambda^+}r_2\|_{H^{s-\frac{1}{2}}}+\|T_{\lambda^-}r_2\|_{H^{s-\frac{1}{2}}}\le C(\|f\|_{H^3})\|r_2\|_{H^{s-\frac{1}{2}}}\le C(\|f\|_{H^{s+\frac{1}{2}}}),\\
\|R^+_1(f,H(f))\|_{H^{s-\frac{1}{2}}}+\|R^-_1(f,H(f))\|_{H^{s-\frac{1}{2}}}\le C (\|f\|_{H^5})\|f\|_{H^{s+\frac{1}{2}}},\\
\|r_1^+(f,H(f))\|_{H^{s-\frac{1}{2}}}+\|r_1^-(f,H(f))\|_{H^{s-\frac{1}{2}}}\le C(\|f\|_{H^{s-\frac{1}{2}}}) \|f\|_{H^{s+1}}.
\end{align*}
Accordingly, we rewrite the three order term as
\begin{align}\label{eq-paraline}
\frac{\sigma}{4}(\mathcal{N}^+_f H(f)+\mathcal{N}^-_f H(f))=-\frac{\sigma}{2}T_{\lambda}T_l f+\frac{\sigma}{4}R,
\end{align}
where
\begin{align*}
	\lambda=\underbrace{\frac{\lambda^{+(1)} +\lambda^{-(1)}}{2}}_{\lambda^{(1)}}+\underbrace{\frac{\lambda^{+(0)} +\lambda^{-(0)}}{2}}_{\lambda^{(0)}},
\end{align*}
and $R=T_{\lambda^+}r_2+R^+_1(f,H(f))+r_1^+(f,H(f))+T_{\lambda^-}r_2+R^-_1(f,H(f))+r_1^-(f,H(f))$ satisfying
\begin{align}\label{eq-est-para-R}
   	\|R\|_{H^{s-\frac{1}{2}}}\le C(\|f\|_{H^{s+\frac{1}{2}}})\|f\|_{H^{s+1}}.
\end{align}
Next, we symmetrize the above paradifferential operator $T_{\lambda}T_l$.
\begin{proposition}\label{symbol.pro4}
\cite{ABZ1}Let $q\in \Sigma^0$ and $\gamma\in \Sigma^{\frac{3}{2}}$ be defined by
\begin{align*}
&q=(1+|\nabla f|^2)^{(-\frac{1}{2})},\\
&\gamma=\underbrace{\sqrt{l^{(2)}\lambda^{(1)}}}_{\gamma^{(\frac{3}{2})}}+\underbrace{
\frac{1}{2i}(\partial_\xi \cdot\partial_x)\sqrt{l^{(2)}\lambda^{(1)}}}_{\gamma^{(\frac{1}{2})}},
\end{align*}
then $T_qT_{\lambda}T_l\sim T_\gamma T_\gamma T_q$ and $T_\gamma\sim (T_\gamma)^*$.
\end{proposition}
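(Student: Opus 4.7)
My plan is to verify both assertions by direct application of the symbolic calculus of Proposition \ref{symbol.pro1}, matching principal and subprincipal symbols of the compositions.

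\emph{Self-adjointness of $T_\gamma$.} I would first dispose of the claim $(T_\gamma)^* \sim T_\gamma$, since it is essentially algebraic. By Proposition \ref{symbol.pro1}(2),
\[
(T_\gamma)^* \sim T_b,\qquad b = \gamma^{(3/2)} + \overline{\gamma^{(1/2)}} + \tfrac{1}{i}(\partial_x \cdot \partial_\xi)\gamma^{(3/2)}.
\]
Because $l^{(2)}$ and $\lambda^{(1)}$ are real-valued, $\gamma^{(3/2)}=\sqrt{l^{(2)}\lambda^{(1)}}$ is real; consequently $\gamma^{(1/2)}=\frac{1}{2i}(\partial_\xi\cdot\partial_x)\gamma^{(3/2)}$ is purely imaginary, so $\overline{\gamma^{(1/2)}}=-\gamma^{(1/2)}$, while $\frac{1}{i}(\partial_x\cdot\partial_\xi)\gamma^{(3/2)} = 2\gamma^{(1/2)}$. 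Hence $b = \gamma^{(3/2)} + \gamma^{(1/2)} = \gamma$.

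\emph{Symmetrization $T_qT_\lambda T_l \sim T_\gamma T_\gamma T_q$.} For this I apply Proposition \ref{symbol.pro1}(1) twice on each side. The operators on both sides have order $3$, so $\sim$ requires agreement of the principal and subprincipal symbols. Using that $q=q^{(0)}$ does not depend on $\xi$, the left-hand side has
\[
\text{principal:}\ q\lambda^{(1)}l^{(2)},\qquad \text{subprincipal:}\ q\lambda^{(0)}l^{(2)} + q\lambda^{(1)}l^{(1)} + \tfrac{q}{i}\partial_\xi\lambda^{(1)}\cdot\partial_x l^{(2)},
\]
while for the right-hand side, expanding $\gamma\sharp\gamma$ first and then composing with $q$ on the right,
\[
\text{principal:}\ q(\gamma^{(3/2)})^2,\qquad \text{subprincipal:}\ 2q\gamma^{(3/2)}\gamma^{(1/2)} + \tfrac{q}{i}\partial_\xi\gamma^{(3/2)}\cdot\partial_x\gamma^{(3/2)} + \tfrac{1}{i}\partial_\xi\bigl((\gamma^{(3/2)})^2\bigr)\cdot\partial_x q.
\]
Since $(\gamma^{(3/2)})^2 = \lambda^{(1)}l^{(2)}$, the principal symbols agree at once.

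For the subprincipal identity, I would substitute $\gamma^{(1/2)} = \frac{1}{2i}(\partial_\xi\cdot\partial_x)\gamma^{(3/2)}$ and $l^{(1)} = -\frac{i}{2}(\partial_x\cdot\partial_\xi)l^{(2)}$, and then use $2\gamma^{(3/2)}\partial\gamma^{(3/2)} = \partial(\lambda^{(1)}l^{(2)})$ (Leibniz on $w^2=l^{(2)}\lambda^{(1)}$) to express everything in terms of $l^{(2)},\lambda^{(1)},q$ and their derivatives. The three $\frac{1}{i}$-terms collect into an expression with a single factor of $q$ or $\partial_x q$; writing $\partial_x q$ explicitly with $q=(1+|\nabla f|^2)^{-1/2}$, the mismatch from the right-hand side is exactly $\frac{1+|\nabla f|^2}{2\lambda^{(1)}}\{\mathrm{div}(\alpha^{(1)}\nabla f) + i\partial_\xi\lambda^{(1)}\cdot\nabla\alpha^{(1)}\} \cdot q l^{(2)}$, which by the formula of Lemma \ref{decomposition1} equals $q\lambda^{(0)}l^{(2)}$. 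All remaining terms cancel.

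\emph{Main obstacle.} The clean part is the self-adjointness and the matching of principal symbols; the only nontrivial step is the subprincipal identity for the symmetrization. Its verification is delicate because several purely imaginary quantities ($\lambda^{(0)}$, $l^{(1)}$, $\gamma^{(1/2)}$) have to balance against each other and against the contribution $\frac{1}{i}\partial_\xi(\lambda^{(1)}l^{(2)})\cdot\partial_x q$ produced by composing $T_q$ on the right. The whole point of the specific choice of $q=(1+|\nabla f|^2)^{-1/2}$ and of the subprincipal symbol $\lambda^{(0)}$ from Lemma \ref{decomposition1} is to engineer this cancellation, exactly as in the Alazard--M\'etivier symmetrization of the Dirichlet--Neumann operator for water waves.
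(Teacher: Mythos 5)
Your proof takes essentially the same route as the paper: reduce $T_qT_\lambda T_l\sim T_\gamma T_\gamma T_q$ and $T_\gamma\sim(T_\gamma)^*$ to symbolic identities via Proposition \ref{symbol.pro1} (noting $\partial_\xi q=0$), verify the self-adjointness algebraically from $\gamma^{(1/2)}$ being purely imaginary, and defer the subprincipal matching to direct symbolic calculation, exactly as the paper does. One small imprecision in your commentary: the expression you write as the leftover is $\lambda^{-(0)}$ from Lemma \ref{decomposition1}, which is not $\lambda^{(0)}=\tfrac{1}{2}(\lambda^{+(0)}+\lambda^{-(0)})=i\,\mathrm{Im}\,\lambda^{-(0)}$ (the latter is purely imaginary while $\lambda^{-(0)}$ generally is not), so the residual term cannot literally be $\lambda^{-(0)}ql^{(2)}$; this is a notational slip rather than a structural flaw, and the cancellation mechanism you describe is the one the paper relies on.
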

\begin{proof}
From Proposition \ref{symbol.pro1}, one can see that proving $T_qT_{\lambda}T_l\sim T_\gamma T_\gamma T_q$ and $T_\gamma\sim (T_\gamma)^*$ is equivalent to showing that
\begin{align*}
q(\lambda\sharp l)+\frac{1}{i}\partial_\xi q\cdot\partial_x(l^{(2)}\lambda^{(1)})
=(\gamma\sharp\gamma)q+\frac{1}{i}\partial_\xi(\gamma^{\frac{3}{2}}\gamma^{\frac{3}{2}})
\cdot\partial_x q,
\end{align*}
and
\begin{align*}
\mathrm{Im} \gamma^{(\frac{1}{2})}=-\frac{1}{2}(\partial_\xi\cdot\partial_x)\gamma^{(\frac{3}{2})},
\end{align*}
where
\begin{align*}
\lambda\sharp l=l^{(2)}\lambda^{(1)}+l^{(1)}\lambda^{(1)}+l^{(2)}\lambda^{(0)}+\frac{1}{i}\partial_\xi
\lambda^{(1)}\cdot\partial_x l^{(2)},
\end{align*}
\begin{align*}
\gamma\sharp\gamma=(\gamma^{(\frac{3}{2})})^2+2\gamma^{(\frac{1}{2})}\gamma^{(\frac{3}{2})}
+\frac{1}{i}\partial_\xi\gamma^{(\frac{3}{2})}\cdot\partial_x\gamma^{(\frac{3}{2})}.
\end{align*}
The above equalities can be easily verified by direct symbolic calculation.
\end{proof}
We introduce the paradifferential operator $T_\beta$ with the symbol
\begin{align*}
\beta:=(\gamma^{(\frac{3}{2})})^{\frac{2s-1}{3}}\in \Sigma^{s-1/2}.
\end{align*}
\begin{lemma}\label{symbol.lem2}
For all $\mu\in\mathbb{R}$, there exists an non-decreasing function C, such that
\begin{align*}
\|[T_\beta,T_\gamma]\|_{H^{\mu+s-1}\rightarrow H^{\mu}}\le C(L_1).
\end{align*}
\end{lemma}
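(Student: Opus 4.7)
The plan is to apply the symbolic calculus from Proposition \ref{symbol.pro1} to the product $T_\beta T_\gamma$ and its reverse $T_\gamma T_\beta$, and to show that the composite symbols $\beta \sharp \gamma$ and $\gamma \sharp \beta$ agree in $\Sigma^{s+1}$, so that the commutator equals a difference of two remainders, each of order $(s-1/2)+(3/2)-2 = s-1$.

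First I would note that because $\beta=(\gamma^{(3/2)})^{(2s-1)/3}$ depends only on $(\nabla f,\xi)$ and contains no second-derivative-of-$f$ piece, it lies in $\Sigma^{s-1/2}$ with $\beta^{(s-3/2)}\equiv0$. Applying the composition formula
\begin{align*}
a \sharp b = a^{(m)}b^{(m')} + a^{(m-1)}b^{(m')} + a^{(m)}b^{(m'-1)} + \tfrac{1}{i}\partial_\xi a^{(m)}\cdot \partial_x b^{(m')}
\end{align*}
to both orderings, the principal products $\beta^{(s-1/2)}\gamma^{(3/2)}$ and the mixed terms $\beta^{(s-1/2)}\gamma^{(1/2)}$ are scalar-valued and commute, so
\begin{align*}
\beta\sharp\gamma - \gamma\sharp\beta
= \tfrac{1}{i}\bigl(\partial_\xi\beta^{(s-1/2)}\cdot\partial_x\gamma^{(3/2)} - \partial_\xi\gamma^{(3/2)}\cdot\partial_x\beta^{(s-1/2)}\bigr).
\end{align*}

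The key step is to observe that $\beta^{(s-1/2)}$ is a smooth function of $\gamma^{(3/2)}$ alone, so the chain rule gives
\begin{align*}
\partial_\xi\beta^{(s-1/2)} = \tfrac{2s-1}{3}(\gamma^{(3/2)})^{\frac{2s-4}{3}}\partial_\xi\gamma^{(3/2)},\qquad
\partial_x\beta^{(s-1/2)} = \tfrac{2s-1}{3}(\gamma^{(3/2)})^{\frac{2s-4}{3}}\partial_x\gamma^{(3/2)},
\end{align*}
and the Poisson bracket above vanishes identically. Hence $\beta\sharp\gamma = \gamma\sharp\beta$, so
\begin{align*}
[T_\beta,T_\gamma] = \bigl(T_\beta T_\gamma - T_{\beta\sharp\gamma}\bigr) - \bigl(T_\gamma T_\beta - T_{\gamma\sharp\beta}\bigr),
\end{align*}
and each parenthesis is an operator of order $s-1$ by the $\sim$-relation in Proposition \ref{symbol.pro1}.

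To convert the abstract order-$(s-1)$ bound into the quantitative estimate $\le C(L_1)$, I would track the constants in the paradifferential remainder estimates. These depend on a finite number of seminorms of the symbols $\beta$ and $\gamma$ in $(\zeta,\xi)$, combined with $W^{k,\infty}$-norms of $\nabla f$ and $H^{s+1/2}$-norms of $f$ (since $\gamma^{(1/2)}$ involves one derivative of $\gamma^{(3/2)}$, hence $\partial^2 f$). All of these are controlled by $L_1$ via the hypotheses $\|f\|_{W^{2,\infty}}\le L_0\le L_1$ and $\|f\|_{H^{s+1/2}}\le L_1$. The main obstacle is purely bookkeeping: one has to justify that, although $\gamma\in\Sigma^{3/2}$ really does carry the nontrivial subprincipal piece $\gamma^{(1/2)}$ that involves $\partial^2 f$ through the Definition \ref{DefSymbol1}, this piece does not obstruct the cancellation above because it enters $\beta\sharp\gamma$ and $\gamma\sharp\beta$ only through scalar multiplication by $\beta^{(s-1/2)}$. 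Once this is in place, the bound $\|[T_\beta,T_\gamma]\|_{H^{\mu+s-1}\to H^\mu}\le C(L_1)$ follows immediately.
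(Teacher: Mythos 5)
Your proof is correct and takes essentially the same approach as the paper: both observe that $\beta$ is a smooth function of $\gamma^{(3/2)}$ alone, so the Poisson bracket $\partial_\xi\beta\cdot\partial_x\gamma^{(3/2)}-\partial_\xi\gamma^{(3/2)}\cdot\partial_x\beta$ vanishes by the chain rule, and then apply Proposition \ref{symbol.pro1} to conclude that the commutator is of order $s-1$. You have simply spelled out the cancellation $\beta\sharp\gamma=\gamma\sharp\beta$ and the dependence of constants on $L_1$ in more detail than the paper's terse version.
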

\begin{proof}
From the definition, we have
$$\partial_\xi \beta \cdot \partial_x \gamma^{(\frac{3}{2})}=\partial_\xi\gamma^{(\frac{3}{2})}\cdot \partial_x \beta.$$
Thus, one can arrive at the result of this lemma by using Proposition \ref{symbol.pro1}.
\end{proof}
It is clear that $T_\beta$ is an elliptic operator, whose commutator with $T_\gamma$ is better than $\langle\nabla\rangle^{s-1}$, and we will use it to obtain estimates in Sobolev spaces. This is the reason we introduce such operator.

At the end of this subsection, we present some properties that will be useful in proving energy estimates.
\begin{lemma}\label{symbol.lem1}
For all $\mu\in \mathbb R$, it holds that
\begin{align}
\|T_{\partial_t q}\|_{H^\mu\rightarrow H^\mu}+\|T_{\partial_t \beta}\|_{H^\mu\rightarrow H^{\mu-s+\frac{1}{2}}}+\|T_{\partial_t \gamma}\|_{H^\mu\rightarrow H^{\mu-\frac{3}{2}}}\le C(L_1),
\end{align}
\begin{align}
\|T_{\partial^2_t q}\|_{H^\mu\rightarrow H^\mu}+\|T_{\partial^2_t \beta}\|_{H^\mu\rightarrow H^{\mu-s+\frac{1}{2}}}+\|T_{\partial^2_t \gamma}\|_{H^\mu\rightarrow H^{\mu-\frac{3}{2}}}\le C(L_1,L_2),
\end{align}
\begin{align}
\|T_{\partial_i q}\|_{H^\mu\rightarrow H^\mu}+\|T_{\partial_i \beta}\|_{H^\mu\rightarrow H^{\mu-s+\frac{1}{2}}}+\|T_{\partial_i \gamma}\|_{H^\mu\rightarrow H^{\mu-\frac{3}{2}}}\le C(L_1).
\end{align}
\end{lemma}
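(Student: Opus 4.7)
The three estimates are direct applications of the standard continuity theorem for paradifferential operators: if $a(x,\xi)$ is a symbol of order $m$ whose coefficients lie in $W^{k,\infty}(\mathbb{T}^2)$ for a sufficiently large but fixed $k$, then $T_a\colon H^\mu \to H^{\mu-m}$ is bounded by a corresponding $L^\infty$-based seminorm of $a$ (exactly the kind of estimate used in the proof of Proposition~\ref{symbol.pro1}). Since $\partial_t$ and $\partial_i$ differentiate only the coefficients of $q$, $\beta$, $\gamma$ and not their $\xi$-dependence, the orders $0$, $s-\tfrac12$, and $\tfrac32$ are preserved, so the mapping orders stated in the lemma match; the task is to bound the coefficients appearing in each differentiated symbol in the requisite $W^{k,\infty}$-norm.

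For the first estimate, the chain rule gives that $\partial_t q$, $\partial_t\gamma^{(3/2)}$, and $\partial_t\beta$ are smooth functions of $\nabla f$ (and $\xi$) multiplied by $\nabla \partial_t f$, while $\partial_t\gamma^{(1/2)}$, being of subprincipal order, involves at most $\nabla^2\partial_t f$. Using the Sobolev embedding $H^{s-1/2}(\mathbb{T}^2)\hookrightarrow W^{k,\infty}(\mathbb{T}^2)$ valid for $k<s-\tfrac32$ (in particular $k=3$ when $s\ge 6$), one has $\|\partial_t f\|_{W^{3,\infty}}\le C\|\partial_t f\|_{H^{s-1/2}}\le CL_1$, so every coefficient is controlled by $C(L_1)$. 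The third estimate is identical, with $\partial_t f$ replaced by $f$ and $\|f\|_{H^{s+1/2}}\le L_1$ (together with $\|f\|_{W^{2,\infty}}\le L_0$) used instead.

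For the second estimate, the additional input required is a bound on $\partial_t^2 f$ in a suitable $W^{k,\infty}$-norm, which is not an immediate hypothesis. Differentiating the kinematic identity $\partial_t f=\theta=\underline{\vu}\cdot\mathbf{N}_f$ once more in time gives
\begin{equation*}
\partial_t^2 f=(\partial_t\underline{\vu})\cdot\mathbf{N}_f-\sum_{i=1,2}\underline{u_i}\,\partial_i\partial_t f,
\end{equation*}
hence $\|\partial_t^2 f\|_{W^{1,\infty}}\le C(L_0,L_1,L_2)$ by the hypotheses $\|\partial_t\underline{\vu}\|_{W^{1,\infty}}\le L_2$, $\|\underline{\vu}\|_{W^{1,\infty}}\le L_0$, and the bound $\|\partial_t f\|_{W^{2,\infty}}\le CL_1$ just established. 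Inserting this in the chain-rule expressions for $\partial_t^2 q$, $\partial_t^2\beta$, $\partial_t^2\gamma$ (which also involve quadratic terms in $\partial_t\nabla f$, already controlled by $L_1$) and invoking the paradifferential continuity theorem yields the bound $C(L_1,L_2)$.

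The argument is free of conceptual difficulty; the main obstacle is the bookkeeping of counting, for each symbol, the maximal order of $x$- and $t$-derivatives of $f$ that appear after differentiation, and verifying case by case that the required Sobolev norm is furnished by the hypotheses. For $s\ge 6$ there is a comfortable margin, so no sharp counting is needed.
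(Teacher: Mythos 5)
Your argument follows the same route as the paper: invoke the paradifferential boundedness theorem (Proposition~\ref{para1}) together with the observation that differentiating the coefficients in $t$ or $x_i$ does not change the $\xi$-order, then control the resulting symbol seminorms $M_0^m$ by $W^{k,\infty}$-norms of $f$, $\partial_t f$, $\partial_t^2 f$, which Sobolev embedding reduces to the given Sobolev bounds. The paper proves only the first inequality explicitly and dismisses the other two with ``the other estimates can be obtained in the same way''; you go further in pointing out, correctly, that the second estimate requires a bound on $\partial_t^2 f$ that is not among the Section~4 hypotheses and must be derived from the kinematic boundary condition $\partial_t f=\underline{\vu}\cdot\mathbf{N}_f$. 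That extra step is a genuine and useful elaboration of what the paper leaves implicit.

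Two small cautions. First, be careful about the notation: the hypothesis controls $\underline{\partial_t\vu}$, the trace of the time derivative, whereas the time derivative of the trace is $\partial_t\underline{\vu}=\underline{\partial_t\vu}+\underline{\partial_3\vu}\,\partial_t f$; the correction term is still bounded via $\|\vu\|_{H^s(\Omega_f^\pm)}\le L_1$ and $\|\partial_t f\|_{H^{s-1/2}}\le L_1$, so the conclusion stands, but your formula should not silently conflate the two. Second, the subprincipal part $\gamma^{(1/2)}$ involves $\nabla^2 f$, so $\partial_t^2\gamma^{(1/2)}$ involves $\nabla^2\partial_t^2 f$, and the $W^{1,\infty}$ bound you derive for $\partial_t^2 f$ leaves this term just out of reach; strictly one needs another derivative's worth of control on the trace of $\partial_t\vu$ (or one can note that $T_{\partial_t^2\gamma^{(1/2)}}$ is of order $1/2$, hence already better than the claimed order $3/2$, so only the principal part actually needs the displayed norm). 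The paper's terse proof has the same latent issue, so this is a caveat rather than a defect specific to your write-up.
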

\begin{proof}
Recalling the expression of $q, \beta, \gamma$, one can easily verify that $\partial_t q\in \Gamma^0_0$, $\partial_t \beta\in \Gamma^{s-\frac{1}{2}}_0$ and $\partial_t \gamma\in \Gamma^{\frac{3}{2}}_0$. The definition of $\Gamma^m_\rho$ is given in Appendix A. Then, with the help of Proposition \ref{para1} and Sobolev embedding, we deduce that
\begin{align*}
\|T_{\partial_t q}\|_{H^\mu\rightarrow H^\mu}+\|T_{\partial_t \beta}\|_{H^\mu\rightarrow H^{\mu-s+\frac{1}{2}}}+\|T_{\partial_t \gamma}\|_{H^\mu\rightarrow H^{\mu-\frac{3}{2}}}\le C(M_0^0(\partial_t q),M_0^{s-\frac{1}{2}}(\partial_t \beta),M_0^{\frac{3}{2}}(\partial_t \gamma))\\
\le C(\|f\|_{W^{2,\infty}},\|\partial_t f\|_{W^{2,\infty}})\le C(\|f\|_{H^{4}},\|\partial_t f\|_{H^4})\le C(L_1).
\end{align*}
The other estimates can be obtained in the same way.
\end{proof}
\begin{lemma}\label{symbol.lem3}
For all function $a\in H^{s-\frac{1}{2}}$ and $\psi\in H^{s-\frac{3}{2}}$, it holds that
\begin{align*}
\|T_\beta[T_q,a]\psi\|_{L^2}\le C(L_1)\|a\|_{H^{s-\frac{1}{2}}}\|\psi\|_{H^{s-\frac{3}{2}}},\\
\|[T_\beta,a]T_q\psi\|_{L^2}\le C(L_1)\|a\|_{H^{s-\frac{1}{2}}}\|\psi\|_{H^{s-\frac{3}{2}}},\\
\|[T_\gamma,a]\psi\|_{L^2}\le C(L_1)\|a\|_{H^{s-\frac{1}{2}}}\|\psi\|_{H^{\frac{1}{2}}}.
\end{align*}
\begin{proof}
From Lemma \ref{symbol.est2}, we have
$$\|T_\beta[T_q,a]\psi\|_{L^2}\le C(L_1)\|[T_q,a]\psi\|_{H^{s-\frac{1}{2}}}.$$
By using Bony's decomposition, we rewrite $[T_q,a]\psi$ as
\begin{align*}
[T_q,a]\psi=&T_q(a\psi)-aT_q\psi
=T_qT_a\psi+T_qT_\psi a+T_q\mathcal{R}_{\mathcal{B}}(a,\psi)
-T_aT_q\psi-T_{T_q \psi}a -\mathcal{R}_{\mathcal{B}}(a,T_q\psi)\\
=&[T_q,T_a]\psi+T_qT_\psi a-T_{T_q \psi}a+T_q\mathcal{R}_{\mathcal{B}}(a,\psi)-\mathcal{R}_{\mathcal{B}}(a,T_q\psi).
\end{align*}
With the help of Lemma \ref{lem-remainder}, Propositon \ref{adjoint}, Lemma \ref{symbol.est2}, and Proposition \ref{symbol.est3}, one can deduce that
\begin{align*}
&\|[T_q,T_a]\psi\|_{H^{s-\frac{1}{2}}}\le C(\|f\|_{W^{2,\infty}})\|a\|_{W^{1,\infty}}\|\psi\|_{H^{s-\frac{3}{2}}}\le
C(L_1)\|a\|_{H^{s-\frac{1}{2}}}\|\psi\|_{H^{s-\frac{3}{2}}},\\
&\|T_qT_\psi a\|_{H^{s-\frac{1}{2}}}\le C(\|f\|_{W^{1,\infty}})\|\psi\|_{L^\infty}\|a\|_{H^{s-\frac{1}{2}}}\le
C(L_1)\|a\|_{H^{s-\frac{1}{2}}}\|\psi\|_{H^{s-\frac{3}{2}}},\\
&\|T_{T_q \psi}a\|_{H^{s-\frac{1}{2}}}\le C\|T_q \psi\|_{L^\infty}\|a\|_{H^{s-\frac{1}{2}}}\le C\|T_q \psi\|_{H^2}\|a\|_{H^{s-\frac{1}{2}}}\le
C(L_1)\|a\|_{H^{s-\frac{1}{2}}}\|\psi\|_{H^{s-\frac{3}{2}}},\\
&\|T_q\mathcal{R}_{\mathcal{B}}(a,\psi)\|_{H^{s-\frac{1}{2}}}\le C(\|f\|_{W^{1,\infty}})\|\mathcal{R}_{\mathcal{B}}(a,\psi)\|_{H^{s-\frac{1}{2}}}\le C(L_1)\|a\|_{H^{s-\frac{1}{2}}}\|\psi\|_{H^{s-\frac{3}{2}}},\\
&\|\mathcal{R}_{\mathcal{B}}(a,T_q\psi)\|_{H^{s-\frac{1}{2}}}\le C\|a\|_{H^{s-\frac{1}{2}}}\|T_q\psi\|_{H^{s-\frac{3}{2}}}\le C(L_1)\|a\|_{H^{s-\frac{1}{2}}}\|\psi\|_{H^{s-\frac{3}{2}}}.
\end{align*}
Combining the above estimates yields
\begin{align*}
	\|T_{\partial_t q}\|_{H^\mu\rightarrow H^\mu}+\|T_{\partial_t \beta}\|_{H^\mu\rightarrow H^{\mu-s+\frac{1}{2}}}+\|T_{\partial_t \gamma}\|_{H^\mu\rightarrow H^{\mu-\frac{3}{2}}}\le C(L_1).
\end{align*}
The other two inequalities of this lemma can be proved in a similar way.
\end{proof}
\end{lemma}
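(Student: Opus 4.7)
The plan is to establish each of the three inequalities by Bony's paraproduct decomposition $au = T_au + T_ua + \mathcal{R}_{\mathcal{B}}(a,u)$ combined with the paradifferential symbolic calculus of Proposition~\ref{symbol.pro1}, using that $T_\beta : H^{s-1/2}\to L^2$ has norm $\le C(L_1)$, $T_q\in \Sigma^0$, $T_\gamma\in\Sigma^{3/2}$, and that the remainder bilinear operators gain maximal regularity.

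For the first inequality I would insert Bony on both sides of $[T_q,a]\psi = T_q(a\psi) - aT_q\psi$, obtaining
\begin{align*}
[T_q,a]\psi = [T_q,T_a]\psi + T_qT_\psi a - T_{T_q\psi}a + T_q\mathcal{R}_{\mathcal{B}}(a,\psi) - \mathcal{R}_{\mathcal{B}}(a,T_q\psi).
\end{align*}
Since $T_\beta$ is bounded $H^{s-1/2}\to L^2$, it suffices to estimate each of these five pieces in $H^{s-1/2}$. The commutator $[T_q,T_a]$ gains one derivative by the symbolic calculus and hence maps $H^{s-3/2}\to H^{s-1/2}$ with norm controlled by $\|a\|_{W^{1,\infty}}\le C\|a\|_{H^{s-1/2}}$; the two ``reverse'' paraproducts $T_qT_\psi a$ and $T_{T_q\psi}a$ are handled by placing $\psi$ or $T_q\psi$ in $L^\infty$ (available since $H^{s-3/2}\hookrightarrow L^\infty$ in two dimensions for $s\ge 6$) and $a$ in $H^{s-1/2}$; the two remainder terms gain the necessary regularity and are bounded by $\|a\|_{H^{s-1/2}}\|\psi\|_{H^{s-3/2}}$.

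For the second inequality I would proceed identically on $[T_\beta,a]T_q\psi = T_\beta(aT_q\psi) - aT_\beta T_q\psi$: Bony decomposition applied to both products reduces the leading term to $[T_\beta,T_a]T_q\psi$, and by Proposition~\ref{symbol.pro1} this commutator has order $s-3/2$ (one less than $s-1/2$), hence maps $H^{s-3/2}\to L^2$ with norm controlled by $\|a\|_{H^{s-1/2}}$. The residual paraproducts and remainders produced by the expansion are treated exactly as in the first case. The third inequality is analogous: Bony expansion of $a\psi$ inside $[T_\gamma,a]\psi$ reduces matters to $[T_\gamma,T_a]\psi$, which has order $1/2$ and therefore maps $H^{1/2}\to L^2$ with norm $\lesssim\|a\|_{W^{1,\infty}}\lesssim \|a\|_{H^{s-1/2}}$, while $T_\gamma T_\psi a$, $T_{T_\gamma\psi}a$ and the two remainders are bounded by the product $\|a\|_{H^{s-1/2}}\|\psi\|_{H^{1/2}}$ after noting that $T_\gamma$ costs exactly $3/2$ derivatives, which are absorbed into the higher regularity of $a$.

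The main obstacle will be the bookkeeping of orders in the second inequality, where both $\beta$ and $a$ carry the maximal available regularity: the required $L^2$ control must come from the single order of gain in the subprincipal symbol of $[T_\beta,T_a]$, and one has to simultaneously verify that $T_q\psi\in H^{s-3/2}\hookrightarrow L^\infty$ so that the paraproduct $T_{T_q\psi}(\cdot)$ is a bounded operator on $H^{s-1/2}$. The remainder term $\mathcal{R}_{\mathcal{B}}(a,T_q\psi)$ is the only place where the full $\|a\|_{H^{s-1/2}}$ norm is essential, rather than a Lipschitz bound on $a$.
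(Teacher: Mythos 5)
Your proof follows exactly the paper's strategy: reduce via the boundedness of $T_\beta$ (resp.\ $T_\gamma$) and apply Bony's decomposition to $[T_q,a]\psi$ to obtain the same five-term expansion, bounding the commutator via symbolic calculus, the reverse paraproducts via Sobolev embedding into $L^\infty$, and the remainders via Lemma \ref{lem-remainder}. The additional detail you supply for the second and third inequalities, which the paper dispatches with ``in a similar way,'' is consistent with the paper's argument and the order bookkeeping checks out.
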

\subsection{Linearized System of $(f,\theta)$}
In this subsection, we linearize the system of $(f,\theta)$, and give it's energy estimates. From (\ref{system1}) and \eqref{eq-paraline}, we derive the following linearized system
\begin{equation}\label{system2}
\begin{aligned}
\partial_t \bar{f}=&\bar{\theta}\\
\partial_t \bar{\theta}=&-\frac{\sigma}{2}(T_{\lambda }T_{l}\bar{f})
-((\underline{u}_1^+ +\underline{u}_1^-)\partial_1 \bar{\theta}+(\underline{u}_2^+ +\underline{u}_2^-)\partial_2 \bar{\theta})\\
&-\frac{1}{2}\sum_{i,j=1,2}( \underline{u}_i^+\underline{u}_j^+ -\underline{h}_i^+\underline{h}_j^+ + \underline{u}_i^-\underline{u}_j^- -\underline{h}_i^-\underline{h}_j^- )\partial_i\partial_j  \bar{f}+\mathfrak{g}+\frac{\sigma}{4} R,
\end{aligned}
\end{equation}
where
\begin{equation}
\begin{aligned}
\mathfrak{g}=&\frac{1}{2}(\mathcal{N}_f^+ -\mathcal{N}_f^-)\widetilde{\mathcal{N}_f}^{-1}\mathcal{P}(\sum_{i,j=1,2}(\underline{u}_i^+\underline{u}_j^+-\underline{h}_i^+\underline{h}_j^+ -\underline{u}_i^-\underline{u}_j^-+\underline{h}_i^-\underline{h}_j^- )\partial_i\partial_j  f)\\
&+(\mathcal{N}_f^+ -\mathcal{N}_f^-)\widetilde{\mathcal{N}_f}^{-1}\mathcal{P}
((\underline{u}_1^+ - \underline{u}_1^-)\partial_1 \theta+(\underline{u}_2^+ - \underline{u}_2^-)\partial_2 \theta)\\
&-\frac{1}{2}\mathbf{N}_f\cdot\underline{\nabla( p_{\textbf{u}^{+},\textbf{u}^{+}}-p_{\textbf{h}^{+},\textbf{h}^{+}})}
-\frac{1}{2}\mathbf{N}_f\cdot\underline{\nabla( p_{\textbf{u}^{-},\textbf{u}^{-}}-p_{\textbf{h}^{-},\textbf{h}^{-}})}\\
&+\frac{1}{2}(\mathcal{N}_f^+ -\mathcal{N}_f^-)\widetilde{\mathcal{N}_f}^{-1}\mathcal{P}\mathbf{N}_f\cdot
\underline{\nabla(p_{\textbf{u}^{+},\textbf{u}^{+}}-p_{\textbf{h}^{+},\textbf{h}^{+}}-
p_{\textbf{u}^{-},\textbf{u}^{-}}+p_{\textbf{h}^{-},\textbf{h}^{-}})}\\
&-\frac{\sigma}{4}(\mathcal{N}^+_f-\mathcal{N}^-_f)
\widetilde{\mathcal{N}_f}^{-1}(\mathcal{N}^+_f H(f)-\mathcal{N}^-_f H(f))\\
\overset{\text{def}}{=}&\mathfrak{g}_1+\mathfrak{g}_2+\mathfrak{g}_3+\mathfrak{g}_4+\mathfrak{g}_5,
\end{aligned}
\end{equation}
and
\begin{align*}
	R=&T_{\lambda^+}r_2+R^+_1(f,H(f))+r_1^+(f,H(f))+T_{\lambda^-}r_2+R^-_1(f,H(f))+r_1^-(f,H(f)).
\end{align*}
We emphasize that all the paradifferential operators $T_{\lambda^\pm}$, $T_l$, and the remainders $r_1^\pm$, $R_1^\pm$, $r_2$ here are defined by the given function $f$.

Defining
$ w_i=\frac{1}{2}(\underline{u}_i^+ + \underline{u}_i^-)$, and $v_i=\frac{1}{2}(\underline{u}_i^+ -\underline{u}_i^-)$, we rewrite the linearized system as:
\begin{equation}\label{system3}
\begin{aligned}
\partial^2_t \bar{f}=&-\frac{\sigma}{2}(T_{\lambda }T_{l}\bar{f})-2\sum_{i,j=1,2} w_i\partial_i\partial_t\bar{f}\\
&+\frac{1}{2}\sum_{i,j=1,2}( -2 w_i w_j-2v_iv_j +\underline{h}_i^+\underline{h}_j^+ +\underline{h}_i^-\underline{h}_j^- )\partial_i\partial_j  \bar{f}+\mathfrak{g}+\frac{\sigma}{4} R.
\end{aligned}
\end{equation}
We remark that $\int_{\bbT^2}\pa_t \bar f dx'$ may not vanish since we have performed the linearization. For this linear system, we have the following energy estimate.
\begin{proposition}\label{pro-f}
Assume $s\ge 6$, given initial data
$(\bar{\theta}_0,\bar{f}_0)\in H^{s-\frac{1}{2}}\times H^{s+1}(\mathbb{T}^2)$, there exists a unique solution $(\bar{\theta}, \bar{f})\in C\Big([0,T];H^{s-\frac{1}{2}}\times H^{s+1}(\mathbb{T}^2)\Big)$ to the system (\ref{system2}) from $(\bar{\theta}_0,\bar{f}_0)$ so that
\begin{equation*}
\begin{aligned}
&\sup_{t\in [0,T]}(\| \partial_t \bar{f} \|_{H^{s-\frac{1}{2}}}^2+\| \bar{f} \|_{H^{s+\frac{1}{2}}}^2+\sigma\| \bar{f} \|_{H^{s+1}}^2)\\
\le& C(\sigma,L_0)(\| \bar{\theta}_0 \|_{H^{s-\frac{1}{2}}}^2+\| \bar{f}_0 \|_{H^{s+\frac{1}{2}}}^2+\sigma\| \bar{f}_0 \|_{H^{s+1}}^2+\int_0^T \|\mathfrak{g}\|^2_{H^{s-\frac{1}{2}}}+\sigma^2\|R\|^2_{H^{s-\frac{1}{2}}}d\tau)e^{C(c_0,L_1,L_2)T}.
\end{aligned}
\end{equation*}
\end{proposition}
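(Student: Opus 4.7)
The plan is to derive an \emph{a priori} bound for the linearized system \eqref{system3} by constructing a symmetrized, weighted energy and then to obtain existence and uniqueness via a Friedrichs regularization based on the same estimate. All the algebraic machinery needed for the a priori estimate has already been assembled in the preceding subsection: the paralinearization of $\mathcal{N}_f^\pm H(f)$ in \eqref{eq-paraline}, the symmetrization $T_qT_\la T_l\sim T_\g T_\g T_q$ and the self-adjointness $T_\g\sim T_\g^*$ from Proposition \ref{symbol.pro4}, the low-order commutator bound for $[T_\b,T_\g]$ from Lemma \ref{symbol.lem2}, and the time-derivative and multiplication controls of Lemmas \ref{symbol.lem1}--\ref{symbol.lem3}.

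Following the strategy of \cite{ABZ1}, I would introduce the symmetrized energy
\begin{equation*}
\cE(t):=\|T_\b T_q\pa_t\bar f\|_{L^2}^2+\frac{\s}{2}\|T_\b T_\g T_q\bar f\|_{L^2}^2.
\end{equation*}
Since $T_\b\in\Sigma^{s-1/2}$, $T_q\in\Sigma^0$ and $T_\g\in\Sigma^{3/2}$ are all elliptic, $T_\b T_q$ and $T_\b T_\g T_q$ are elliptic of order $s-\frac12$ and $s+1$ respectively, so $\cE(t)$ is equivalent (up to controllable lower-order tails) to $\|\pa_t\bar f\|_{H^{s-1/2}}^2+\s\|\bar f\|_{H^{s+1}}^2$. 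The intermediate norm $\|\bar f\|_{H^{s+1/2}}^2$ appearing on the left-hand side is then recovered from $\s\|\bar f\|_{H^{s+1}}^2$ via the trivial interpolation $\|\bar f\|_{H^{s+1/2}}\le \s^{-1/2}(\s\|\bar f\|_{H^{s+1}}^2)^{1/2}$, with the $\s^{-1/2}$ absorbed into $C(\s,L_0)$. At $t=0$, $\cE(0)$ is bounded by $\|\bar\theta_0\|_{H^{s-1/2}}^2+\s\|\bar f_0\|_{H^{s+1}}^2$, matching the right-hand side.

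The heart of the argument is computing $\frac{d}{dt}\cE$ and exploiting the cancellation produced by the symmetrization. Substituting $\pa_t^2\bar f$ from \eqref{system3}, the principal third-order contribution to $\frac{d}{dt}\|T_\b T_q\pa_t\bar f\|_{L^2}^2$ is
\begin{equation*}
-\s\,\mathrm{Re}\bigl(T_\b T_q\pa_t\bar f,\,T_\b T_q T_\la T_l\bar f\bigr).
\end{equation*}
I would rewrite $T_qT_\la T_l$ as $T_\g T_\g T_q$ modulo an operator of order $\le 1$ (Proposition \ref{symbol.pro4}), then commute the leftmost $T_\g$ past $T_\b$ using the gain of Lemma \ref{symbol.lem2}, transfer the remaining $T_\g$ to the left slot using $T_\g\sim T_\g^*$, and commute $T_\g$ past $T_\b$ once more. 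Each step generates remainders acting on $\bar f$ by operators of order at most $s+\frac12$, controllable in the $L^2$ pairing by $C(L_1)\cE$ after the $\s^{-1/2}$ interpolation. The main term becomes
\begin{equation*}
-\s\,\mathrm{Re}\bigl(T_\b T_\g T_q\pa_t\bar f,\,T_\b T_\g T_q\bar f\bigr)=-\frac{\s}{2}\frac{d}{dt}\|T_\b T_\g T_q\bar f\|_{L^2}^2+\text{contributions of }T_{\pa_t\b},T_{\pa_t\g},T_{\pa_t q},
\end{equation*}
where the last group is bounded by Lemma \ref{symbol.lem1}. This precisely cancels the time derivative of the second piece of $\cE$.

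The subprincipal terms are classical. The transport term $-2w_i\pa_i\pa_t\bar f$ is handled by commuting $T_\b T_q$ past the vector field $w\cdot\na$ via Lemma \ref{symbol.lem3} and integrating by parts, yielding $\le C(L_0,L_1)\cE$; the order-$2$ spatial coefficient term is subcritical and absorbs into $C(L_0,L_1)(\cE+\|\bar f\|_{H^{s+1/2}}^2)$; and the sources $\mathfrak g$ and $\frac{\s}{4}R$ contribute, by Cauchy--Schwarz, integrals of $\|\mathfrak g\|_{H^{s-1/2}}^2+\s^2\|R\|_{H^{s-1/2}}^2$. Assembling everything produces
\begin{equation*}
\frac{d}{dt}\cE\le C(c_0,L_1,L_2)\cE+C(\s,L_0)\bigl(\|\mathfrak g\|_{H^{s-1/2}}^2+\s^2\|R\|_{H^{s-1/2}}^2\bigr),
\end{equation*}
and Gronwall's lemma yields the claimed bound. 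Existence and uniqueness of $(\bar f,\bar\theta)$ in the stated space then follow by applying the same estimate to a Friedrichs-mollified version of \eqref{system2} and passing to the limit. The principal technical obstacle will be the careful bookkeeping of commutator remainders throughout the symmetrization: each replacement must be shown to produce only operators which, when applied to $\bar f$, leave behind derivatives of order at most $s+\frac12$, so that the resulting $L^2$ pairings can be closed by $\cE$ through the $\s^{-1/2}$ interpolation; this is the structural reason for the specific choice of the elliptic weight $T_\b$ at order $s-\frac12$.
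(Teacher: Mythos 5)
Your overall framework---a $T_\beta T_q$-weighted modified energy, the symmetrization $T_qT_\lambda T_l\sim T_\gamma T_\gamma T_q$ together with $T_\gamma\sim(T_\gamma)^*$ from Proposition \ref{symbol.pro4}, commuting $T_\gamma$ past $T_\beta$ via Lemma \ref{symbol.lem2}, and closing by Gronwall---matches the paper's proof in its broad strokes. There is, however, a genuine gap in your treatment of the second-order coefficient term
\begin{equation*}
\frac{1}{2}\sum_{i,j=1,2}\bigl(-2w_iw_j-2v_iv_j+\underline h_i^+\underline h_j^+ +\underline h_i^-\underline h_j^-\bigr)\partial_i\partial_j\bar f.
\end{equation*}
You assert that this term is ``subcritical'' and absorbs into $C(L_0,L_1)(\cE+\|\bar f\|_{H^{s+1/2}}^2)$, but it is not. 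After commuting $T_\beta T_q$ past the coefficients (Lemma \ref{symbol.lem3}) and integrating by parts once, the pairing takes the form $-\langle a_{ij}\partial_j T_\beta T_q\bar f,\partial_i T_\beta T_q\partial_t\bar f\rangle$: the first slot lives in $H^{1/2}$ at best (from $\sigma\|\bar f\|_{H^{s+1}}^2\lesssim\cE$, at the cost of a factor $\sigma^{-1/2}$), while the second slot lives only in $H^{-1}$ (from $\|\partial_t\bar f\|_{H^{s-1/2}}^2\lesssim\cE$). The pairing is thus half a derivative short of being a valid $L^2$ duality, and no amount of $\sigma$-dependence in the constant can repair this, since the mismatch is structural rather than a matter of constants.

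The paper closes this term by exploiting the fact that the coefficient matrix is a signed sum of rank-one matrices $a_i a_j$ with $a\in\{w,v,\underline h^+,\underline h^-\}$. For each rank-one block it uses the identity
\begin{equation*}
\langle a_i a_j\,\partial_i\partial_j T_\beta T_q\bar f,\ (\partial_t+w_k\partial_k)T_\beta T_q\bar f\rangle=-\frac{1}{2}\frac{d}{dt}\| a_i\partial_i T_\beta T_q\bar f\|_{L^2}^2+R_2,
\end{equation*}
so that the second-order term is not a remainder to be absorbed but an \emph{exact time derivative} that must be built into the energy. This is precisely why the paper's $E_2$ contains the pieces $\| v_i\partial_i T_\beta T_q\bar f\|_{L^2}^2$ and $\|\underline h_i^\pm\partial_i T_\beta T_q\bar f\|_{L^2}^2$, which your $\cE$ omits, and why it uses the material derivative $(\partial_t+w_i\partial_i)T_\beta T_q\bar f$ rather than $T_\beta T_q\partial_t\bar f$: the cross-term generated by that choice cancels the $w_iw_j$ block. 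A further consequence you would have to address is that the $v_iv_j$ block enters $E_2$ with a \emph{negative} sign, so the resulting energy need not be coercive; the paper restores coercivity by adding $C(\sigma)\|\bar f\|_{L^2}^2$, and this extra $\sigma$-dependent tail is exactly the source of the $C(\sigma,L_0)$ prefactor in the stated bound. Until the rank-one decomposition, the extra energy pieces, and the coercivity fix are all incorporated, your energy argument does not close.
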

\begin{proof}
We only present the uniform estimates, which ensure the existence and uniqueness of the solution. For convenience, we put all the terms can be bounded by $C(L_0,L_1,L_2)\|\bar{f}\|^2_{H^{s+1}}$ in $R_1$, and terms that can be bounded by $C(L_0,L_1,L_2)(\|\bar{f}\|^2_{H^{s+\frac{1}{2}}}+\|\partial_t\bar{f}\|^2_{H^{s-\frac{1}{2}}})$ in $R_2$. We start the energy estimates from $\frac{1}{2}\partial_t\langle(\partial_t+ w_i\partial_i )T_\beta T_q  \bar{f},(\partial_t+ w_i\partial_i )T_\beta T_q  \bar{f}\rangle$.
\begin{align*}
\frac{1}{2}\partial_t\langle(\partial_t+ w_i\partial_i )T_\beta T_q  \bar{f},(\partial_t+ w_i\partial_i )T_\beta T_q  \bar{f}\rangle
=&\langle\partial^2_tT_\beta T_q  \bar{f},(\partial_t+ w_i\partial_i )T_\beta T_q  \bar{f}\rangle\\
&+\langle w_i\partial_t\partial_iT_\beta T_q  \bar{f},(\partial_t+ w_i\partial_i )T_\beta T_q  \bar{f}\rangle\\
&+\langle(\partial_t w_i)\partial_iT_\beta T_q  \bar{f},(\partial_t+ w_i\partial_i )T_\beta T_q  \bar{f}\rangle\\
\eqdefa&\uppercase\expandafter{\romannumeral1}+\uppercase\expandafter{\romannumeral2}+\uppercase\expandafter{\romannumeral3}.
\end{align*}
It follows from Proposition \ref{para1} and Lemma \ref{symbol.est2} that
\begin{align*}
	\uppercase\expandafter{\romannumeral3}\le C(L_1,L_2)(\|\bar{f}\|^2_{H^{s+\frac{1}{2}}}+\|\partial_t\bar{f}\|^2_{H^{s-\frac{1}{2}}}).
\end{align*}
From (\ref{system3}), we deduce by using Lemma \ref{symbol.lem1} that
\begin{align*}
\uppercase\expandafter{\romannumeral1}=&\langle T_\beta T_q \partial^2_t \bar{f},(\partial_t+ w_i\partial_i )T_\beta T_q  \bar{f}\rangle
+\langle T_{\partial^2_t\beta} T_q  \bar{f}+T_{\beta} T_{\partial^2_t q}  \bar{f},(\partial_t+ w_i\partial_i )T_\beta T_q  \bar{f}\rangle\\
&+2\langle T_{\beta} T_{\partial_t q}\partial_t\bar{f}+T_{\partial_t\beta} T_{ q}\partial_t\bar{f}+T_{\partial_t\beta} T_{\partial_t q}  \bar{f},(\partial_t+ w_i\partial_i )T_\beta T_q  \bar{f}\rangle\\
=&\langle T_\beta T_q \partial^2_t \bar{f},(\partial_t+ w_i\partial_i )T_\beta T_q  \bar{f}\rangle+R_2\\
=&-\frac{\sigma}{2}\langle T_\beta T_q T_\lambda T_l \bar{f},(\partial_t+ w_i\partial_i )T_\beta T_q  \bar{f}\rangle\\
&+\langle T_\beta T_q (-2 w_i\partial_i\partial_t\bar{f}),(\partial_t+ w_i\partial_i )T_\beta T_q  \bar{f}\rangle\\
&+\langle T_\beta T_q [( - w_i\vw_j-v_iv_j +\frac{1}{2}\underline{h}_i^+\underline{h}_j^+ +\frac{1}{2}\underline{h}_i^-\underline{h}_j^- )\partial_i\partial_j  \bar{f}],(\partial_t+ w_i\partial_i )T_\beta T_q  \bar{f}\rangle\\
&+\langle T_\beta T_q (\mathfrak{g}+\frac{\sigma}{4} R),(\partial_t+ w_i\partial_i )T_\beta T_q  \bar{f}\rangle+R_2\\
\eqdefa&I_1+I_2+I_3+I_4+R_2.
\end{align*}
With the help of Proposition \ref{symbol.pro4}, Lemma \ref{symbol.lem2}, and Lemma \ref{symbol.lem3}, we have
\begin{align*}
I_1=&-\frac{\sigma}{2}\langle T_\beta T_q T_\lambda T_l \bar{f},(\partial_t+ w_i\partial_i )T_\beta T_q  \bar{f}\rangle\\
=&-\frac{\sigma}{2}\langle (T_\gamma)^* T_\gamma T_\beta T_q\bar{f},(\partial_t+ w_i\partial_i )T_\beta T_q  \bar{f}\rangle\\
&-\frac{\sigma}{2}\langle \big(((T_\gamma)^*- T_\gamma)T_\gamma T_\beta+T_\gamma[T_\beta,T_\gamma]+[T_\beta,T_\gamma]T_\gamma\big) T_q\bar{f},(\partial_t+ w_i\partial_i )T_\beta T_q  \bar{f}\rangle
\\
&-\frac{\sigma}{2}\langle T_\beta(  T_q T_\lambda T_l-T_\gamma T_\gamma T_q ) \bar{f},(\partial_t+ w_i\partial_i )T_\beta T_q  \bar{f}\rangle\\
=&-\frac{\sigma}{2}\langle  T_\gamma T_\beta  T_q  \bar{f},T_\gamma(\partial_t+ w_i\partial_i )T_\beta T_q  \bar{f}\rangle+R_2\\
=&-\frac{\sigma}{2}\langle  T_\gamma T_\beta  T_q  \bar{f},\partial_tT_\gamma T_\beta T_q  \bar{f}\rangle\\
&+\frac{\sigma}{4}\langle  T_\gamma T_\beta  T_q  \bar{f},(\pa_iw_i)T_\gamma T_\beta T_q  \bar{f}\rangle+\frac{\sigma}{2}\langle  T_\gamma T_\beta  T_q  \bar{f},\big(T_{\pa_t\gamma}+w_iT_{\pa_i\gamma}-[T_\gamma,w_i]\pa_i\big) T_\beta T_q  \bar{f}\rangle+R_2\\
=&-\frac{\sigma}{4}\partial_t\langle  T_\gamma T_\beta  T_q  \bar{f},T_\gamma T_\beta T_q  \bar{f}\rangle+ (\sigma^2+\sigma)R_1+R_2.
\end{align*}
Similarly, it follows from Proposition \ref{symbol.pro4} and Lemma \ref{symbol.lem3} that
\begin{align*}
I_2=-2\langle T_\beta T_q  w_i\partial_i\partial_t\bar{f},(\partial_t+ w_i\partial_i )T_\beta T_q  \bar{f}\rangle
=-2\langle  w_i\partial_i\partial_t T_\beta T_q \bar{f},(\partial_t+ w_i\partial_i )T_\beta T_q  \bar{f}\rangle+R_2.
\end{align*}
Therefore, we have
\begin{align*}
I_2+\uppercase\expandafter{\romannumeral2}=&-\langle  w_i\partial_i\partial_t T_\beta T_q \bar{f},(\partial_t+ w_i\partial_i )T_\beta T_q  \bar{f}\rangle+R_2,\\
=&-\langle  w_i\partial_i\partial_t T_\beta T_q \bar{f},  w_i\partial_i T_\beta T_q  \bar{f}\rangle-\langle  w_i\partial_i\partial_t T_\beta T_q \bar{f},\partial_t T_\beta T_q  \bar{f}\rangle+R_2\\
=&-\langle  \partial_t(w_i\partial_i T_\beta T_q \bar{f}),  w_i\partial_i T_\beta T_q  \bar{f}\rangle+\langle  \frac{1}{2}(\pa_iw_i)\partial_t T_\beta T_q \bar{f}+(\pa_tw_i)\pa_i\partial_t T_\beta T_q \bar{f},\partial_t T_\beta T_q  \bar{f}\rangle+R_2\\
=&-\frac{1}{2}\frac{d}{dt}\| w_i\partial_i T_\beta T_q \bar{f}\|_{L^2}^2+R_2.
\end{align*}
For the same reason, let $a_i= w_i,v_i,\underline{h}_i^+,\underline{h}_i^-$, one can deduce that
\begin{align*}
&\langle T_\beta T_q a_i a_j \partial_i\partial_j  \bar{f},(\partial_t+w_k\partial_k )T_\beta T_q  \bar{f}\rangle\\
=&\langle a_i a_j \partial_i\partial_j T_\beta T_q   \bar{f},(\partial_t+w_k\partial_k )T_\beta T_q  \bar{f}\rangle+R_2\\
=&-\langle  a_j \partial_j T_\beta T_q   \bar{f},a_i \partial_i(\partial_t+w_k\partial_k )T_\beta T_q  \bar{f}\rangle+R_2\\
=&-\langle  a_j \partial_j T_\beta T_q   \bar{f},\partial_t(a_i \partial_iT_\beta T_q  \bar{f})\rangle-\langle  a_j \partial_j T_\beta T_q   \bar{f},w_k\partial_k(a_i \partial_iT_\beta T_q  \bar{f})\rangle+R_2\\
=&-\frac{1}{2}\frac{d}{dt}\| a_i \partial_i T_\beta T_q   \bar{f}\|_{L^2}^2+R_2.
\end{align*}
It follows that
\begin{align*}
I_3=\frac{1}{2}\frac{d}{dt}\|  w_i \partial_i T_\beta T_q   \bar{f}\|_{L^2}^2+\frac{1}{2}\frac{d}{dt}\| v_i \partial_i T_\beta T_q   \bar{f}\|_{L^2}^2-
\frac{1}{4}\frac{d}{dt}\| \underline{h}_i^+ \partial_i T_\beta T_q   \bar{f}\|_{L^2}^2-\frac{1}{4}\frac{d}{dt}\| \underline{h}_i^- \partial_i T_\beta T_q   \bar{f}\|_{L^2}^2
+R_2.
\end{align*}
And obviously, it holds that
\begin{align*}
I_4\le\|\mathfrak{g}\|^2_{H^{s-\frac{1}{2}}}+\sigma^2\| R\|^2_{H^{s-\frac{1}{2}}}+C(L_1,L_2)(\|\bar{f}\|^2_{H^{s+\frac{1}{2}}}+\|\partial_t\bar{f}\|^2_{H^{s-\frac{1}{2}}}).
\end{align*}
Putting the above estimates together, we arrive at
\begin{align}\label{eq-eng-ineq}
\frac{d}{dt}(\sigma E_1(t)+E_2(t))\le C(L_1,L_2)(\sigma\mathcal{E}_1(t)+\mathcal{E}_2(t))+\|\mathfrak{g}\|^2_{H^{s-\frac{1}{2}}}+\sigma^2\| R\|^2_{H^{s-\frac{1}{2}}}.
\end{align}
Here $E_1$, $E_2$, $\mathcal{E}_1$, and $\mathcal{E}_2$ are energy functionals
\begin{align*}
	E_1&=\frac{1}{4}\|T_\gamma T_\beta T_q \bar{f}\|^2_{L^2},\\
	E_2&=\|(\partial_t+ w_i\partial_i )T_\beta T_q  \bar{f}\|^2_{L^2}-\frac{1}{2}\| v_i \partial_i T_\beta T_q   \bar{f}\|_{L^2}^2+\frac{1}{4}\| \underline{h}_i^+ \partial_i T_\beta T_q   \bar{f}\|_{L^2}^2+\frac{1}{4}\| \underline{h}_i^- \partial_i T_\beta T_q   \bar{f}\|_{L^2}^2,
\end{align*}
and
\begin{align*}
	\mathcal{E}_1(t)=\|\bar{f}\|^2_{H^{s+1}},\quad\mathcal{E}_2(t)=\|\bar{f}\|^2_{H^{s+\frac{1}{2}}}+\|\partial_t \bar{f}\|^2_{H^{s-\frac{1}{2}}}.
\end{align*}
By Proposition \ref{para1} and Lemma \ref{symbol.lem3}, one can easily seen that
\begin{align*}
&E_1=\frac{1}{4}\|T_\gamma T_\beta T_q \bar{f}\|^2_{L^2}\le C(L_0)\| T_\beta T_q \bar{f}\|^2_{H^{\frac{3}{2}}}\le C(L_0)\| T_q \bar{f}\|_{H^{s+1}}\le C(L_0)\|  \bar{f}\|^2_{H^{s+1}},\\
&\|(\partial_t+ w_i\partial_i )T_\beta T_q  \bar{f}\|^2_{L^2}\\
\le& C(\|T_{\partial_t\beta} T_q  \bar{f}\|_{L^2}+\|T_{\beta} T_{\partial_tq}  \bar{f}\|_{L^2}+\|T_{\beta} T_{q}  \partial_t\bar{f}\|_{L^2}+\| w_i\|_{L^\infty}\|\partial_iT_\beta T_q  \bar{f}\|_{L^2})\\
\le& C(L_0)(\|\bar{f}\|_{H^{s+\frac{1}{2}}}+\|\partial_t\bar{f}\|_{H^{s-\frac{1}{2}}}),\\
&-\frac{1}{2}\| v_i \partial_i T_\beta T_q   \bar{f}\|_{L^2}+\frac{1}{4}\| \underline{h}_i^+ \partial_i T_\beta T_q   \bar{f}\|_{L^2}+\frac{1}{4}\| \underline{h}_i^- \partial_i T_\beta T_q   \bar{f}\|_{L^2}\le C(L_0)\|\bar{f}\|_{H^{s+\frac{1}{2}}},
\end{align*}
which means that
\begin{align*}
\sigma E_1 +E_2 \le C(L_0)  (\sigma\mathcal{E}_1 +\mathcal{E}_2 ).
\end{align*}
On the other hand, as $T_\gamma$, $T_\beta$, and $T_q$ are all elliptic operators, $\sigma\mathcal{E}_1 +\mathcal{E}_2$ could also be controlled by $\sigma E_1 +E_2$. Indeed, $\gamma\in\Sigma^{\frac{3}{2}}$,
\begin{align}\label{elliptic}
&\gamma^{\frac{3}{2}}(x,\xi)=\sqrt{\lambda^{1}l^{2}}(x,\xi)=(\frac{|\xi|^2+|\nabla f|^2|\xi|^2-(\nabla f\cdot\xi)^2}{1+|\nabla f|^2})^{\frac{3}{4}}
\ge(1+C(L_0))^{-\frac{3}{4}}|\xi|^{\frac{3}{2}},
\end{align}
which have positive lower bound in $x$. We also have $\beta\in\Sigma^{s- \frac{1}{2}}$, and $q\in \Sigma^0$. As a result, it holds that
\begin{align*}
\sigma\|\bar{f}\|^2_{H^{s+1}}\le& C(L_0)\sigma(\|T_q\bar{f}\|^2_{H^{s+1}}+\|\bar{f}\|^2_{L^2})\\
\le& C(L_0)\sigma(\|T_\beta T_q\bar{f}\|^2_{H^{\frac{3}{2}}}+\|\bar{f}\|^2_{L^2})\le
C(L_0)\sigma(\|T_\gamma T_\beta T_q\bar{f}\|^2_{H^{\frac{3}{2}}}+\|\bar{f}\|^2_{L^2}),\\
\|\bar{f}\|^2_{H^{s+\frac{1}{2}}}\le& (\sigma\|\bar{f}\|^2_{H^{s+1}}+C(\sigma)\|\bar{f}\|^2_{L^2})\le
C(L_0)\big(\sigma\|T_\gamma T_\beta T_q\bar{f}\|_{H^{\frac{3}{2}}}+C(\sigma)\|\bar{f}\|_{L^2}\big),\\
\|\partial_t\bar{f}\|^2_{H^{s-\frac{1}{2}}}\le& C(L_0)(\|T_\beta T_q \partial_t \bar{f}\|^2_{L^2}+\|\partial_t \bar{f}\|^2_{L^2})\\
\le&C(L_0)\big(\|(\partial_t+ w_i\partial_i )T_\beta T_q  \bar{f}\|^2_{L^2}+\|\bar{f}\|^2_{H^{s+\frac{1}{2}}}+\|\partial_t \bar{f}\|_{L^2}\big)\\
\le& C(L_0)\big(\|(\partial_t+ w_i\partial_i )T_\beta T_q  \bar{f}\|^2_{L^2}+\sigma\|T_\gamma T_\beta T_q\bar{f}\|_{H^{\frac{3}{2}}}+C(\sigma)\|\bar{f}\|_{L^2}+\|\partial_t\bar{f}\|^2_{L^2}\big).
\end{align*}
Here we use Proposition \ref{symbol.pro2}, Proposition \ref{symbol.pro3}, and the Gagliardo-Nirenberg interpolation inequality, and $C(\sigma)$ is determined by $\sigma$ and $(1+C(L_0))^{-\frac{3}{4}}$. As a conclusion, we have
\begin{align}\label{E2}
\sigma\mathcal{E}_1 +\mathcal{E}_2(t)\le C(L_0)  (\sigma E_1 +E_2 +C(\sigma)\|\bar{f}\|^2_{L^2}+\|\partial_t\bar{f}\|^2_{L^2}).
\end{align}
It is easily seen that
\begin{align*}
\frac{d}{dt}\big(C(\sigma)\|\bar{f}\|^2_{L^2}+\|\partial_t\bar{f}\|^2_{L^2}\big)\le C(\sigma,L_0)(\|\bar{f}\|^2_{H^{s+\frac{1}{2}}}+\|\partial_t \bar{f}\|^2_{H^{s-\frac{1}{2}}})+\|\mathfrak{g}\|^2_{L^2}+\sigma^2\| R\|^2_{L^2}.
\end{align*}
Thus we get by \eqref{eq-eng-ineq} that
\begin{align*}
\sup_{t\in[0,T]}\{\sigma\mathcal{E}_1(t)+\mathcal{E}_2(t)\}\le& C(\sigma,L_0)\Big\{\sigma\|\bar{f}_0\|^2_{H^{s+1}}+\|\bar{f}_0\|^2_{H^{s+\frac{1}{2}}}+\|\bar\theta_0\|^2_{H^{s-\frac{1}{2}}}\\
&+\int_0^T\|\mathfrak{g}\|^2_{H^{s-\frac{1}{2}}}+\sigma^2\| R\|^2_{H^{s-\frac{1}{2}}}d\tau+C(L_1,L_2)\int_0^T\sigma\mathcal{E}_1(\tau)+\mathcal{E}_2(\tau)d\tau\Big\}.
\end{align*}
One can get the desired estimates by Gronwall's inequality.
\end{proof}
\begin{remark}
Notice that $-\frac{1}{2}\| v_i \partial_i T_\beta T_q   \bar{f}\|_{L^2}^2+\frac{1}{4}\| \underline{h}_i^+ \partial_i T_\beta T_q   \bar{f}\|_{L^2}^2+\frac{1}{4}\| \underline{h}_i^- \partial_i T_\beta T_q   \bar{f}\|_{L^2}^2$ may not be positive. We add an extra $C(\sigma)\|\bar{f}\|^2_{L^2}$ to ensure that
\begin{align*}
	&\sigma\|T_\gamma T_\beta T_q \bar{f}\|^2_{L^2}-\frac{1}{2}\| v_i \partial_i T_\beta T_q   \bar{f}\|_{L^2}^2+\frac{1}{4}\| \underline{h}_i^+ \partial_i T_\beta T_q   \bar{f}\|_{L^2}^2+\frac{1}{4}\| \underline{h}_i^- \partial_i T_\beta T_q   \bar{f}\|_{L^2}^2+C(\sigma)\|\bar{f}\|^2_{L^2}\\
	&\ge\|\bar{f}\|^2_{H^{s+\frac{1}{2}}}.
\end{align*}
Here $C(\sigma)$ will get bigger as $\sigma$ gets smaller, and this leads to the above estimates depending on $\sigma$. If the stability condition \eqref{con-sta} holds, we no longer need to introduce $C(\sigma)\|\bar{f}\|^2_{L^2}$, and the energy estimate will not depend on $\sigma$. We will  discuss this kind of problems  in Section 6.
\end{remark}
\begin{lemma}\label{lem-g}
It holds that
\begin{equation*}
\|\mathfrak{g}\|^2_{H^{s-\frac{1}{2}}}+\sigma^2\|R\|^2_{H^{s-\frac{1}{2}}}\le C(L_1).
\end{equation*}
\end{lemma}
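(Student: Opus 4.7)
The lemma asserts two separate bounds, one on $\mathfrak{g}=\mathfrak{g}_1+\cdots+\mathfrak{g}_5$ and one on $\sigma R$, and both can be handled by systematic application of the tools assembled in Sections~2--3 together with the a priori control built into $L_1$. The plan is to estimate each piece in turn.

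The bound on $R$ is essentially immediate from the preceding estimates: combining the three inequalities stated just before \eqref{eq-est-para-R} yields $\|R\|_{H^{s-\frac{1}{2}}}\le C(\|f\|_{H^{s+\frac{1}{2}}})(1+\|f\|_{H^{s+1}})$. Multiplying by $\sigma$, using $\|f\|_{H^{s+\frac{1}{2}}}\le L_1$ and $\sigma\|f\|_{H^{s+1}}^2\le L_1^2$, and absorbing an (assumed bounded) factor of $\sigma$ into the constant, one concludes $\sigma^2\|R\|^2_{H^{s-\frac{1}{2}}}\le C(L_1)$.

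For $\mathfrak{g}_1$ and $\mathfrak{g}_2$ I would exploit the crucial observation from Lemma \ref{decomposition1} that the principal symbols $\lambda^{+(1)}$ and $\lambda^{-(1)}$ coincide, so $\mathcal{N}^+_f-\mathcal{N}^-_f$ is in fact a paradifferential operator of order $0$; composed with the order-$(-1)$ operator $\widetilde{\mathcal{N}_f}^{-1}$, the prefactor $(\mathcal{N}^+_f-\mathcal{N}^-_f)\widetilde{\mathcal{N}_f}^{-1}\mathcal{P}$ then maps $H^{s-\frac{3}{2}}$ boundedly into $H^{s-\frac{1}{2}}$. The arguments inside are quadratic in the traces $\underline{\vu}^\pm,\underline{\vh}^\pm\in H^{s-\frac{1}{2}}$ multiplied by $\partial^2 f\in H^{s-\frac{3}{2}}$ or by $\partial\theta\in H^{s-\frac{3}{2}}$ (recall $\theta=\partial_t f$); the Sobolev product rule, valid since $s-\frac{1}{2}>1$, keeps these in $H^{s-\frac{3}{2}}$, so both terms are controlled by $C(L_1)$. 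For $\mathfrak{g}_3$ and $\mathfrak{g}_4$ I would invoke elliptic regularity on the moving domains $\Omega_f^\pm$ for \eqref{eq-p} with right-hand sides $-\mathrm{tr}(\nabla\vu^\pm\nabla\vu^\pm)$ and $-\mathrm{tr}(\nabla\vh^\pm\nabla\vh^\pm)$, then take traces of the gradient on $\Gamma_f$: for $\mathfrak{g}_4$ the order-$(-1)$ Dirichlet--Neumann prefactor absorbs any loss, while for $\mathfrak{g}_3$ one exploits the fact that the normal derivative of the auxiliary pressure is already regular enough on $\Gamma_f$ under the assumed $H^{s+\frac{1}{2}}$ regularity of the interface.

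The term requiring genuine care is $\mathfrak{g}_5=-\tfrac{\sigma}{4}(\mathcal{N}^+_f-\mathcal{N}^-_f)\widetilde{\mathcal{N}_f}^{-1}(\mathcal{N}^+_f-\mathcal{N}^-_f)H(f)$, since $H(f)$ is itself of second order in $f$. The same order-$0$ identity for $\mathcal{N}^+_f-\mathcal{N}^-_f$ is essential here: the composite operator multiplying $H(f)$ is of total order $-1$, which converts the second-order expression $H(f)$ into an object of effective order $1$ in $f$, hence bounded in $H^{s-\frac{1}{2}}$ by $C\|f\|_{H^{s+\frac{1}{2}}}$; together with the weight $\sigma^{\frac{1}{2}}\|f\|_{H^{s+1}}\le L_1$ and a bounded factor of $\sigma$, this yields $\|\mathfrak{g}_5\|_{H^{s-\frac{1}{2}}}\le C(L_1)$. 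The main technical obstacle throughout is precisely this bookkeeping: carefully tracking the principal-symbol cancellation in $\mathcal{N}^+_f-\mathcal{N}^-_f$ so that no apparent loss of derivatives forces repeated invocation of the surface-tension weight, and verifying elliptic regularity for the auxiliary pressures on the moving domains with the assumed $H^{s+\frac{1}{2}}$ free surface.
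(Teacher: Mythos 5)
Your proposal follows essentially the same route as the paper: use the order-$0$ property of $\mathcal{N}^+_f-\mathcal{N}^-_f$ together with the order-$(-1)$ bound on $\widetilde{\mathcal{N}_f}^{-1}$ (Propositions \ref{prop:DN-Hs}, \ref{prop:DN-inverse}) for $\mathfrak{g}_1,\mathfrak{g}_2$, elliptic regularity for \eqref{eq-p} plus the trace theorem for $\mathfrak{g}_3,\mathfrak{g}_4$, and \eqref{eq-est-para-R} with the weight $\sigma^{1/2}\|f\|_{H^{s+1}}\le L_1$ for $\sigma\|R\|_{H^{s-\frac12}}$. A small comment on style: the paper's treatment of $\mathfrak{g}_3,\mathfrak{g}_4$ does not need the distinction you draw --- it applies the same trace estimate $\|\underline{\nabla p_{\vv,\vv}}\|_{H^{s-\frac12}}\le C\|\nabla p_{\vv,\vv}\|_{H^s(\Omega_f^\pm)}$ to both, the DN prefactor in $\mathfrak{g}_4$ is not even needed since the pressure trace is already at the right regularity.

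One point worth noting: your proof handles $\mathfrak{g}_5$, which the paper's written proof in fact omits (it explicitly bounds $\mathfrak{g}_1,\mathfrak{g}_2,\mathfrak{g}_3,\mathfrak{g}_4$ and $\sigma R$ but never mentions $\mathfrak{g}_5$). Your argument for it is correct in substance --- the composite operator $(\mathcal{N}^+_f-\mathcal{N}^-_f)\widetilde{\mathcal{N}_f}^{-1}(\mathcal{N}^+_f-\mathcal{N}^-_f)$ has total order $-1$, so it maps $H(f)\in H^{s-\frac32}$ into $H^{s-\frac12}$ --- but the last step is over-engineered: since $\|f\|_{H^{s+\frac12}}\le L_1$ already gives $\|H(f)\|_{H^{s-\frac32}}\le C(L_1)$, one needs only boundedness of $\sigma$ and does not need to invoke $\sigma^{\frac12}\|f\|_{H^{s+1}}\le L_1$ at all. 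Modulo that redundancy, the proposal is correct.
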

\begin{proof}
The estimate of $\|R\|_{H^{s-\frac{1}{2}}}$ is given in \eqref{eq-est-para-R}. By using Proposition \ref{prop:DN-Hs} and Proposition \ref{prop:DN-inverse}, we have
\begin{align*}
	\|\mathfrak{g}_1\|_{H^{s-\frac{1}{2}}}&\le C(L_1)\|(\underline{u}_i^+\underline{u}_j^+  - \underline{u}_i^-\underline{u}_j^- -\underline{h}_i^+\underline{h}_j^+ +\underline{h}_i^-\underline{h}_j^- )\partial_i\partial_j  f\|_{H^{s-\frac{3}{2}}}\\
&\le C(L_1)\|(\underline{\textbf{u}}^{\pm},\underline{\textbf{h}}^{\pm})\|_{H^{s-\frac{3}{2}}}\|f\|_{H^{s+\frac{1}{2}}}\le C(L_1),\\
\|\mathfrak{g}_2\|_{H^{s-\frac{1}{2}}}&\le C(L_1) \|\underline{\textbf{u}}^{\pm}\|_{H^{s-\frac{3}{2}}}\|\theta\|_{{H^{s-\frac{1}{2}}}}\le C(L_1),\\
\|(\mathfrak{g}_3,\mathfrak{g}_4)\|_{H^{s-\frac{1}{2}}}&\le C(L_1)(\|\underline{\nabla( p_{\textbf{u}^{+},\textbf{u}^{+}}-p_{\textbf{h}^{+},\textbf{h}^{+}})}\|_{H^{s-\frac{1}{2}}}
+\|\underline{\nabla( p_{\textbf{u}^{-},\textbf{u}^{-}}-p_{\textbf{h}^{-},\textbf{h}^{-}})}\|_{H^{s-\frac{1}{2}}})\\
&\le C(L_1)(\|{\nabla( p_{\textbf{u}^{+},\textbf{u}^{+}}-p_{\textbf{h}^{+},\textbf{h}^{+}})}\|_{H^{s}(\Omega_f^+)}
+\|{\nabla( p_{\textbf{u}^{-},\textbf{u}^{-}}-p_{\textbf{h}^{-},\textbf{h}^{-}})}\|_{H^{s}(\Omega_f^-)})\\
&\le C(L_1)\|(\textbf{u}^{\pm},\textbf{h}^{\pm})\|_{H^{s}(\Omega_f^\pm)}\le C(L_1),\\
\sigma\|R\|_{H^{s-\frac{1}{2}}}&\le C(L_1)\sigma\|f\|_{H^{s+1}}\le C(L_1).
\end{align*}
This end the proof.
\end{proof}
\subsection{The Linearized System of $(\omega,\xi)$}
From \eqref{eq:curl}, we introduce the following linearized system:
\begin{equation}\label{equa3}
  \left\{
  	\begin{array}{l}
  	\partial_t \bar{\vom}^\pm+\textbf{u}^{\pm}\cdot\nabla\bar{\vom}^\pm-\textbf{h}^{\pm}\cdot\nabla\bar{\vj}^\pm
=\bar{\vom}^\pm\cdot\nabla\textbf{u}^{\pm}-\bar{\vj}^\pm\cdot\nabla\textbf{h}^{\pm},\\
  	\partial_t \bar{\vj}^\pm+\textbf{u}^{\pm}\cdot\nabla\bar{\vj}^\pm-\textbf{h}^{\pm}\cdot\nabla\bar{\vom}^\pm
=\bar{\vj}^\pm\cdot\nabla\textbf{u}^{\pm}-\bar{\vom}^\pm\cdot\nabla\textbf{h}^{\pm}-2\sum^3_{i=1}\nabla u_i^\pm\times\nabla h_i^\pm,	
  	\end{array}
  \right.
\end{equation}
which gives
\begin{equation*}
\begin{aligned}
&\partial_t (\bar{\vom}^\pm+\bar{\vj}^\pm)+\textbf{u}^{\pm}\cdot\nabla(\bar{\vom}^\pm+\bar{\vj}^\pm)
-\textbf{h}^{\pm}\cdot\nabla(\bar{\vom}^\pm+\bar{\vj}^\pm)\\
=&(\bar{\vom}^\pm+\bar{\vj}^\pm)\cdot\nabla\textbf{u}^{\pm}
-(\bar{\vom}^\pm+\bar{\vj}^\pm)\cdot\nabla\textbf{h}^{\pm}-2\sum^3_{i=1}\nabla u_i^\pm\times\nabla h_i^\pm.
\end{aligned}
\end{equation*}
Therefore, we introduce $\varpi^{\pm}=\bar{\vom}^\pm+\bar{\vj}^\pm$ which satisfies
\begin{equation}\label{equa6}
\partial_t \varpi^{\pm}+(\textbf{u}^{\pm}-\textbf{h}^{\pm})\cdot\nabla\varpi^{\pm}=\varpi^{\pm}\cdot\nabla
(\textbf{u}^{\pm}-\textbf{h}^{\pm})-2\nabla u_i^\pm\times\nabla h_i^\pm.
\end{equation}
We define
\begin{align*}
&\frac{dX^\pm(t,x)}{dt}=(\textbf{u}-\textbf{h})^\pm\big(t,X^\pm(t,x)\big), &\quad\quad x\in\Omega_{f_0}^{\pm}.\\
&X^\pm(0,x)=\mathrm{Id}. &\quad\quad x\in\Omega_{f_0}^{\pm},
\end{align*}
where the Id means the identity map. Recalling that $\underline\vh^\pm\cdot\vN_f=0$, one can see that $X^\pm(t,\cdot)$ is a flow map from $\Omega_{f_0}^{\pm}$ to $\Omega_{f(t)}^{\pm}$. Then we have
\begin{equation*}
\frac{d\varpi^{\pm}\big(t,X^\pm(t,x)\big)}{dt}=\big(\varpi^{\pm}\cdot\nabla
(\textbf{u}^{\pm}-\textbf{h}^{\pm})-2\nabla u_i^\pm\times\nabla h_i^\pm\big)\big(t,X^\pm(t,x)\big),\quad\quad x\in\Omega_{f_0}^{\pm}.
\end{equation*}
This is a linear ODE system, and the existence of $\bar{\vom}^\pm+\bar{\vj}^\pm$ follows immediately. So do $\bar{\vom}^\pm-\bar{\vj}^\pm$. Next, we give the energy estimates for $(\bar{\vom}^\pm,\bar{\vj}^\pm)$.
\begin{proposition}\label{pro-curl}
It holds that
\begin{align*}
	\sup_{t\in [0,T]}(\| \bar{\vom}^\pm(t) \|_{H^{s-1}(\Omega_{f}^{\pm})}^2+\| \bar{\vj}^\pm(t) \|_{H^{s-1}(\Omega_{f}^{\pm})}^2)\le (1+\| \bar{\vom}^\pm_0 \|_{H^{s-1}(\Omega_{f_0}^{\pm})}^2
+\| \bar{\vj}^\pm_0|_{H^{s-1}(\Omega_{f_0}^{\pm})}^2)e^{C(L_1)T}.
\end{align*}
\end{proposition}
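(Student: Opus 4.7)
The plan is to exploit the characteristic decoupling already initiated in the excerpt and reduce the $H^{s-1}$ estimate on the moving domain to a standard transport estimate on the fixed reference domain. Subtracting, rather than adding, the two equations of \eqref{equa3} yields for $\chi^\pm := \bar{\vom}^\pm - \bar{\vj}^\pm$ an equation analogous to \eqref{equa6}:
$$\pa_t \chi^\pm + (\vu^\pm + \vh^\pm)\cdot\nabla \chi^\pm = \chi^\pm\cdot\nabla(\vu^\pm + \vh^\pm) + 2\sum_{i=1}^{3}\nabla u_i^\pm\times\nabla h_i^\pm.$$
Both transport velocities $\vu^\pm \mp \vh^\pm$ satisfy $(\vu^\pm\mp\vh^\pm)\cdot\vN_f = \pa_t f$ on $\Gamma_f$ (using $\vu^\pm\cdot\vN_f = \pa_t f$ and $\vh^\pm\cdot\vN_f = 0$) and have vanishing third component on $\Gamma^\pm$, so each preserves $\Omega_{f(t)}^\pm$, as already noted in the excerpt for $\vu^\pm - \vh^\pm$. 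Since $2\bar{\vom}^\pm = \varpi^\pm + \chi^\pm$ and $2\bar{\vj}^\pm = \varpi^\pm - \chi^\pm$, it suffices to bound $\varpi^\pm$ and $\chi^\pm$ in $H^{s-1}(\Omega_f^\pm)$.

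To handle the moving domain, pull back via the harmonic coordinate $\Phi_f^\pm : \Omega_*^\pm \to \Omega_{f(t)}^\pm$. Setting $\tilde\varpi^\pm(t,y) := \varpi^\pm(t,\Phi_f^\pm(t,y))$ and similarly $\tilde\chi^\pm$, a chain-rule computation produces transport equations on the fixed domain,
$$\pa_t \tilde\varpi^\pm + \widetilde W^\pm \cdot \nabla_y \tilde\varpi^\pm = \widetilde F^\pm, \qquad \pa_t \tilde\chi^\pm + \widetilde Y^\pm \cdot \nabla_y \tilde\chi^\pm = \widetilde G^\pm,$$
with, e.g., $\widetilde W^\pm = (D\Phi_f^\pm)^{-1}\bigl((\vu^\pm - \vh^\pm)\circ\Phi_f^\pm - \pa_t \Phi_f^\pm\bigr)$. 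The essential point is that $\widetilde W^\pm$ and $\widetilde Y^\pm$ are tangent to $\pa\Omega_*^\pm$: on $\Gamma_*$, the identity $\pa_t f = \vu^\pm\cdot\vN_f$ together with $\vh^\pm\cdot\vN_f = 0$ matches exactly the normal speed of $\pa_t\Phi_f^\pm$; on $\Gamma^\pm$, all pieces have vanishing third component. Items 1 and 2 of the harmonic coordinate lemma give $\|\tilde\varpi^\pm\|_{H^{s-1}(\Omega_*^\pm)} \simeq \|\varpi^\pm\|_{H^{s-1}(\Omega_f^\pm)}$ (and likewise for $\chi^\pm$) with constants depending only on $\delta_0$ and $L_0$.

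The standard $H^{s-1}$ energy estimate then goes through on the fixed domain. For each $|\alpha| \le s-1$, differentiate the transport equation, pair with $\pa_y^\alpha \tilde\varpi^\pm$ in $L^2(\Omega_*^\pm)$, and integrate by parts; tangency of $\widetilde W^\pm$ kills all boundary contributions, leaving only $\tfrac{1}{2}\int \mathrm{div}\,\widetilde W^\pm\,|\pa^\alpha \tilde\varpi^\pm|^2$, the commutator $[\pa^\alpha,\widetilde W^\pm\cdot\nabla]\tilde\varpi^\pm$ in $L^2$, and $\|\pa^\alpha \widetilde F^\pm\|_{L^2}\|\pa^\alpha \tilde\varpi^\pm\|_{L^2}$. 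Kato--Ponce/Moser inequalities control the commutator by $C(L_1)\|\tilde\varpi^\pm\|_{H^{s-1}}$, since $\widetilde W^\pm \in H^s(\Omega_*^\pm)$ follows from $\vu^\pm, \vh^\pm \in H^s(\Omega_f^\pm)$ and $f \in H^{s+\frac{1}{2}}$ via items 1 and 3 of the harmonic coordinate lemma and the regularity of $\pa_t\Phi_f^\pm$. The dominant piece $\sum_i \nabla u_i^\pm \times \nabla h_i^\pm$ of the forcing is bounded in $H^{s-1}$ by $C(L_1)$ via the same product estimate. Summing over $\alpha$ and combining with the corresponding estimate for $\tilde\chi^\pm$ produces
$$\tfrac{d}{dt}\bigl(\|\tilde\varpi^\pm\|_{H^{s-1}}^2 + \|\tilde\chi^\pm\|_{H^{s-1}}^2\bigr) \le C(L_1)\bigl(1 + \|\tilde\varpi^\pm\|_{H^{s-1}}^2 + \|\tilde\chi^\pm\|_{H^{s-1}}^2\bigr),$$
and Gr\"onwall's inequality together with the norm equivalence delivers the claimed bound.

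The main obstacle is the boundary handling in the energy identity. Working directly in the moving domain $\Omega_{f(t)}^\pm$, the Reynolds transport term coming from $\tfrac{d}{dt}\int_{\Omega_{f(t)}^\pm}|\pa^\alpha \varpi^\pm|^2$ and the boundary integral produced by integrating the transport term by parts must cancel against one another; this is correct but tedious to track. Pulling back via $\Phi_f^\pm$ converts this bookkeeping into the structural tangency statement $\widetilde W^\pm \cdot \vn_{\pa\Omega_*^\pm} = 0$, after which the argument is entirely routine interior energy analysis on the fixed reference domain.
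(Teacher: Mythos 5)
Your proof is correct, and it takes a genuinely different technical route from the paper's. The paper works \emph{directly on the moving domain}: it applies the Reynolds transport theorem to $\frac{d}{dt}\int_{\Omega_f^\pm}|\nabla^{s-1}\varpi^\pm|^2$, substitutes the transport equation, integrates by parts, and observes that the boundary contribution on $\Gamma_f$ from integration by parts, namely $\int_{\Gamma_f}(\vu^\pm-\vh^\pm)\cdot\vn\,|\nabla^{s-1}\varpi^\pm|^2$, cancels against the Reynolds boundary term $\int_{\Gamma_f}\vu^\pm\cdot\vn\,|\nabla^{s-1}\varpi^\pm|^2$ precisely because $\vh^\pm\cdot\vN_f=0$, while $\Gamma^\pm$ contributes nothing since $u_3^\pm=h_3^\pm=0$ there. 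You instead \emph{pull back to the fixed reference domain} $\Omega_*^\pm$ and verify tangency of the pulled-back velocity $\widetilde W^\pm=(D\Phi_f^\pm)^{-1}\bigl((\vu^\pm\mp\vh^\pm)\circ\Phi_f^\pm-\partial_t\Phi_f^\pm\bigr)$, which converts the moving-boundary cancellation into a single structural observation and reduces the rest to a textbook transport estimate on a fixed domain. Each approach encodes the same fact (the transport velocity moves with the interface), but with different overhead: the paper's direct argument is shorter and needs no estimates on $\Phi_f$, $D\Phi_f^{-1}$, or $\partial_t\Phi_f$; your pullback argument trades the Reynolds bookkeeping for the need to control those quantities, which is harmless here since $f\in H^{s+\frac12}$, $\partial_t f\in H^{s-\frac12}$ give $\Phi_f\in H^{s+1}$, $\partial_t\Phi_f\in H^s$ by elliptic regularity, hence $\widetilde W^\pm\in H^s(\Omega_*^\pm)$ as you claim. (Your pullback does introduce a multiplicative constant $C(\delta_0,\|f_*\|_{H^{s-1/2}})$ from the norm equivalence $\|\tilde\varpi^\pm\|_{H^{s-1}(\Omega_*^\pm)}\simeq\|\varpi^\pm\|_{H^{s-1}(\Omega_f^\pm)}$, so your bound has a benign prefactor not present in the paper's statement; this is cosmetic and does not affect how the proposition is used.) One small note: you write the decoupled equation for $\chi^\pm=\bar\vom^\pm-\bar\vj^\pm$ with the correct signs, which the paper leaves implicit with the phrase ``Similarly, we have''; spelling it out is a genuine improvement in readability.
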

\begin{proof}
Using the fact that $\underline{\mathbf{u}}^\pm\cdot \mathbf{N}_f=\partial_t f$ and $\underline{\mathbf{h}}^\pm\cdot \mathbf{N}_f=0$, we deduce from (\ref{equa6}) that
\begin{equation*}
\begin{aligned}
\frac{1}{2}\frac{d}{dt}&\int_{\Omega_{f}^{\pm}}|\nabla^{s-1}\varpi^{\pm}(t,x)|^2dx\\
&=\int_{\Omega_{f}^{\pm}}\nabla^{s-1}\varpi^{\pm}\cdot\nabla^{s-1}\partial_t\varpi^{\pm}dx+\frac{1}{2}\int_{\Gamma_f}|\nabla^{s-1}\varpi^{\pm}|^2(\textbf{u}^{\pm}\cdot \textbf{n})d\sigma\\
&\le\int_{\Omega_{f}^{\pm}}\nabla^{s-1}\varpi^{\pm}\cdot\nabla^{s-1}[(\textbf{u}^{\pm}-\textbf{h}^{\pm})\cdot\nabla\varpi^{\pm}]dx
+\frac{1}{2}\int_{\Gamma_f}|\nabla^{s-1}\varpi^{\pm}|^2(\textbf{u}^{\pm}\cdot \textbf{n})d\sigma\\
&\quad+C(L_1)(1+\| \bar{\vom}^\pm(t) \|_{H^{s-1}(\Omega_{f}^{\pm})}^2+\| \bar{\vj}^\pm(t) \|_{H^{s-1}(\Omega_{f}^{\pm})}^2)\\
&\le\frac{1}{2}\int_{\Omega_{f}^{\pm}}(\textbf{u}^{\pm}-\textbf{h}^{\pm})\cdot\nabla
(|\nabla^{s-1}\varpi^{\pm}|^2)dx+\frac{1}{2}\int_{\Gamma_f}|\nabla^{s-1}\varpi^{\pm}|^2(\textbf{u}^{\pm}\cdot \textbf{n})d\sigma\\
&\quad+C(L_1)(1+\| \bar{\vom}^\pm(t) \|_{H^{s-1}(\Omega_{f}^{\pm})}^2+\| \bar{\vj}^\pm(t) \|_{H^{s-1}(\Omega_{f}^{\pm})}^2)\\
&=-\frac{1}{2}\int_{\Omega_{f}^{\pm}}\mathrm{div}(\textbf{u}^{\pm}-\textbf{h}^{\pm})|\nabla^{s-1}\varpi^{\pm}|^2dx
+C(L_1)(1+\| \bar{\vom}^\pm(t) \|_{H^{s-1}(\Omega_{f}^{\pm})}^2+\| \bar{\vj}^\pm(t) \|_{H^{s-1}(\Omega_{f}^{\pm})}^2)\\
&\le C(L_1)(1+\| \bar{\vom}^\pm(t) \|_{H^{s-1}(\Omega_{f}^{\pm})}^2+\| \bar{\vj}^\pm(t) \|_{H^{s-1}(\Omega_{f}^{\pm})}^2).
\end{aligned}
\end{equation*}
Similarly, we have
\begin{equation*}
\begin{aligned}
\frac{1}{2}\frac{d}{dt}&\int_{\Omega_{f}^{\pm}}|\nabla^{s-1}(\bar{\vom}^\pm-\bar{\vj}^\pm)|^2dx
\le C(L_1)(1+\| \bar{\vom}^\pm(t) \|_{H^{s-1}(\Omega_{f}^{\pm})}^2+\| \bar{\vj}^\pm(t) \|_{H^{s-1}(\Omega_{f}^{\pm})}^2).
\end{aligned}
\end{equation*}
The desired estimate follows from the Gronwall's inequality.
\end{proof}
To solve velocity and magnetic field from the vorticity and current $(\bar{\vom}^\pm,\bar{\vj}^\pm)$, we need to verify the following compatibility conditions.
\begin{lemma}
It holds that
\begin{equation*}
\frac{d}{dt}\int_{\Gamma^\pm}\bar{\omega_3}^\pm dx'=0,\quad\quad\frac{d}{dt}\int_{\Gamma^\pm}\bar{\xi_3}^\pm dx'=0.
\end{equation*}
\end{lemma}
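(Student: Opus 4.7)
The approach is to read off the third scalar equation of each PDE in the linearized system \eqref{equa3}, restrict it to $\Gamma^\pm=\bbT^2\times\{\pm1\}$, and exploit the boundary condition $u_3^\pm=h_3^\pm=0$ on $\Gamma^\pm$ together with $\mathrm{div}\,\vu^\pm=\mathrm{div}\,\vh^\pm=0$ to recast the equation as a two-dimensional conservation law in $x'$. Once this is done, the desired identity follows by integrating in $x'\in\bbT^2$ and using periodicity.

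For the vorticity, note that on $\Gamma^\pm$ the functions $u_3^\pm$ and $h_3^\pm$ vanish identically in $x'$, so $\partial_1u_3^\pm=\partial_2u_3^\pm=0$ and similarly for $h_3^\pm$. Consequently the third components of the stretching terms simplify to $(\bar{\vom}^\pm\cdot\nabla\vu^\pm)_3=\bar{\omega}_3^\pm\partial_3u_3^\pm$ and $(\bar{\vj}^\pm\cdot\nabla\vh^\pm)_3=\bar{\xi}_3^\pm\partial_3h_3^\pm$, which by the incompressibility of $\vu^\pm,\vh^\pm$ equal $-\bar{\omega}_3^\pm(\partial_1u_1^\pm+\partial_2u_2^\pm)$ and $-\bar{\xi}_3^\pm(\partial_1h_1^\pm+\partial_2h_2^\pm)$ respectively. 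The transport terms $\vu^\pm\cdot\nabla\bar{\omega}_3^\pm$ and $\vh^\pm\cdot\nabla\bar{\xi}_3^\pm$ lose their vertical piece because $u_3^\pm=h_3^\pm=0$ on $\Gamma^\pm$. Combining these reductions collapses the third component of the first equation of \eqref{equa3} to
\begin{equation*}
\partial_t\bar{\omega}_3^\pm+\mathrm{div}_{x'}\bigl(\vu_{x'}^\pm\bar{\omega}_3^\pm-\vh_{x'}^\pm\bar{\xi}_3^\pm\bigr)=0\quad\text{on }\Gamma^\pm,
\end{equation*}
where $\vu_{x'}^\pm=(u_1^\pm,u_2^\pm)$ and $\vh_{x'}^\pm=(h_1^\pm,h_2^\pm)$. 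Integrating over the torus $\bbT^2$ kills the divergence and yields $\frac{d}{dt}\int_{\Gamma^\pm}\bar{\omega}_3^\pm\,dx'=0$.

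The same reduction applied to the second equation of \eqref{equa3} gives
\begin{equation*}
\partial_t\bar{\xi}_3^\pm+\mathrm{div}_{x'}\bigl(\vu_{x'}^\pm\bar{\xi}_3^\pm-\vh_{x'}^\pm\bar{\omega}_3^\pm\bigr)=-2\sum_{i=1}^{3}\bigl(\nabla u_i^\pm\times\nabla h_i^\pm\bigr)_3\quad\text{on }\Gamma^\pm,
\end{equation*}
so the only genuinely new point is to check that the source term integrates to zero over $\bbT^2$. For $i=3$ it vanishes pointwise on $\Gamma^\pm$ because $\partial_1u_3^\pm=\partial_2u_3^\pm=0$ there, while for $i=1,2$ one uses the elementary identity
\begin{equation*}
\partial_1u_i^\pm\,\partial_2h_i^\pm-\partial_2u_i^\pm\,\partial_1h_i^\pm=\partial_1(u_i^\pm\partial_2h_i^\pm)-\partial_2(u_i^\pm\partial_1h_i^\pm),
\end{equation*}
which is again a horizontal divergence and hence has vanishing integral over $\bbT^2$. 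This gives $\frac{d}{dt}\int_{\Gamma^\pm}\bar{\xi}_3^\pm\,dx'=0$.

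There is no real obstacle here: the argument is a direct consequence of the fixed-boundary condition $u_3^\pm=h_3^\pm=0$ combined with the divergence-free structure. The only point that calls for a moment's care is recognizing the "2D curl" structure of $(\nabla u_i^\pm\times\nabla h_i^\pm)_3$ in the current equation so that its boundary integral vanishes; once this is noted the two identities follow by a single integration by parts on the torus.
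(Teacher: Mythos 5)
Your proof is correct and complete. The paper itself simply states ``The proof is straightforward, we refer the readers to \cite{SWZ1},'' so there is no in-paper argument to compare against; your computation is precisely the natural one the authors are invoking. Two small points worth confirming, which you have both handled correctly: (i) the background fields $\vu^\pm,\vh^\pm$ around which \eqref{equa3} is linearized are assumed in Section~4 to satisfy $u_3^\pm=h_3^\pm=0$ on $\Gamma^\pm$ and $\div\vu^\pm=\div\vh^\pm=0$, which is exactly what allows you to trade $\partial_3u_3^\pm$ for $-(\partial_1u_1^\pm+\partial_2u_2^\pm)$ on the flat boundary; and (ii) the identification of $(\nabla u_i^\pm\times\nabla h_i^\pm)_3$ as the horizontal divergence $\partial_1(u_i^\pm\partial_2h_i^\pm)-\partial_2(u_i^\pm\partial_1h_i^\pm)$ (together with the pointwise vanishing of the $i=3$ term on $\Gamma^\pm$) is the one step that is not purely bookkeeping, and you have stated and justified it correctly. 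No gaps.
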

\begin{proof}
The proof is straightforward, we refer the readers to \cite{SWZ1}.
\end{proof}
\section{Construction and contraction of the Iteration Map}
Assume that
\begin{equation*}
f_0\in H^{s+1}(\mathbb{T}^2), \quad \textbf{u}_0^{\pm},\textbf{h}_0^{\pm}\in H^{s}(\Omega_{f_0}^{\pm}),
\end{equation*}
which satisfy
\begin{equation*}
\begin{aligned}
& -(1-2c_0)\le f(x')\le (1-2c_0);\\
\end{aligned}
\end{equation*}
for some constant $c_0\in(0,\frac{1}{2})$.

Let $f_*=f_0$, and $\Omega_{*}^{\pm}=\Omega_{f_0}^{\pm}$ be the reference region. The initial data $\big(f_I,(\partial_t f)_I, \vom^\pm_{*I},\vj^\pm_{*I}, \mathfrak{a}^\pm_{iI},\mathfrak{b}^\pm_{iI}\big)$ for the equivalent system is defined as follows:
\begin{equation*}
\begin{aligned}
&f_I=f_0,\quad (\partial_t f)_I=\textbf{u}_0^{\pm}\big(x'.f_0(x')\big)\cdot(-\partial_1 f_0,-\partial_2 f_0,1);\\
&\vom^\pm_{*I}=\mathrm{curl}\textbf{u}_0^{\pm},\quad\vj^\pm_{*I}=\mathrm{curl}\textbf{h}_0^{\pm};\\
&\mathfrak{a}^\pm_{iI}=\int_{\mathbb{T}^2}u_{0i}^\pm (x',\pm 1)dx', \quad\mathfrak{b}^\pm_{iI}=\int_{\mathbb{T}^2}h_{0i}^\pm (x',\pm 1)dx',
\end{aligned}
\end{equation*}
which satisfy
\begin{equation*}
\sigma^{1/2}\|f_I\|_{H^{s+1}}+\|f_I\|_{H^{s+\frac{1}{2}}}+\|(\vom^\pm_{*I},\vj^\pm_{*I})\|_{H^{s-1}(\Omega_{*}^{\pm})}
+\|(\partial_t f)_I\|_{H^{s-\frac{1}{2}}}+|\mathfrak{a}^\pm_{iI}|+|\mathfrak{b}^\pm_{iI}|\le M_0,
\end{equation*}
for some constant $M_0>0$. Then we define the following functional space.
\begin{definition}\label{def1}
Let $s\ge6$ be a integer. Given two constant $M_1,M_2>0$ with $M_1>2M_0$, we define the space $\mathcal{X}_\sigma=\mathcal{X}_\sigma(T,M_1,M_2)$ be the collection of
$(f,\vom^\pm_{*},\vj^\pm_{*},\mathfrak{a}^\pm_{i},\mathfrak{b}^\pm_{i})$ that satisfies
\begin{equation*}
\begin{aligned}
&\big(f(0),\partial_t f(0),\vom^\pm_{*}(0),\vj^\pm_{*}(0),\mathfrak{a}^\pm_{i}(0),\mathfrak{b}^\pm_{i}(0)\big)=\big(f_I,(\partial_t f)_I,\vom^\pm_{*I},\vj^\pm_{*I},\mathfrak{a}^\pm_{iI},\mathfrak{b}^\pm_{iI}\big),\\
&\|f(t,\cdot)-f_*\|_{H^{s-\frac{1}{2}}}\le \delta_0,\\
&\sup_{t\in [0,T]}\big(\sigma^{1/2}\|f\|_{H^{s+1}}(t)+\|f\|_{H^{s+\frac{1}{2}}}(t)\\
&\qquad\qquad\quad+\|(\vom^\pm_{*},\vj^\pm_{*})\|_{H^{s-1}(\Omega_{*}^{\pm})}(t)
+\|\partial_t f\|_{H^{s-\frac{1}{2}}}(t)+|\mathfrak{a}^\pm_{i}|(t)+|\mathfrak{b}^\pm_{i}|(t)\big)\le M_1,\\
&\sup_{t\in [0,T]}\big(\|(\partial_t\vom^\pm_{*},\partial_t\vj^\pm_{*})\|_{H^{s-2}(\Omega_{*}^{\pm})}(t)
+\|\partial_t^2 f\|_{H^{s-2}}(t)+|\partial_t\mathfrak{a}^\pm_{i}|(t)+|\partial_t\mathfrak{b}^\pm_{i}|(t)\big)\le M_2,\\
&\int_{\mathbb{T}^2}\partial_t f(t,x')dx'=0.
\end{aligned}
\end{equation*}
\end{definition}
Next, we will construct an iteration map
\begin{align*}
&\mathcal{F}_\sigma:\mathcal{X}_\sigma(T,M_1,M_2)\to\mathcal{X}_\sigma(T,M_1,M_2),\\
&\mathcal{F}_\sigma(f,\vom^\pm_{*},\vj^\pm_{*},\mathfrak{a}^\pm_{i},\mathfrak{b}^\pm_{i})\eqdefa(\bar{f},\bar{\vom}^\pm_{*},\bar{\vj}^\pm_{*},\bar{\mathfrak{a}}^\pm_{i},\bar{\mathfrak{b}}^\pm_{i}),
\end{align*}
with suitable constants $T,M_1,M_2$.
\subsection{Recover the bulk region, velocity and magnetic field}
We define
\begin{equation*}
\widetilde{\vom}^\pm\eqdefa P_f^{\mathrm{div}}(\vom_{*}^{\pm}\circ \Phi_f^{-1}),\quad
\widetilde{\vj}^\pm\eqdefa P_f^{\mathrm{div}}(\vj_{*}^{\pm}\circ \Phi_f^{-1}),
\end{equation*}
where $\Phi_f^\pm:\Omega_{*}^{\pm}\rightarrow\Omega_{f}^{\pm}$ is the harmonic coordinate map, and
 $P_f^{\mathrm{div}}\vom^{\pm}=\vom^{\pm}-\nabla\phi^{\pm}$ with
 \begin{equation*}
   \left\{
   	\begin{array}{ll}
   		\Delta \phi^{\pm}=\mathrm{div}\vom^{\pm}&\text{ in }\Omega_{f}^{\pm},\\
   		\partial_3 \phi^{\pm}=0&\text{ on }\Gamma^\pm,\\
   		\phi^{\pm}=0&\text{ on }\Gamma_f.
   	\end{array}
   \right.
 \end{equation*}
We introduce the projection operator $P_f^{\mathrm{div}}$ to ensure that $(\widetilde{\vom}^\pm,\widetilde{\vj}^\pm)$ satisfy conditions $(C1)$ and $(C2)$ defined in Section 3.3. It is obvious that
\begin{equation*}
\|(\widetilde{\vom}^\pm,\widetilde{\vj}^\pm)(t)\|_{H^{s-1}(\Omega_{f}^{\pm})}\le C(M_1),
\end{equation*}
\begin{equation*}
\|(\partial_t\widetilde{\vom}^\pm,\partial_t\widetilde{\vj}^\pm)(t)\|_{H^{s-2}(\Omega_{f}^{\pm})}\le C(M_1,M_2).
\end{equation*}
Then, we define $\textbf{u}^{\pm}$ and $\textbf{h}^{\pm}$ as the solution of
\begin{equation*}
\left\{\begin{aligned}
&\mathrm{curl}\textbf{u}^{\pm}=\widetilde{\vom}^\pm, \quad \mathrm{div}\textbf{u}^{\pm}=0 \quad &\mathrm{in} \quad \Omega_f^{\pm},\\
&\textbf{u}^{\pm}\cdot\textbf{N}_f=\partial_t f \quad &\mathrm{on} \quad \Gamma_f,\\
&\textbf{u}^{\pm}\cdot \mathbf{e_3}=0, \quad \int_{\Gamma^\pm}u_i dx'=\mathfrak{a}_i^\pm(i=1,2) \quad &\mathrm{on} \quad \Gamma^\pm,\\
\end{aligned}
\right.
\end{equation*}
\begin{equation*}
\left\{\begin{aligned}
&\mathrm{curl}\textbf{h}^{\pm}=\widetilde{\vj}^\pm, \quad \mathrm{div}\textbf{h}^{\pm}=0 \quad &\mathrm{in} \quad \Omega_f^{\pm},\\
&\textbf{h}^{\pm}\cdot\textbf{N}_f=0 \quad &\mathrm{on} \quad \Gamma_f,\\
&\textbf{h}^{\pm}\cdot \mathbf{e_3}=0, \quad \int_{\Gamma^\pm}h_i dx'=\mathfrak{b}_i^\pm(i=1,2) \quad &\mathrm{on} \quad \Gamma^\pm,\\
\end{aligned}
\right.
\end{equation*}
with initial data
\begin{equation*}
\textbf{u}^{\pm}(0)=\textbf{u}^{\pm}_0,\quad \textbf{h}^{\pm}(0)=\textbf{h}^{\pm}_0.
\end{equation*}
It follows from Proposition \ref{prop:div-curl} that
\begin{equation*}
\|\textbf{u}^{\pm}\|_{H^{s}(\Omega_f^{\pm})}\le C(M_1)(\|\widetilde{\vom}^\pm\|_{H^{s-1}(\Omega_{f}^{\pm})}+\|\partial_t f\|_{H^{s-\frac{1}{2}}}+|\mathfrak{a}^\pm_{1}(t)|+|\mathfrak{a}^\pm_{2}(t)|)\le C(M_1),
\end{equation*}
\begin{equation*}
\| \textbf{h}^{\pm} \|_{H^{s}(\Omega_f^{\pm})}\le C(M_1)(\|\widetilde{\vj}^\pm\|_{H^{s-1}(\Omega_{f}^{\pm})}+|\mathfrak{b}^\pm_{1}(t)|+|\mathfrak{b}^\pm_{2}(t)|)\le C(M_1).
\end{equation*}

Using the same argument to treat $\partial_t\textbf{u}^{\pm}$ and $\partial_t\textbf{h}^{\pm}$, we deduce that
\begin{equation*}
\|\partial_t\textbf{u}^{\pm}\|_{H^{s-1}(\Omega_f^{\pm})}\le C(M_1,M_2),\quad \|\partial_t\textbf{h}^{\pm}\|_{H^{s-1}(\Omega_f^{\pm})}\le C(M_1,M_2),
\end{equation*}
which implies
\begin{align*}
	\|\underline\vu^{\pm}\|_{W^{1,\infty}}(t)&\le \|\underline\vu_0^{\pm}\|_{W^{1,\infty}}+\int_{0}^t\|\underline{\partial_t\vu}^{\pm}\|_{W^{1,\infty}}(t') dt' \le \frac{M_0}{2}+TC(M_1,M_2),\\
	\|\underline\vh^{\pm}\|_{W^{1,\infty}}(t)&\le \frac{M_0}{2}+TC(M_1,M_2).
\end{align*}
Similar argument shows that
\begin{align*}
\|f\|_{W^{2,\infty}}\le \frac{M_0}{2}+TC(M_1).
\end{align*}
Besides, it holds that
\begin{equation*}
\begin{aligned}
&\|f(t)-f_0\|_{L^\infty}\le\|f(t)-f_0\|_{H^{s-\frac{1}{2}}}\le T \|\partial_t f\|_{H^{s-\frac{1}{2}}}\le TM_1.\\
\end{aligned}
\end{equation*}
By choosing $T$ small enough, we have
\begin{equation*}
TM_1\le min\{\delta_0,c_0\},\quad TC(M_1)+TC(M_1,M_2)\le \frac{M_0}{2}, \quad TC(M_1,M_2)\le c_0.
\end{equation*}
Taking $L_0=M_0,L_1=M_1,L_2=C(M_1,M_2)$, we conclude that for $\forall t\in [0,T]$:
\begin{equation*}
\begin{aligned}
&1. -(1-c_0)\le f(t,x')\le (1-c_0),\\
&2. \|(\underline\vu^{\pm},\underline\vh^{\pm})\|_{W^{1,\infty}}(t)+\|f\|_{W^{2,\infty}}(t)\le L_0,\\
&3. \|f(t)-f_*\|_{H^{s-\frac{1}{2}}}\le \delta_0,\\
&4. \sigma^{1/2}\|f\|_{H^{s+1}}(t)+\|f\|_{H^{s+\frac{1}{2}}}(t)+\|\partial_t f\|_{H^{s-\frac{1}{2}}}(t)+\|\textbf{u}^{\pm}\|_{H^{s}(\Omega_f^{\pm})}(t)+\|\textbf{h}^{\pm}\|_{H^{s}(\Omega_f^{\pm})}(t)
\le L_1,\\
&5.\|(\underline{\partial_t \vu}^{\pm},\underline{\partial_t \vh}^{\pm})\|_{W^{1,\infty}}(t)\le L_2.\\
\end{aligned}
\end{equation*}
\subsection{Defining the Iteration Map}
Given $(f,\textbf{u}^{\pm},\textbf{h}^{\pm})$ which is constructed from $(f,\vom^\pm_{*},\vj^\pm_{*},\mathfrak{a}^\pm_{i},\mathfrak{b}^\pm_{i})$. Let $\bar{f_1}$ and $(\bar{\vom}^\pm,\bar{\vj}^\pm)$ be the solutions of the linearized systems \eqref{system2} and \eqref{equa3} with initial data
\begin{equation*}
\big(\bar{f_1}(0),\bar{\theta}(0),\bar{\vom}^\pm(0),\bar{\vj}^\pm(0)\big)=\big(f_0,(\partial_t f)_I,\vom^\pm_{*I},\vj^\pm_{*I}\big).
\end{equation*}
We define
\begin{equation*}
\begin{aligned}
&\bar{\vom}^\pm_*=\bar{\vom}^\pm\circ \Phi_f^\pm, \quad \bar{\vj}^\pm_*=\bar{\vj}^\pm\circ \Phi_f^\pm,\\
&\bar{\mathfrak{a}_i}^\pm(t)={\mathfrak{a}_i}^\pm(0)-\int_{0}^t\int_{\Gamma^\pm}\sum^3_{j=1}(u_j^\pm\partial_j u_i^\pm-h_j^\pm\partial_j h_i^\pm)(x',t')dx'dt',\\
&\bar{\mathfrak{b}_i}^\pm(t)={\mathfrak{b}_i}^\pm(0)-\int_{0}^t\int_{\Gamma^\pm}\sum^3_{j=1}(u_j^\pm\partial_j h_i^\pm-h_j^\pm\partial_j u_i^\pm)(x',t')dx'dt'.
\end{aligned}
\end{equation*}
Then we have the iteration map $\mathcal{F}_\sigma$ as follows
\begin{equation*}
\mathcal{F}_\sigma(f,\vom^\pm_{*},\vj^\pm_{*},\mathfrak{a}^\pm_{i},\mathfrak{b}^\pm_{i})\overset{\text{def}}{=}
(\bar{f},\bar{\vom}^\pm_{*},\bar{\vj}^\pm_{*},\bar{\mathfrak{a}}^\pm_{i},\bar{\mathfrak{b}}^\pm_{i}),
\end{equation*}
where $\bar{f}(t,x')=\mathcal{P}\bar{f_1}(t,x')+\langle f_0\rangle$. Hence, $\langle\bar{f} \rangle=\langle f_0\rangle$ and
$\int_{\mathbb{T}^2}\partial_t\bar{f}(t,x')dx'=0$ for $t\in[0,T]$.
\begin{proposition}
There exists $M_1,M_2,T>0$ depending on $ \delta_0, M_0, \sigma$ so that $\mathcal{F}_\sigma$ is a map from $\mathcal{X}_\sigma(T,M_1,M_2)$ to itself.
\begin{proof}
According to Definition \ref{def1}, the initial conditions are automatically satisfied. It follows from Proposition \ref{pro-f} and Proposition \ref{pro-curl} that
\begin{align*}
\sup_{t\in [0,T]}\big(\sigma^{\frac{1}{2}}\|\bar{f}\|_{H^{s+1}}(t)&+\|\bar{f}\|_{H^{s+\frac{1}{2}}}(t)+\|\bar{\vom}^\pm_{*}\|_{H^{s-1}(\Omega_{*}^{\pm})}(t)\\
&+\|\bar{\vj}^\pm_{*}\|_{H^{s-1}(\Omega_{*}^{\pm})}(t)
+\|\partial_t \bar{f}\|_{H^{s-\frac{1}{2}}}(t)\big)\le C(c_0,\sigma,M_0)e^{C(\sigma,M_1,M_2)T}.	
\end{align*}
We first take $M_1$ large enough such that $ C(c_0,\sigma,M_0)<\frac{M_1}{2}$, then let $T=\frac{1}{10}C(\sigma,M_1,M_2)$ which is till to be determined. Thus, it is straightforward to derive from $(\ref{system2}),(\ref{equa3})$ and the above estimate that
\begin{equation*}
\sup_{t\in [0,T]}\big(\|\partial_t^2 \bar{f}\|_{H^{s-2}}(t)+\|\partial_t\bar{\vom}^\pm_{*}\|_{H^{s-2}(\Omega_{*}^{\pm})}(t)+
\|\partial_t\bar{\vj}^\pm_{*}\|_{H^{s-2}(\Omega_{*}^{\pm})}(t)\big)\le C(M_1).
\end{equation*}
It is clear that
\begin{equation*}
\begin{aligned}
&|\bar{\mathfrak{a}_i}^\pm(t)|+|\bar{\mathfrak{b}_i}^\pm(t)|\le M_0+TC(M_1),\\
&|\partial_t\bar{\mathfrak{a}_i}^\pm(t)|+|\partial_t\bar{\mathfrak{b}_i}^\pm(t)|\le C(M_1),\\
&\|\bar{f}(t)-f_*\|_{H^{s-\frac{1}{2}}}\le \int_0^t\|(\partial_t \bar{f})(t')\|_{H^{s-\frac{1}{2}}}dt'.
\end{aligned}
\end{equation*}
At last we take $M_2=C(M_1)$ and meanwhile $T$ is determined. One can see that all the conditions in Definition \ref{def1} are satisfied. This complete the proof.
\end{proof}
\end{proposition}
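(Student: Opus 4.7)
The plan is to verify the five conditions listed in Definition \ref{def1} one at a time for the image $(\bar f,\bar{\vom}^\pm_*,\bar{\vj}^\pm_*,\bar{\mathfrak{a}}^\pm_i,\bar{\mathfrak{b}}^\pm_i)$. The initial-data match is automatic from the fact that the linearized systems \eqref{system2} and \eqref{equa3} are initialized with the designated data and that $\bar{\mathfrak{a}}(0)=\mathfrak{a}(0)$, $\bar{\mathfrak{b}}(0)=\mathfrak{b}(0)$. For the higher-norm bounds, I would first apply the construction of Section 5.1 to the input datum: this produces a velocity field $\vu^\pm$ and magnetic field $\vh^\pm$ satisfying the hypotheses of Section 4 with $L_0=M_0$, $L_1=M_1$ and $L_2=C(M_1,M_2)$, provided $T$ is taken small enough. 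Then Proposition \ref{pro-f} applied with the forcing estimated by Lemma \ref{lem-g} gives control of $\sigma^{1/2}\|\bar f\|_{H^{s+1}}+\|\bar f\|_{H^{s+1/2}}+\|\partial_t\bar f\|_{H^{s-1/2}}$, and Proposition \ref{pro-curl} gives control of $\|(\bar\vom^\pm,\bar\vj^\pm)\|_{H^{s-1}}$, both of the form $C(c_0,\sigma,M_0)\,e^{C(c_0,\sigma,M_1,M_2)T}$.

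The choice of constants must proceed in a precise order. First fix $M_1$ so large that $C(c_0,\sigma,M_0)\le M_1/2$; at this stage $M_1$ depends on $c_0,\sigma,M_0,\delta_0$ but not on $T$. Next, read off $\partial_t^2\bar f$ and $(\partial_t\bar\vom^\pm_*,\partial_t\bar\vj^\pm_*)$ directly from the equations \eqref{system2} and \eqref{equa3}: these involve at most one extra spatial derivative on quantities already controlled by $M_1$, so one obtains $\|\partial_t^2\bar f\|_{H^{s-2}}+\|(\partial_t\bar\vom^\pm_*,\partial_t\bar\vj^\pm_*)\|_{H^{s-2}}\le C(M_1)$, which fixes $M_2=C(M_1)$. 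The integral representations defining $\bar{\mathfrak{a}}^\pm_i,\bar{\mathfrak{b}}^\pm_i$ give $|\bar{\mathfrak{a}}|+|\bar{\mathfrak{b}}|\le M_0+TC(M_1)$ and $|\partial_t\bar{\mathfrak{a}}|+|\partial_t\bar{\mathfrak{b}}|\le C(M_1)$ directly. Finally $T$ is chosen small enough that the exponential $e^{C(c_0,\sigma,M_1,M_2)T}\le 2$, that $TM_1\le\min\{\delta_0,c_0\}$ (ensuring both $\|\bar f(t)-f_*\|_{H^{s-1/2}}\le\int_0^t\|\partial_t\bar f\|_{H^{s-1/2}}\le\delta_0$ and the reference-domain constraint), and that the auxiliary bounds coming from Section 5.1 close.

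The zero-mean condition $\int_{\bbT^2}\partial_t\bar f\,dx'=0$ is preserved because $\bar f=\mathcal{P}\bar f_1+\langle f_0\rangle$, so $\partial_t\bar f=\mathcal{P}\partial_t\bar f_1$ has zero mean by definition of $\mathcal{P}$. The main obstacle is the circular dependence of constants: the bound on $\bar f$ coming from Proposition \ref{pro-f} contains $M_2$ inside the exponent, but $M_2$ itself is defined from the equation for $\partial_t^2\bar f$ which requires $M_1$ already chosen. The way out is the strict ordering $M_1 \;\to\; M_2=C(M_1) \;\to\; T$, choosing each constant before it is needed downstream. A secondary subtlety is that the energy estimate of Proposition \ref{pro-f} depends on $\sigma$ through the constant $C(\sigma)$ needed to dominate the possibly-negative quadratic terms; this is harmless here since $\sigma>0$ is fixed, but it forces the allowable $M_1$ and $T$ to depend on $\sigma$, which is why the resulting time of existence in Theorem \ref{thm:1} is not uniform in $\sigma$ (and one must argue differently under the stability condition \eqref{con-sta} to obtain the $\sigma$-uniform result of Theorem \ref{theo2}).
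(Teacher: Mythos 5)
Your proposal is correct and follows essentially the same route as the paper: invoke Propositions \ref{pro-f} and \ref{pro-curl} (together with Lemma \ref{lem-g}) for the $M_1$-level bound of the form $C(c_0,\sigma,M_0)e^{C(\sigma,M_1,M_2)T}$, read the time-derivative bounds $\|\partial_t^2\bar f\|_{H^{s-2}}+\|(\partial_t\bar{\vom}^\pm_*,\partial_t\bar{\vj}^\pm_*)\|_{H^{s-2}}\le C(M_1)$ directly off the evolution equations \eqref{system2} and \eqref{equa3} to fix $M_2=C(M_1)$, and finally shrink $T$; your explicit $M_1\to M_2=C(M_1)\to T$ ordering and the remark on preservation of $\int_{\bbT^2}\partial_t\bar f\,dx'=0$ match the paper's more terse presentation. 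The only small imprecision is the claim that $\partial_t^2\bar f$ involves ``at most one extra spatial derivative'': the surface-tension term $\tfrac{\sigma}{2}T_\lambda T_l\bar f$ is of order three, but it is compensated by the weighted bound $\sigma^{1/2}\|\bar f\|_{H^{s+1}}\le M_1$, so the conclusion $\le C(M_1)$ still stands for fixed $\sigma>0$.
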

\subsection{Contraction of the iteration map} Now, we show that $\mathcal{F}_\sigma$ is contract in $\mathcal{X}_\sigma(T,M_1,M_2)$. Suppose $(f^A,\vom^{\pm A}_{*},\vj^{\pm A}_{*},\mathfrak{a}^{\pm A}_{i},\mathfrak{b}^{\pm A}_{i})$ and
$(f^B,\vom^{\pm B}_{*},\vj^{\pm B}_{*},\mathfrak{a}^{\pm B}_{i},\mathfrak{b}^{\pm B}_{i})$ are two elements
in $\mathcal{X}_\sigma(T,M_1,M_2)$ and
\begin{equation*}
(\bar{f}^{\mathcal C},\bar{\vom}^{\pm\mathcal C}_{*},\bar{\vj}^{\pm\mathcal C}_{*},\bar{\mathfrak{a}}^{\pm\mathcal C}_{i},\bar{\mathfrak{b}}^{\pm\mathcal C}_{i})=\mathcal{F}(f^{\mathcal C},\vom^{\pm\mathcal C}_{*},\vj^{\pm\mathcal C}_{*},\mathfrak{a}^{\pm\mathcal C}_{i},\mathfrak{b}^{\pm\mathcal C}_{i})
\end{equation*}
for $\mathcal C = A,B$. We denote by $g^D$ the difference $g^A-g^B$.
\begin{proposition}\label{prop-constract}
There exists $T=T(c_0,\sigma,\delta_0,M_0)>0$ so that
\begin{equation*}
\begin{aligned}
\bar{E}^D\eqdefa&\sup_{t\in [0,T]}\Big(\sigma^{\frac{1}{2}}\|\bar{f}^D\|_{H^{s-1}}(t)+\|\bar{f}^D\|_{H^{s-\frac{3}{2}}}(t)+\|\bar{\vom}^{\pm D}_{*}\|_{H^{s-3}(\Omega_{*}^{\pm})}(t)+\|\bar{\vj}^{\pm D}_{*}\|_{H^{s-3}(\Omega_{*}^{\pm})}(t)\\
&\quad+\|(\partial_t \bar{f}^D)\|_{H^{s-\frac{5}{2}}}(t)+|\bar{\mathfrak{a}_i}^{\pm D}|(t)+|\bar{\mathfrak{b}_i}^{\pm D}|(t)\Big)\\
\le&\frac{1}{2}\sup_{t\in [0,T]}\Big(\sigma^{\frac{1}{2}}\|f^D\|_{H^{s-1}}(t)+\|f^D\|_{H^{s-\frac{3}{2}}}(t)+\|\vom^{\pm D}_{*}\|_{H^{s-3}(\Omega_{*}^{\pm})}(t)
+\|\vj^{\pm D}_{*}\|_{H^{s-3}(\Omega_{*}^{\pm})}(t)\\
&\quad+\|(\partial_t f^D)\|_{H^{s-\frac{5}{2}}}(t)+|\mathfrak{a}^{\pm D}_{i}|(t)+|\mathfrak{b}^{\pm D}_{i}|(t)\Big)\eqdefa  E^D.
\end{aligned}
\end{equation*}
\end{proposition}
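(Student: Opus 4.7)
The strategy is to derive a linearized system satisfied by the differences $\bar f_1^D, \bar\vom^{\pm D}, \bar\vj^{\pm D}$ and then run a lower-regularity version of the energy estimates from Proposition \ref{pro-f} and Proposition \ref{pro-curl}, trading two derivatives of regularity (working with $H^{s-1}$ for $f$, $H^{s-3/2}$ for $\theta$, $H^{s-3}$ for vorticity/current) for the Lipschitz dependence on the coefficients. First, I would reconstruct $(\vu^{\pm\mathcal C}, \vh^{\pm\mathcal C}, \Phi_{f^{\mathcal C}}^\pm)$ for $\mathcal C = A,B$ as in Section 5.1 and use standard elliptic/harmonic-coordinate estimates (together with the stability of the div-curl system and the Dirichlet-Neumann operator from Section 2) to obtain
\begin{equation*}
\|(\vu^{\pm D}, \vh^{\pm D})\|_{H^{s-2}(\Omega_*^\pm)} + \|(\underline\vu^{\pm D},\underline\vh^{\pm D})\|_{H^{s-5/2}} \le C(M_1)\,E^D,
\end{equation*}
with a similar bound for $\Phi_{f^A}^\pm - \Phi_{f^B}^\pm$. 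This transfers the control of the input data to genuine bulk and boundary quantities that appear as coefficients in the linearized system.

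Next, subtracting the two instances of \eqref{system2}–\eqref{system3} yields an equation for $\bar f_1^D$ of the schematic form
\begin{equation*}
\partial_t^2 \bar f_1^D + \tfrac{\sigma}{2}T_{\lambda^A} T_{l^A}\bar f_1^D + 2 w_i^A \partial_i \partial_t \bar f_1^D - \tfrac12 \mathfrak m_{ij}^A \partial_i\partial_j \bar f_1^D = \mathfrak g^D + \tfrac\sigma4 R^D + \mathcal R^D,
\end{equation*}
where $\mathcal R^D$ collects all commutator-type remainders caused by replacing the $B$–coefficients by the $A$–coefficients, e.g.\
$-\tfrac{\sigma}{2}\bigl(T_{\lambda^A}T_{l^A}-T_{\lambda^B}T_{l^B}\bigr)\bar f_1^B$, $-2(w_i^A-w_i^B)\partial_i\partial_t\bar f_1^B$, and so on. The principal term of $T_{\lambda^A}T_{l^A}-T_{\lambda^B}T_{l^B}$ is a paradifferential operator of order $3$ whose symbol is smooth in $\nabla f$ and $\partial^2 f$, so Proposition \ref{symbol.pro1} and the $H^s$ boundedness of paraproducts yield
\begin{equation*}
\bigl\|\bigl(T_{\lambda^A}T_{l^A}-T_{\lambda^B}T_{l^B}\bigr)\bar f_1^B\bigr\|_{H^{s-5/2}} \le C(M_1)\,\|f^D\|_{H^{s-1}}\,\|\bar f_1^B\|_{H^{s+1}} \le C(M_1)\,E^D.
\end{equation*}
The remaining pieces of $\mathcal R^D$ and of $\mathfrak g^D - \mathfrak g^B_{\text{evaluated at }A}$ are estimated analogously, using the Lipschitz bounds on $\vu^\pm, \vh^\pm, \Phi_f^\pm, \mathcal N_f^\pm$ and the already-established control of the differences in the bulk. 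I would then run the symmetrized energy functional of Proposition \ref{pro-f}, only with $T_\beta$ replaced by $T_{\beta'}$ of order $s - 5/2$, to obtain
\begin{equation*}
\sup_{[0,T]}\Bigl(\sigma^{1/2}\|\bar f_1^D\|_{H^{s-1}} + \|\bar f_1^D\|_{H^{s-3/2}} + \|\partial_t \bar f_1^D\|_{H^{s-5/2}}\Bigr) \le C(c_0,\sigma,M_0)\sqrt{T}\,\,e^{C(M_1,M_2)T}\, E^D.
\end{equation*}

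For $(\bar\vom^{\pm D}, \bar\vj^{\pm D})$ I would subtract \eqref{equa3} for $A$ and $B$, push the system back to the reference domain via $\Phi_{f^A}^\pm$ so that it becomes a linear transport system with source terms that are differences of the coefficients, and apply an $H^{s-3}$ energy estimate of the same type as Proposition \ref{pro-curl} (the boundary term vanishes because $\underline\vh^\pm\cdot\vN_f=0$, and the moving-domain term is handled by pulling back to $\Omega_*^\pm$). The averages $\bar{\mathfrak a}_i^{\pm D}, \bar{\mathfrak b}_i^{\pm D}$ are controlled directly from \eqref{eq-beta}–\eqref{eq-gamma} by a factor of $T$ times $C(M_1)\,E^D$. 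Finally $\bar f^D = \mathcal P\bar f_1^D$ so $\|\bar f^D\|\le \|\bar f_1^D\|$ in every relevant norm. Combining these estimates gives $\bar E^D \le C(c_0,\sigma,\delta_0,M_0,M_1,M_2)(\sqrt T + T)\, E^D$, and choosing $T$ small enough makes the constant at most $\tfrac12$.

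The main obstacle will be the remainder $(T_{\lambda^A}T_{l^A}-T_{\lambda^B}T_{l^B})\bar f_1^B$: it has the top order $3$ in $\bar f_1^B$ but only one derivative on $f^D$, so one must carefully allocate derivatives between the two factors and use the full strength of Proposition \ref{symbol.pro1} and Lemma \ref{symbol.lem1}–Lemma \ref{symbol.lem3} at the difference level, together with the a priori bound $\sigma^{1/2}\|\bar f_1^B\|_{H^{s+1}}\le M_1$, to close the estimate without blowing up in $\sigma$. Once that term is handled the rest of the coefficients are lower order and yield to routine Moser-type estimates on the reference domain.
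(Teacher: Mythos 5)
Your proposal is correct and follows essentially the same route as the paper: pull everything back to the reference domain via the harmonic coordinate map, bound $\|\vu^{\pm D}_*\|_{H^{s-2}}$, $\|\vh^{\pm D}_*\|_{H^{s-2}}$ by $CE^D$ from the difference div-curl system, write the equation for $\bar f_1^D$ with the $A$-coefficients as the principal part and all coefficient-difference terms in a remainder, estimate the key term $\frac\sigma2(T_{\lambda^A}T_{l^A}-T_{\lambda^B}T_{l^B})\bar f_1^B$ by Lipschitz dependence of the symbols on $\nabla f$, run the symmetrized energy with $T_\beta$ replaced by $T_{\beta^A}$ of order $s-\tfrac52$, and use the vanishing initial data together with Gronwall to produce a factor tending to $0$ as $T\to0$. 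The one minor difference is that the paper bounds the paradifferential remainder by $\|\nabla f^D\|_{L^\infty}\|\bar f^B\|_{H^{s+1/2}}$, which is $\sigma$-uniform, whereas your bound via $\|f^D\|_{H^{s-1}}\|\bar f_1^B\|_{H^{s+1}}$ costs a $\sigma^{-1}$ factor absorbed by the $\sigma$ prefactor; both close since the proposition permits $T$ to depend on $\sigma$.
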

\begin{proof}
By elliptic estimates, we have
\begin{equation*}
\|\Phi^\pm_{f^A}-\Phi^\pm_{f^B}\|_{H^{s-2}(\Omega_{*}^{\pm})}\le C(M_1)\|f^A-f^B\|_{H^{s-\frac{3}{2}}}\le CE^D.
\end{equation*}
For $\mathcal C=A,B$ we define
\begin{equation*}
\textbf{u}^{\pm\mathcal C}_*=\textbf{u}^{\pm}\circ\Phi^\pm_{f^{\mathcal C}},\quad \textbf{h}^{\pm\mathcal C}_*=\textbf{h}^{\pm}\circ\Phi^\pm_{f^{\mathcal C}},
\end{equation*}
and claim that
\begin{equation*}
\|\textbf{u}^{\pm\mathcal C}_*\|_{H^{s-2}(\Omega_{*}^{\pm})}+\|\textbf{h}^{\pm\mathcal C}_*\|_{H^{s-2}(\Omega_{*}^{\pm})}\le CE^D.
\end{equation*}
Indeed, for a vector field $\textbf{v}^{\pm}_*$ defined on $\Omega_{*}^{\pm}$, we define
\begin{equation*}
\begin{aligned}
&\mathrm{curl}_{\mathcal C}\textbf{v}^{\pm}_*=\Big(\mathrm{curl}(\textbf{v}^{\pm}_*\circ(\Phi^\pm_{f^{\mathcal C}})^{-1})\Big)\circ\Phi^\pm_{f^{\mathcal C}},\\
&\mathrm{div}_{\mathcal C}\textbf{v}^{\pm}_*=\Big(\mathrm{div}(\textbf{v}^{\pm}_*\circ(\Phi^\pm_{f^{\mathcal C}})^{-1})\Big)\circ\Phi^\pm_{f^{\mathcal C}}.\\
\end{aligned}
\end{equation*}
Thus, for $\mathcal C=A,B$, it holds that
\begin{equation*}
\left\{\begin{aligned}
&\mathrm{curl}_{\mathcal C}\textbf{u}^{\pm\mathcal C}_*=\widetilde{\vom}_*^{\pm\mathcal C}  \quad &\mathrm{in} \quad \Omega_*^{\pm},\\
&\mathrm{div}_{\mathcal C}\textbf{u}^{\pm\mathcal C}_*=0 \quad &\mathrm{in} \quad \Omega_*^{\pm},\\
&\textbf{u}^{\pm\mathcal C}_*\cdot\textbf{N}_{f^{\mathcal C}}=\partial_t f^{\mathcal C} \quad &\mathrm{on} \quad \Gamma_*,\\
&\textbf{u}^{\pm\mathcal C}\cdot \mathbf{e_3}=0, \quad \int_{\Gamma^\pm}u_i^{\pm\mathcal C} dx'=\mathfrak{a}_i^{\pm\mathcal C}(i=1,2) \quad &\mathrm{on} \quad \Gamma^\pm.
\end{aligned}
\right.
\end{equation*}
Accordingly, we deduce that
\begin{equation*}
\left\{\begin{aligned}
&\mathrm{curl}_A\textbf{u}^{\pm D}_*=\widetilde{\vom}_*^{\pm D}+(\mathrm{curl}_B-\mathrm{curl}_A)\textbf{u}^{\pm B}_*  \quad &\mathrm{in} \quad \Omega_*^{\pm},\\
&\mathrm{div}_A\textbf{u}^{\pm D}_*=(\mathrm{div}_B-\mathrm{div}_A)\textbf{u}^{\pm B}_* \quad &\mathrm{in} \quad \Omega_*^{\pm},\\
&\textbf{u}^{\pm D}_*\cdot\textbf{N}_{f^A}=\partial_t f^D+\textbf{u}^{\pm B}_*\cdot(\textbf{N}_{f^B}-\textbf{N}_{f^A}) \quad &\mathrm{on} \quad \Gamma_*,\\
&\textbf{u}^{\pm D}_*\cdot \mathbf{e_3}=0, \quad \int_{\Gamma^\pm}u_i^{\pm D} dx'=\mathfrak{a}_i^{\pm D}(i=1,2) \quad &\mathrm{on} \quad \Gamma^\pm.\\
\end{aligned}
\right.
\end{equation*}
Direct calculation shows that
\begin{equation*}
\begin{aligned}
&\|(\mathrm{curl}_B-\mathrm{curl}_A)\textbf{u}^{\pm B}_*\|_{H^{s-3}(\Omega_{*}^{\pm})}\le C\|\Phi^\pm_{f^A}-\Phi^\pm_{f^B}\|_{H^{s-2}(\Omega_{*}^{\pm})}\le C \|f^D\|_{H^{s-\frac{3}{2}}}\le CE^D,\\
&\|(\mathrm{curl}_B-\mathrm{curl}_A)\textbf{u}^{\pm B}_*\|_{H^{s-3}(\Omega_{*}^{\pm})}\le CE^D,\\
&\|\textbf{u}^{\pm B}_*\cdot(\textbf{N}_{f^B}-\textbf{N}_{f^A})\|_{H^{s-\frac{5}{2}}}\le CE^D.\\
\end{aligned}
\end{equation*}
Then, we get by Proposition \ref{prop:div-curl} that
\begin{equation*}
\|\textbf{u}^{\pm D}_*\|_{H^{s-2}(\Omega_{*}^{\pm})}\le C(\|\widetilde{\vom}_*^{\pm D}\|_{H^{s-3}(\Omega_{*}^{\pm})}
+\|\partial_t f^D\|_{H^{s-\frac{5}{2}}}+E^D)\le CE^D.
\end{equation*}
Similarly, we have
\begin{equation*}
\|\textbf{h}^{\pm D}_*\|_{H^{s-2}(\Omega_{*}^{\pm})}\le CE^D.
\end{equation*}
Recalling \eqref{system1}, we deduce that
\begin{equation}\label{equa5}
\begin{aligned}
\partial_t \bar{f}_1^D=&\bar{\theta}^D,\\
\partial_t \bar{\theta}^D=&-\frac{\sigma}{2}(T_{\lambda^A }T_{l^A}\bar{f}_1^D)
-\Big((\underline{u}_1^{+A} +\underline{u}_1^{-A})\partial_1 \bar{\theta}^D+(\underline{u}_2^{+A} +\underline{u}_2^{-A})\partial_2 \bar{\theta}^D\Big)\\
&-\frac{1}{2}\sum_{i,j=1,2}\Big( \underline{u}_i^{+A}\underline{u}_j^{+A} -\underline{h}_i^{+A}\underline{h}_j^{+A}+
  \underline{u}_i^{-A}\underline{u}_j^{-A} -\underline{h}_i^{-A}\underline{h}_j^{-A}\Big)\partial_i\partial_j  \bar{f}_1^D+\mathfrak{R},
\end{aligned}
\end{equation}
where
\begin{equation*}
\begin{aligned}
\mathfrak{R}=&-\frac{\sigma}{2}(T_{\lambda^A }T_{l^A}-T_{\lambda^B }T_{l^B})\bar{f}_1^B\\
&-\Big((\underline{u}_1^{+D} +\underline{u}_1^{-D})\partial_1 \bar{\theta}^B+(\underline{u}_2^{+D} +\underline{u}_2^{-D})\partial_2 \bar{\theta}^B\Big)\\
&-\frac{1}{2}\sum_{i,j=1,2}\Big[( \underline{u}_i^{+A}\underline{u}_j^{+A} -\underline{h}_i^{+A}\underline{h}_j^{+A}+
 \underline{u}_i^{-A}\underline{u}_j^{-A} -\underline{h}_i^{-A}\underline{h}_j^{-A})\\
 &-( \underline{u}_i^{+B}\underline{u}_j^{+B} -\underline{h}_i^{+B}\underline{h}_j^{+B}+
  \underline{u}_i^{-B}\underline{u}_j^{-B} -\underline{h}_i^{-B}\underline{h}_j^{-B})\Big]\partial_i\partial_j  \bar{f}_1^B\\
 &+\mathfrak{g}^A-\mathfrak{g}^B,
\end{aligned}
\end{equation*}
and for $\mathcal C=A,B$,
\begin{equation*}
\begin{aligned}
\mathfrak{g}^{\mathcal C}=&+\frac{1}{2}(\mathcal{N}_{f^{\mathcal C}}^+ -\mathcal{N}_{f^{\mathcal C}}^-)\widetilde{\mathcal{N}_{f^{\mathcal C}}}^{-1}
\mathcal{P}\Big(\sum_{i,j=1,2}(\underline{u}_i^{+\mathcal C}\underline{u}_j^{+\mathcal C}
-\underline{h}_i^{+\mathcal C}\underline{h}_j^{+\mathcal C}-
\underline{u}_i^{-\mathcal C}\underline{u}_j^{-\mathcal C}
+\underline{h}_i^{-\mathcal C}\underline{h}_j^{-\mathcal C} )\partial_i\partial_j  f^{\mathcal C}\Big)\\
&+(\mathcal{N}_f^{+\mathcal C} -\mathcal{N}_f^{-\mathcal C})\widetilde{\mathcal{N}_{f^{\mathcal C}}}^{-1}\mathcal{P}
\Big((\underline{u}_1^{+\mathcal C} - \underline{u}_1^{-\mathcal C})\partial_1 \theta^{\mathcal C}+(\underline{u}_2^{+\mathcal C} - \underline{u}_2^{-\mathcal C})\partial_2 \theta^{\mathcal C}\Big)\\
&-\frac{1}{2}\Big(\mathbf{N}_{f^{\mathcal C}}\cdot\underline{\nabla( p_{\textbf{u}^{+\mathcal C},\textbf{u}^{+\mathcal C}}-p_{\textbf{h}^{+\mathcal C},\textbf{h}^{+\mathcal C}})}+\mathbf{N}_{f^{\mathcal C}}\cdot\underline{\nabla( p_{\textbf{u}^{-\mathcal C},\textbf{u}^{-\mathcal C}}-p_{\textbf{h}^{-\mathcal C},\textbf{h}^{-\mathcal C}})}\Big)\\
&+\frac{1}{2}(\mathcal{N}_{f^{\mathcal C}}^+ -\mathcal{N}_{f^{\mathcal C}}^-)\widetilde{\mathcal{N}_{f^{\mathcal C}}}^{-1}\mathcal{P}\mathbf{N}_{f^{\mathcal C}}\cdot
\underline{\nabla(p_{\textbf{u}^{+\mathcal C},\textbf{u}^{+\mathcal C}}-p_{\textbf{h}^{+\mathcal C},\textbf{h}^{+\mathcal C}}-
p_{\textbf{u}^{-\mathcal C},\textbf{u}^{-\mathcal C}}+p_{\textbf{h}^{-\mathcal C},\textbf{h}^{-\mathcal C}})}\\
&+\frac{\sigma}{4}R^{+\mathcal C}.
\end{aligned}
\end{equation*}
It is easy to check that
\begin{equation*}
\|\mathfrak{R}\|_{H^{s-\frac{5}{2}}}\le CE^D.
\end{equation*}
We give the estimate of $\frac{\sigma}{2}(T_{\lambda^A }T_{l^A}-T_{\lambda^B }T_{l^B})\bar{f}^B$ for example, and the other terms can be treated in a similar way.
\begin{align}\label{equaR}
(T_{\lambda^A }T_{l^A}-T_{\lambda^B }T_{l^B})\bar{f}^B=&(T_{\lambda^{(1)A}}-T_{\lambda^{(1)B}})T_{l^{A}}\bar{f}^B+(T_{\lambda^{(0)A}}-T_{\lambda^{(0)B}})T_{l^{A}}\bar{f}^B\\
&+T_{\lambda^B}(T_{l^{(2)A}}-T_{l^{(2)B}})\bar{f}^B+T_{\lambda^B}(T_{l^{(1)A}}-T_{l^{(1)B}})\bar{f}^B.\nonumber
\end{align}
Recalling that $\lambda^{(1)}=\sqrt{(1+|\nabla f|^2)|\xi|^2-(\nabla f\cdot\xi)^2}$, we have
\begin{align*}
	&\lambda^{(1)A}-\lambda^{(1)B}\\
	=&\sqrt{(1+|\nabla f^A|^2)|\xi|^2-(\nabla f^A\cdot\xi)^2}-\sqrt{(1+|\nabla f^B|^2)|\xi|^2-(\nabla f^B\cdot\xi)^2}\\
	=&\frac{(\nabla f^A-\nabla f^B)\cdot(\nabla f^A+\nabla f^B)|\xi|^2-(\nabla f^A-\nabla f^B)\cdot\xi(\nabla f^A+\nabla f^B)\cdot\xi}{\sqrt{(1+|\nabla f^A|^2)|\xi|^2-(\nabla f^A\cdot\xi)^2}+\sqrt{(1+|\nabla f^B|^2)|\xi|^2-(\nabla f^B\cdot\xi)^2}}.
\end{align*}
It follows that
\begin{align*}
\|(T_{\lambda^{(1)A}}-T_{\lambda^{(1)B}})T_{l^{A}}\bar{f}^B\|_{H^{s-\frac{5}{2}}}
\le& C\|\nabla f^A-\nabla f^B\|_{L^\infty}\|T_{l^{A}}\bar{f}^B\|_{H^{s-\frac{3}{2}}}\\
\le& C\|\nabla f^A-\nabla f^B\|_{L^\infty}\|\bar{f}^B\|_{H^{s+\frac{1}{2}}}\le CE^D.
\end{align*}
In this way , one can deduce that
\begin{align*}
	\|(T_{\lambda^A }T_{l^A}-T_{\lambda^B }T_{l^B})\bar{f}^B\|_{H^{s-\frac{5}{2}}}\le CE^D.
\end{align*}

Now, we define
\begin{align*}
\bar{F}_s(\partial_t \bar{f_1}^D,\bar{f_1}^D)\eqdefa&\frac{\sigma}{2}\|T_{\gamma^A} T_{\beta^A} T_{q^A} \bar{f}_1^D\|^2_{L^2}
+\|(\partial_t+ w_i^A\partial_i )T_{\beta^A} T_{q^A}  \bar{f}_1^D\|^2_{L^2}
-\frac{1}{2}\|\underline{u}_i^{+A}\partial_iT_{\beta^A} T_{q^A}  \bar{f}_1\|_{L^2}^2\\
&-\frac{1}{2}\|\underline{u}_i^{-A}\partial_iT_{\beta^A} T_{q^A}  \bar{f}_1\|_{L^2}^2
+\frac{1}{2}\|\underline{h}_i^{+A}\partial_iT_{\beta^A} T_{q^A}  \bar{f}_1\|_{L^2}^2
+\frac{1}{2}\|\underline{h}_i^{-A}\partial_iT_{\beta^A} T_{q^A}  \bar{f}_1\|_{L^2}^2,
\end{align*}
where $\beta^A:=(\gamma^{(\frac{3}{2})A})^{\frac{2s-5}{3}}\in \Sigma^{s-\frac{5}{2}}$, by following the proof of Proposition \ref{pro-f}, one can see that
\begin{equation*}
\frac{d}{dt}\Big(\bar{F}_s(\partial_t \bar{f_1}^D,\bar{f_1}^D)+C(\sigma)\|\bar{f_1}^D\|_{L^2}^2+\|\partial_t\bar{f_1}^D\|_{L^2}^2\Big)\le C(E^D+\bar{E_1}^D),
\end{equation*}
where
\begin{equation*}
\bar{E_1}^D=\sup_{t\in[0,T]}\Big(\sigma\|\bar{f_1}^D(t)\|^2_{H^{s-1}}+\|\bar{f_1}^D(t)\|^2_{H^{s-\frac{3}{2}}}+\|\partial_t\bar{f_1}^D(t)\|^2_{H^{s-\frac{5}{2}}}\Big).
\end{equation*}
As $(f^A,\vom^{\pm A}_{*},\vj^{\pm A}_{*},\mathfrak{a}^{\pm A}_{i},\mathfrak{b}^{\pm A}_{i})\in \mathcal{X}(T,M_1,M_2)$, it holds that
\begin{equation*}
\Big(\sigma\|\bar{f_1}^D(t)\|^2_{H^{s-1}}+\|\bar{f_1}^D(t)\|^2_{H^{s-\frac{3}{2}}}+\|\partial_t\bar{f_1}^D(t)\|^2_{H^{s-\frac{5}{2}}}\Big)\le C\Big(\bar{F}_s(\partial_t \bar{f_1}^D,\bar{f_1}^D)+C(\sigma)\|\bar{f_1}^D\|_{L^2}^2+\|\partial_t\bar{f_1}^D\|_{L^2}^2\Big).
\end{equation*}
Applying Gronwall's inequality, we have
\begin{equation*}
\sup_{t\in[0,T]}\Big(\sigma\|\bar{f_1}^D(t)\|_{H^{s-1}}+\|\bar{f_1}^D(t)\|_{H^{s-\frac{3}{2}}}+\|\partial_t\bar{f_1}^D(t)\|_{H^{s-\frac{5}{2}}}\Big)\le C(e^{CT}-1)E^D,
\end{equation*}
which implies
\begin{equation*}
\sup_{t\in[0,T]}\Big(\sigma\|\bar{f}^D(t)\|_{H^{s-1}}+\|\bar{f}^D(t)\|_{H^{s-\frac{3}{2}}}+\|\partial_t\bar{f}^D(t)\|_{H^{s-\frac{5}{2}}}\Big)\le C(e^{CT}-1)E^D.
\end{equation*}
Similarly,
\begin{equation*}
\sup_{t\in[0,T]}\Big(\|\bar{\vom}_*^D\|_{H^{s-3}(\Omega_{*}^{\pm})}+\|\bar{\vj}_*^D\|_{H^{s-3}(\Omega_{*}^{\pm})}\Big)\le C(e^{CT}-1)E^D.
\end{equation*}
From the equation
\begin{equation*}
\bar{\mathfrak{a}_i}^{\pm\mathcal C}(t)=\bar{\mathfrak{a}_i}^{\pm\mathcal C}(0)-\int_{0}^t\int_{\Gamma^\pm}\sum^3_{j=1}(u_j^{\pm\mathcal C}\partial_j u_i^{\pm\mathcal C}-h_j^{\pm\mathcal C}\partial_j h_i^{\pm\mathcal C})dx'd\tau,
\end{equation*}
we have
\begin{equation*}
|\bar{\mathfrak{a}_i}^{\pm D}(t)|\le |\mathfrak{a}_{iI}^{\pm D}|+TCE^D.
\end{equation*}
Similarly,
\begin{equation*}
|\bar{\mathfrak{b}_i}^{\pm D}(t)|\le |\mathfrak{b}_{iI}^{\pm D}|+TCE^D.
\end{equation*}
As a conclusion, we arrive at
\begin{equation*}
\bar{E}^D\le C(e^{CT}-1+T)E^D.
\end{equation*}
One can achieve the result by taking $T$ small enough.
\end{proof}
\subsection{The limit system}
Proposition \ref{prop-constract} shows that there exists a unique fixed point\\
 $(f,\vom^\pm_*,\vj^\pm_*,\mathfrak{a}_i^\pm,\mathfrak{b}_i^\pm)$ of the map $\mathcal{F}$ in $\mathcal{X}(T,M_1,M_2)$. Now, we will finish the proof of Theorem \ref{thm:1}, and show that one can recover $(\vu^\pm,\vh^\pm,p^\pm)$ from $(f,\vom^\pm_*,\vj^\pm_*,\mathfrak{a}_i^\pm,\mathfrak{b}_i^\pm)$ which is the unique solution to the original system \eqref{equa2}-\eqref{equa:p}. We call $(f,\vom^\pm_*,\vj^\pm_*,\mathfrak{a}_i^\pm,\mathfrak{b}_i^\pm)$ is the solution system of \eqref{equa2}-\eqref{bc-uh}.

From the construction of $\mathcal{F}$, the fixed point $$(f,\vom^\pm,\vj^\pm,\mathfrak{a}_i^\pm,\mathfrak{b}_i^\pm)=(f,\vom^\pm_*\circ\Phi_f^{-1},\vj^\pm_*\circ\Phi_f^{-1},\mathfrak{a}_i^\pm,\mathfrak{b}_i^\pm)$$ satisfies
\begin{align*}
\partial_t f=&\mathcal{P}\theta,	\\
\partial_t \theta=&\frac{\sigma}{2}(\mathcal{N}^+_f H(f)+\mathcal{N}^-_f H(f))
-\frac{1}{2}(\mathcal{N}^+_f-\mathcal{N}^-_f)
\widetilde{\mathcal{N}_f}^{-1}(\mathcal{N}^+_f H(f)-\mathcal{N}^-_f H(f))\\
&-((\underline{u}_1^+ +\underline{u}_1^-)\partial_1 \theta+(\underline{u}_2^+ +\underline{u}_2^-)\partial_2 \theta)-\frac{1}{2}\sum_{i,j=1,2}( \underline{u}_i^+\underline{u}_j^+ -\underline{h}_i^+\underline{h}_j^+ + \underline{u}_i^-\underline{u}_j^- -\underline{h}_i^-\underline{h}_j^- )\partial_i\partial_j  f\\
&+\frac{1}{2}(\mathcal{N}_f^+ -\mathcal{N}_f^-)\widetilde{\mathcal{N}_f}^{-1}\mathcal{P}(\sum_{i,j=1,2}(\underline{u}_i^+\underline{u}_j^+-\underline{h}_i^+\underline{h}_j^+ -\underline{u}_i^-\underline{u}_j^-+\underline{h}_i^-\underline{h}_j^-)\partial_i\partial_j  f)\\
&+(\mathcal{N}_f^+ -\mathcal{N}_f^-)\widetilde{\mathcal{N}_f}^{-1}\mathcal{P}
((\underline{u}_1^+ - \underline{u}_1^-)\partial_1 \theta+(\underline{u}_2^+ - \underline{u}_2^-)\partial_2 \theta)\\
&-\frac{1}{2}\mathbf{N}_f\cdot\underline{\nabla( p_{\textbf{u}^{+},\textbf{u}^{+}}-p_{\textbf{h}^{+},\textbf{h}^{+}})}-\frac{1}{2}\mathbf{N}_f\cdot\underline{\nabla( p_{\textbf{u}^{-},\textbf{u}^{-}}-p_{\textbf{h}^{-},\textbf{h}^{-}})}\\
&+\frac{1}{2}(\mathcal{N}_f^+ -\mathcal{N}_f^-)\widetilde{\mathcal{N}_f}^{-1}\mathcal{P}\mathbf{N}_f\cdot\Big(
\underline{\nabla(p_{\textbf{u}^{+},\textbf{u}^{+}}-p_{\textbf{h}^{+},\textbf{h}^{+}})}-\underline{\nabla(
p_{\textbf{u}^{-},\textbf{u}^{-}}-p_{\textbf{h}^{-},\textbf{h}^{-}})}\Big).
\end{align*}
Here $p_{\textbf{v}^\pm,\textbf{v}^\pm}$ is defined in \eqref{eq-p}, and $(\textbf{u}^{\pm},\textbf{h}^{\pm})$ is the solution to
\begin{equation*}
\left\{\begin{aligned}
&\mathrm{curl}\textbf{u}^{\pm}=P_f^{\mathrm{div}}\vom^\pm, \quad \mathrm{div}\textbf{u}^{\pm}=0 \quad &\mathrm{in} \quad \Omega_f^{\pm};\\
&\textbf{u}^{\pm}\cdot\textbf{N}_f=\partial_t f \quad &\mathrm{on} \quad \Gamma_f;\\
&\textbf{u}^{\pm}\cdot \mathbf{e_3}=0, \quad \int_{\Gamma^\pm}u_i dx'=\mathfrak{a}_i^\pm(i=1,2) \quad &\mathrm{on} \quad \Gamma^\pm;\\
&\partial_t\mathfrak{a}_i^\pm=-\int_{\Gamma^\pm}\sum^3_{j=1}(u_j^\pm\partial_j u_i^\pm-h_j^\pm\partial_j h_i^\pm)dx',
\end{aligned}
\right.
\end{equation*}
and
\begin{equation*}
\left\{\begin{aligned}
&\mathrm{curl}\textbf{h}^{\pm}=P_f^{\mathrm{div}}\vj^\pm, \quad \mathrm{div}\textbf{h}^{\pm}=0 \quad &\mathrm{in} \quad \Omega_f^{\pm};\\
&\textbf{h}^{\pm}\cdot\textbf{N}_f=0 \quad &\mathrm{on} \quad \Gamma_f;\\
&\textbf{h}^{\pm}\cdot \mathbf{e_3}=0, \quad \int_{\Gamma^\pm}h_i dx'=\mathfrak{b}_i^\pm(i=1,2) \quad &\mathrm{on} \quad \Gamma^\pm;\\
&\partial_t\mathfrak{b}_i^{\pm}=-\int_{\Gamma^\pm}\sum^3_{j=1}(u_j^\pm\partial_j h_i^\pm-h_j^\pm\partial_j u_i^\pm)dx',
\end{aligned}
\right.
\end{equation*}
and $(\vom^\pm,\vj^\pm)$ satisfies
\begin{equation*}
\partial_t \vom^\pm+\textbf{u}^{\pm}\cdot\nabla\vom^\pm-\textbf{h}^{\pm}\cdot\nabla\vj^\pm
=\vom^\pm\cdot\nabla\textbf{u}^{\pm}-\vj^\pm\cdot\nabla\textbf{h}^{\pm},
\end{equation*}
\begin{equation*}
\partial_t \vj^\pm+\textbf{u}^{\pm}\cdot\nabla\vj^\pm-\textbf{h}^{\pm}\cdot\nabla\vom^\pm
=\vj^\pm\cdot\nabla\textbf{u}^{\pm}-\vom^\pm\cdot\nabla\textbf{h}^{\pm}-2\sum^3_{i=1}\nabla u_i^\pm\times\nabla h_i^\pm.
\end{equation*}
Next, we will show that the above system is equivalent to the origin system \eqref{equa2}-\eqref{equa:p}. We introduce the pressure $p^\pm$ by
\begin{equation*}
p^{\pm}=\mathcal{H}^\pm\underline{p}^\pm+ p_{\textbf{u}^{\pm},\textbf{u}^{\pm}}-p_{\textbf{h}^{\pm},\textbf{h}^{\pm}},
\end{equation*}
where
\begin{equation*}
\underline{p}^\pm=\widetilde{\mathcal{N}_f}^{-1}(g^+ - g^-\pm\mathcal{N}_f^\mp \sigma H),
\end{equation*}
with
\begin{equation*}
g^{\pm}=2(\underline{u}_1^{\pm}\partial_1 \theta+\underline{u}_2^{\pm}\partial_2 \theta)+
\textbf{N}_f\cdot\underline{\nabla(p_{\textbf{u}^{\pm},\textbf{u}^{\pm}}-p_{\textbf{h}^{\pm},\textbf{h}^{\pm}})}+
\sum_{i,j=1,2}(\underline{u}_i^{\pm}\underline{u}_j^{\pm}-\underline{h}_i^\pm\underline{h}_j^\pm )\partial_i\partial_j  f.
\end{equation*}
Then, for
\begin{equation*}
\textbf{k}^{\pm}\eqdefa\partial_t \textbf{u}^{\pm}+\textbf{u}^{\pm}\cdot\nabla\textbf{u}^{\pm}-\textbf{h}\cdot\nabla\textbf{h}^{\pm}
+\nabla p^{\pm},
\end{equation*}
or
\begin{equation*}
\textbf{k}^{\pm}\eqdefa\partial_t \textbf{h}^{\pm}+\textbf{u}^{\pm}\cdot\nabla\textbf{h}^{\pm}-\textbf{h}^{\pm}\cdot\nabla\textbf{u}^{\pm},
\end{equation*}
one can check following the proof in Section 9 of \cite{SWZ1} that
\begin{equation*}
\left\{\begin{aligned}
&\mathrm{curl}\textbf{k}^{\pm}=0, \quad \mathrm{div}\textbf{k}^{\pm}=0 \quad &\mathrm{in} \quad \Omega_f^{\pm};\\
&\textbf{k}^{\pm}\cdot\textbf{N}_f=0 \quad &\mathrm{on} \quad \Gamma_f;\\
&\textbf{k}^{\pm}\cdot \mathbf{e_3}=0, \quad \int_{\Gamma^\pm}w_i dx'=0(i=1,2) \quad &\mathrm{on} \quad \Gamma^\pm,\\
\end{aligned}
\right.
\end{equation*}
which means that $\textbf{k}^{\pm}\equiv0$, and $(f,\vu^\pm,\vh^\pm,p^\pm)$ is the unique solution to the original system \eqref{equa2}-\eqref{bc-uh}.
\section{Zero surface tension limit}
In the previous section, we have showed that if $(f_0,\vu_0,\vh_0)$ satisfies the assumption of Theorem \ref{thm:1}, there is a unique solution $(f^\sigma,\vu^\sigma,\vh^\sigma)$ of system \eqref{equa2}-\eqref{con-ini} in time $[0.T^\sigma]$. To study the zero surface tension limit, we need to show that if in addition $(f_0,\vu_0,\vh_0)$ satisfies
\begin{align*}
	\Lambda(\vh_0^{\pm},[\vu_0])\ge 2c_0,
\end{align*}
the solution $(f^\sigma,\vu^\sigma,\vh^\sigma)$ can be extended to a $\sigma$ independent time $\bar T$.

Defining energy functionals
\begin{align*}
	G^\sigma_1&=\frac{1}{4}\|T_{\gamma^\sigma} T_{\beta^\sigma} T_{q^\sigma} f^\sigma\|^2_{L^2},\\
	G^\sigma_2&=\|(\partial_t+w^\sigma_i\partial_i )T_{\beta^\sigma} T_{q^\sigma}  f^\sigma\|^2_{L^2}-\frac{1}{2}\| v^\sigma_i \partial_i T_{\beta^\sigma} T_{q^\sigma}   f^\sigma\|_{L^2}^2\\
	&\qquad\qquad\qquad+\frac{1}{4}\| \underline{h}^{\sigma+}_i \partial_i T_{\beta^\sigma} T_{q^\sigma}   f^\sigma\|_{L^2}^2+\frac{1}{4}\| \underline{h}^{\sigma-}_i \partial_i T_{\beta^\sigma} T_{q^\sigma}   f^\sigma\|_{L^2}^2,
\end{align*}
and
\begin{align*}
	\mathcal{G}^\sigma_1(t)=&\frac{1}{2}\|f^\sigma\|^2_{H^{s+1}},\quad\mathcal{G}^\sigma_2(t)=\|f^\sigma\|^2_{H^{s+\frac{1}{2}}}+\|\partial_t f^\sigma\|^2_{H^{s-\frac{1}{2}}},\\
	\mathcal{G}^\sigma(t)=&\| \partial_t f^\sigma \|_{H^{s-\frac{1}{2}}}^2+\| f^\sigma \|_{H^{s+\frac{1}{2}}}^2+\sigma\| f^\sigma \|_{H^{s+1}}^2+\|\textbf{u}^{\sigma\pm}\|_{H^{s}(\Omega_f^{\pm})}+\|\textbf{h}^{\sigma\pm}\|_{H^{s}(\Omega_f^{\pm})},
\end{align*}
we give the following uniform a priori estimate.
\begin{proposition}\label{pro-f1}
Assume $(f^\sigma,\vu^\sigma,\vh^\sigma)$ is the solution of system \eqref{equa2}-\eqref{con-ini} from initial data $(f_0,\vu_0,\vh_0)$ in $[0,T]$ satisfying
\begin{align*}
	\inf_{t\in[0,T]}\Lambda(\vh^{\sigma\pm},[\vu^\sigma])(t)\ge c_0,
\end{align*}
it holds that
\begin{equation}\label{unifromEs}
\begin{aligned}
\sup_{t\in [0,T]}\mathcal{G}^\sigma(t)&\le
C\mathcal{G}^\sigma(0)e^{CT},
\end{aligned}
\end{equation}
where $C$ is a constant independent on $\sigma$.
\end{proposition}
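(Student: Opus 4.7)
The plan is to run the energy estimate of Proposition \ref{pro-f} directly on the nonlinear solution $(f^\sigma,\vu^\sigma,\vh^\sigma)$, using the Syrovatskij-type condition $\Lambda(\vh^{\sigma\pm},[\vu^\sigma])\ge c_0$ to replace the $\sigma$-dependent coercivity used in Section 5. The key observation, already flagged in the remark following Proposition \ref{pro-f}, is that the $\sigma$-dependence of the linear estimate entered \emph{only} through the Gagliardo--Nirenberg interpolation $\|f\|_{H^{s+\frac12}}^2\le\sigma\|f\|_{H^{s+1}}^2+C(\sigma)\|f\|_{L^2}^2$, which was forced on us because the combination
\begin{equation*}
-\tfrac12\|v_i\pa_iT_\beta T_qf\|_{L^2}^2+\tfrac14\|\underline{h}^+_i\pa_iT_\beta T_qf\|_{L^2}^2+\tfrac14\|\underline{h}^-_i\pa_iT_\beta T_qf\|_{L^2}^2
\end{equation*}
was not a priori positive. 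The Syrovatskij condition is exactly the sign condition that makes this combination coercive.

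First I would reproduce the paradifferential energy computation of Section 5 for the fixed-point solution itself, arriving at
\begin{equation*}
\frac{d}{dt}\bigl(\sigma G_1^\sigma+G_2^\sigma\bigr)\le C(\mathcal G^\sigma)\,\mathcal G^\sigma,
\end{equation*}
with $C(\cdot)$ a nondecreasing function \emph{independent of $\sigma$}. Every coefficient and remainder involves $\underline\vu^{\sigma\pm},\underline\vh^{\sigma\pm},f^\sigma,\partial_tf^\sigma$ only through norms bounded by $\mathcal G^\sigma$, and Lemma \ref{lem-g} controls $\|\mathfrak g^\sigma\|_{H^{s-\frac12}}^2+\sigma^2\|R^\sigma\|_{H^{s-\frac12}}^2$. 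The crucial new step is coercivity: from $\tfrac12(\underline{h}^{\sigma+}\cdot\varphi)^2+\tfrac12(\underline{h}^{\sigma-}\cdot\varphi)^2-(v^\sigma\cdot\varphi)^2\ge c_0|\varphi|^2$ applied with $\varphi=\nabla(T_{\beta^\sigma}T_{q^\sigma}f^\sigma)(x')$ and integrated over $\bbT^2$, we obtain
\begin{equation*}
-\tfrac12\|v^\sigma_i\pa_iT_{\beta^\sigma}T_{q^\sigma}f^\sigma\|_{L^2}^2+\tfrac14\|\underline{h}^{\sigma+}_i\pa_iT_{\beta^\sigma}T_{q^\sigma}f^\sigma\|_{L^2}^2+\tfrac14\|\underline{h}^{\sigma-}_i\pa_iT_{\beta^\sigma}T_{q^\sigma}f^\sigma\|_{L^2}^2\ge\tfrac{c_0}{2}\|\nabla T_{\beta^\sigma}T_{q^\sigma}f^\sigma\|_{L^2}^2.
\end{equation*}
Combined with the ellipticity of $T_{\beta^\sigma}T_{q^\sigma}\in\Sigma^{s-\frac12}$ and of $T_{\gamma^\sigma}$ (see \eqref{elliptic}), and a cheap control of $\partial_tf^\sigma$ via $(\partial_t+w_i^\sigma\partial_i)T_{\beta^\sigma}T_{q^\sigma}f^\sigma$, this yields $\sigma\mathcal G_1^\sigma+\mathcal G_2^\sigma\le C\bigl(\sigma G_1^\sigma+G_2^\sigma\bigr)+C\bigl(\|f^\sigma\|_{L^2}^2+\|\partial_tf^\sigma\|_{L^2}^2\bigr)$ with $C$ independent of $\sigma$, which is precisely the $\sigma$-uniform replacement of the coercivity inequality \eqref{E2}.

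To close the full energy $\mathcal G^\sigma$, the Sobolev norms of $\vu^{\sigma\pm},\vh^{\sigma\pm}$ are controlled through the vorticity/current transport of Proposition \ref{pro-curl}, the div-curl reconstruction, and the ODE evolution of $\mathfrak a^{\sigma\pm},\mathfrak b^{\sigma\pm}$ in \eqref{eq-beta}--\eqref{eq-gamma}; none of these touches the surface tension and all are $\sigma$-uniform. The low-order $L^2$ residuals close by an elementary bound $\tfrac{d}{dt}(\|f^\sigma\|_{L^2}^2+\|\partial_tf^\sigma\|_{L^2}^2)\le C\,\mathcal G^\sigma$ that is likewise $\sigma$-independent. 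Gronwall on the composite $\sigma$-independent energy delivers \eqref{unifromEs}.

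The main obstacle will be the bookkeeping: one must revisit every paradifferential estimate from Section 5---the paralinearization \eqref{eq-paraline}, the symmetrization of Proposition \ref{symbol.pro4}, and the commutator Lemmas \ref{symbol.lem1}--\ref{symbol.lem3}, including the handling of the remainder $R^\sigma$---and verify that the implicit constants are functions of $\mathcal G^\sigma$ alone, with no hidden $\sigma$-dependence. Since in the linearized setting the $\sigma$-dependence was genuinely confined to the single coercivity step we now bypass, this verification should succeed, but it is the most delicate part of the argument and the natural place where something subtle could go wrong if the Syrovatskij quadratic form only provided positivity modulo a too-singular commutator.
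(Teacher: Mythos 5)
Your proposal is correct and matches the paper's proof: rerun the Section 5 paradifferential energy computation on the nonlinear solution, close via the transport and div-curl reconstruction bounds for $(\vu^{\sigma\pm},\vh^{\sigma\pm})$ together with the ODEs for $\mathfrak a^{\pm}_i,\mathfrak b^{\pm}_i$, and apply Gronwall, with the Syrovatskij condition supplying the $\sigma$-uniform coercivity of $G_2^\sigma$ that replaces the $C(\sigma)\|\bar f\|_{L^2}^2$ crutch used in Proposition \ref{pro-f}. Where the paper compresses that coercivity step to a citation to \cite{SWZ1}, you spell out the pointwise quadratic-form inequality $-\tfrac12(v\cdot\varphi)^2+\tfrac14(\underline h^+\cdot\varphi)^2+\tfrac14(\underline h^-\cdot\varphi)^2=\tfrac12\bigl[\tfrac12(\underline h^+\cdot\varphi)^2+\tfrac12(\underline h^-\cdot\varphi)^2-(v\cdot\varphi)^2\bigr]\ge\tfrac{c_0}{2}|\varphi|^2$ with $\varphi=\nabla T_{\beta^\sigma}T_{q^\sigma}f^\sigma$, which is exactly the content behind the paper's assertion that $G_2^\sigma$ is positive.
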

\begin{proof}
Following the procedures of the proof of Proposition \ref{pro-f} and Proposition \ref{pro-curl}, and using Lemma \ref{lem-g}, we can easily deduce that
\begin{align}\label{ineq-energy}
	\frac{d}{dt}\big(\sigma G^\sigma_1+&G^\sigma_2+\|f^\sigma\|^2_{L^2}+\|\partial_tf^\sigma\|^2_{L^2}\\
	&+||\vom^\pm||^2_{H^{s-1}(\Omega_f^\pm)}+||\vxi^\pm||^2_{H^{s-1}(\Omega_f^\pm)}+|\mathfrak{a}_i^\pm|+|\mathfrak{b}_i^\pm|\big)(t)\le P(\mathcal{G}^\sigma)(t).\nonumber
\end{align}
Here $P(\cdot)$ is a polynomial whose coefficients are independent on $\sigma$.

It is clear that
\begin{align*}
	\sigma G^\sigma_1+G^\sigma_2+||\vom^\pm||^2_{H^{s-1}(\Omega_f^\pm)}+||\vxi^\pm||^2_{H^{s-1}(\Omega_f^\pm)}+|\mathfrak{a}_i^\pm|+|\mathfrak{b}_i^\pm|\le C(\mathcal{G}^\sigma)\mathcal{G}^\sigma.
\end{align*}
From the assumption that the Syrovatskij type stability condition holds, $G^\sigma_2(t)$ is positive \cite{SWZ1}, and it holds that
\begin{align}\label{E3}
\mathcal{G}^\sigma_2(t)\le C(c_0,\|(\underline\vu^{\pm},\underline\vh^{\pm})\|_{W^{1,\infty}},\|f\|_{W^{2,\infty}}) (G_2(t)+\|f^\sigma\|^2_{L^2}+\|\partial_tf^\sigma\|^2_{L^2}).
\end{align}
Furthermore, by Proposition \ref{prop:div-curl} we also have
\begin{align*}
	\|\textbf{u}^{\sigma\pm}\|_{H^{s}(\Omega_f^{\pm})}+\|\textbf{h}^{\sigma\pm}\|_{H^{s}(\Omega_f^{\pm})}\le C(\mathcal G^\sigma_2)\big(G^\sigma_2+||\vom^\pm||^2_{H^{s-1}(\Omega_f^\pm)}+||\vxi^\pm||^2_{H^{s-1}(\Omega_f^\pm)}+|\mathfrak{a}_i^\pm|+|\mathfrak{b}_i^\pm|\big).
\end{align*}
Thus $\mathcal G^\sigma$ is equivalent to
\begin{align*}
	\sigma G^\sigma_1+&G^\sigma_2+\|f^\sigma\|^2_{L^2}+\|\partial_tf^\sigma\|^2_{L^2}+||\vom^\pm||^2_{H^{s-1}(\Omega_f^\pm)}+||\vxi^\pm||^2_{H^{s-1}(\Omega_f^\pm)}+|\mathfrak{a}_i^\pm|+|\mathfrak{b}_i^\pm|.
\end{align*}

Combing the above results, one can get the desired estimates by \eqref{ineq-energy} and Gronwall's inequality.
\end{proof}
\begin{remark}\label{rem-ext}
	From Proposition \ref{pro-f1}, we can get uniform estimates for $\|\pa_t\vu^{\sigma\pm}\|_{H^{s-1}(\Omega_f^{\pm})}$, and $\|\pa_t\vh^{\sigma\pm}\|_{H^{s-1}(\Omega_f^{\pm})}$, which determine the value of $\Lambda(\vh^{\sigma\pm},[\vu^\sigma])$. Then, by continuous argument, it is clear that the solution $(f^\sigma,\vu^\sigma,\vh^\sigma)$ gotten in Theorem \ref{thm:1} can be extended to a lifespan $\bar T$ independent of $\sigma$ .
\end{remark}

Thus, Theorem \ref{theo2} can be proofed as follow:
\begin{proof}
The initial data $(f_0,\textbf{u}^{\pm}_0, \textbf{h}^{\pm}_0)$ satisfies all the assumption in Theorem \ref{thm:1}, then there exist a unique solution $(f^\sigma,\textbf{u}^\sigma,\textbf{h}^\sigma)$ of the system \eqref{equa2}-\eqref{bc-uh} in $T^\sigma$. Moreover, from the assumption that
\begin{align*}
	\Lambda(\textbf{h}_0^{\pm},[\textbf{u}_0])\ge 2c_0,
\end{align*}
one can see from Proposition \ref{pro-f1} and Remark \ref{rem-ext} that the solutions can be extended to the one with a lifespan $ T$ independent of $\sigma$. We also denote by $(f^\sigma,\textbf{u}^\sigma,\textbf{h}^\sigma)$ the extended solutions in $[0,T]$ that satisfy
Therefore,
\begin{itemize}
\item[1.]$f^\sigma\in L^\infty([0,T),H^{s+\frac{1}{2}}(\mathbb{T}^2))$,
\item[2.]$\textbf{u}^{\sigma\pm},\textbf{h}^{\sigma\pm}\in L^\infty(0,T;H^{s}(\Omega^\pm_{f^\sigma}))$,
\item[3.]$-(1-c_0)\le f^\sigma\le (1-c_0)$,
\item[4.]$\Lambda(\textbf{h}^{\pm},[\textbf{u}])\ge c_0$.
\end{itemize}
Next, we introduce
\begin{align*}
	\vu^{\sigma\pm}_*=\vu^{\sigma\pm}\circ\Phi^\pm_{f^\sigma}, \quad \vh^{\sigma\pm}_*=\vh^{\sigma\pm}\circ\Phi^\pm_{f^\sigma}.
\end{align*}
It is easily seen that
\begin{align*}
	\sup_{t\in[0,T]}\big(||f^\sigma||^2_{H^{s+\frac{1}{2}}}+||\vu^{\sigma\pm}_*||^2_{H^{s}(\Omega_{f_0})}+||\vh^{\sigma\pm}_*||^2_{H^{s}(\Omega_{f_0})})\big(t)\le C.
\end{align*}
Then there exists a subsequence of $\{(f^\sigma,\vu^{\sigma\pm}_*,\vu^{\sigma\pm}_*)\}$ which converges weakly to some $(f,\vu^{\pm}_*,\vh^{\pm}_*)$ satisfying
\begin{align*}
	\sup_{t\in[0,T]}\big(||f||^2_{H^{s+\frac{1}{2}}}+||\vu^{\pm}_*||^2_{H^{s}(\Omega_{f_0})}+||\vh^{\pm}_*||^2_{H^{s}(\Omega_{f_0})})\big(t)\le C.
\end{align*}
Let $\vu^\pm=\vu^{\pm}_*\circ\Phi^{\pm-1}_{f}$ and $\vh^\pm=\vh^{\pm}_*\circ\Phi^{\pm-1}_{f}$. By a standard compactness argument, we can prove that $(f,\vu,\vh)$ is a solution of the system \eqref{equa2}-\eqref{bc-uh} with $\sigma=0$.
\end{proof}
%\section{Remarks on the two dimensional case}
\section{Further discussion}
In this paper, we study the two phase flow problem with surface tension in the ideal incompressible magnetohydrodynamics. We give a proof of local well-posedness and zero surface tension limit for the case  $\rho^+=\rho^-=1$. The method developed in this paper still works for some general cases.
\subsection{Case $\rho^+\neq\rho^-$}
Recall the evolution equation of $f$ \eqref{system1}. For the general problem that $\rho^+,\rho^->0$, the three order term is
\begin{align*}
	\frac{\sigma}{(\rho^++\rho^-)^2}(\rho^+\mathcal{N}^+_f+\rho^-\mathcal{N}^-_f)H(f)
\end{align*}
instead of  $\frac{\sigma}{4}(\mathcal{N}^+_f H(f)+\mathcal{N}^-_f H(f))$. Define
\begin{align*}
	\lambda^\rho=\rho^+\lambda^++\rho^-\lambda^-=\lambda^{\rho(1)}+\lambda^{\rho(0)},
\end{align*}
where
\begin{align*}
	\lambda^{\rho(1)}=\rho^+\lambda^{+(1)}+\rho^-\lambda^{-(1)},\quad\lambda^{\rho(0)}=\rho^+\lambda^{+(0)}+\rho^-\lambda^{-(0)}.
\end{align*}
Here $\lambda^\pm=\lambda^{\pm(1)}+\lambda^{\pm(0)}$ is defined in Section 4. We have
\begin{align*}
	\frac{\sigma}{(\rho^++\rho^-)^2}(\rho^+\mathcal{N}^+_f+\rho^-\mathcal{N}^-_f)H(f)=-\frac{\sigma}{(\rho^++\rho^-)^2}T_{\lambda^\sigma}T_l f+\frac{\sigma}{(\rho^++\rho^-)^2}R^\rho,
\end{align*}
with remainder term $R^\rho$ satisfying
\begin{align*}
	\|R\|_{H^{s-\frac{1}{2}}}\le C(\|f\|_{H^{s+\frac{1}{2}}})\|f\|_{H^{s+1}}.
\end{align*}
Similar to Proposition \ref{symbol.pro4}, let
\begin{align*}
\gamma^\rho=\underbrace{\sqrt{l^{(2)}\lambda^{\rho(1)}}}_{\gamma^{\rho(\frac{3}{2})}}+\underbrace{\frac{1}{2}\sqrt{\frac{l^{(2)}}{\lambda^{\rho(1)}}}\text{Re}(\lambda^{\rho(0)})+\frac{1}{2i}(\partial_\xi \cdot\partial_x)\sqrt{l^{(2)}\lambda^{\rho(1)}}}_{\gamma^{\rho(\frac{1}{2})}},
\end{align*}
we have $T_qT_{\lambda^\rho}T_l\sim T_{\gamma^\rho} T_{\gamma^\rho} T_q$ and $T_{\gamma^\rho}\sim (T_{\gamma^\rho})^*$.

Then one can use the method introduced in this paper to get similar results in Theorem \ref{thm:1} and Theorem \ref{theo2}. The zero surface tension limit solution is the solution constructed in \cite{LL}.
\subsection{Case $\rho^+=0$} For one fluid problem that there is no fluid and no magnetic in the upper domain, the evolution equation of $f$ is
\begin{align*}
\partial_t^2 f=-2(\underline{u}_1^-\partial_1 \theta+\underline{u}_2^-\partial_2 \theta)+\frac{\sigma}{\rho^-}\mathcal{N}^-_f H(f)-\frac{1}{\rho^-}\textbf{N}_f\cdot
\underline{\nabla(\rho^- p_{\textbf{u}^{-},\textbf{u}^{-}}-p_{\textbf{h}^{-},\textbf{h}^{-}})}\\
-\sum_{i,j=1,2}\underline{u}_i^-\underline{u}_j^-\partial_i\partial_j f+\frac{1}{\rho^{-}}\sum_{i,j=1,2}\underline{h}_i^-\underline{h}_j^-\partial_i\partial_j f.
\end{align*}
For the three order term $\frac{\sigma}{\rho^-}\mathcal{N}^-_f H(f)$, it holds that
\begin{align*}
	\frac{\sigma}{\rho^-}\mathcal{N}^-_f H(f)=-\frac{\sigma}{\rho^-}T_{\lambda^-}T_l f+\frac{\sigma}{\rho^-}R^-,
\end{align*}
and $T_qT_{\lambda^-}T_l\sim T_{\gamma^-} T_{\gamma^-} T_q$ and $T_{\gamma^-}\sim (T_{\gamma^-})^*$,
where
\begin{align*}
	\gamma^-=\underbrace{\sqrt{l^{(2)}\lambda^{-(1)}}}_{\gamma^{-(\frac{3}{2})}}+\underbrace{\frac{1}{2}\sqrt{\frac{l^{(2)}}{\lambda^{-(1)}}}\text{Re}(\lambda^{-(0)})+\frac{1}{2i}(\partial_\xi \cdot\partial_x)\sqrt{l^{(2)}\lambda^{-(1)}}}_{\gamma^{-(\frac{1}{2})}}.
\end{align*}
Then one can get local well-posedness of the one fluid problem by using the method developed in this paper. 
\begin{appendix}
  \section{}
  \subsection{Paradifferential Operator}
In this subsection we will introduce some notations and results about Bony's paradifferential calculus. Here we follow the presentation by M$\acute{e}$tivier in \cite{MG}, for the general theory we refer to \cite{BJ}, \cite{Hormander}, \cite{MG}, \cite{Meyer} and \cite{Tay}.\\
For $\rho\in \mathbb{N}$, we denote $W^{\rho,\infty}(\mathbb{T}^d)$ the Sobolev spaces of $L^\infty$ functions whose derivatives of order $\rho$ are also in $L^\infty$. For
$\rho\in (0,\infty)/\mathbb{N}$, we denote $W^{\rho,\infty}(\mathbb{T}^d)$ the Sobolev spaces of $L^\infty$ functions whose derivatives of order $[\rho]$ are uniformly continuous with exponent $\rho-[\rho]$.
\begin{definition}\label{appDef1}
Given $\rho\ge 0$ and $m\in \mathbb{R}$, denote by $\Gamma^m_\rho(\mathbb{T}^d)$ the space of locally bounded functions $a(x,\xi)$ on $\mathbb{T}^d\times\mathbb{R}^d/\{0\}$, which are $C^\infty$ with respect to $\xi$ for $\xi\neq 0$ and such that, for all $\alpha\in \mathbb{N}^d$ and all $\xi\neq 0$, the function $x\rightarrow\partial^\alpha_\xi a(x,\xi)$ belongs to $W^{\rho,\infty}$ and there exists a constant $C_\alpha$ such that
\begin{align*}
\|\partial^\alpha_\xi a(x,\xi)\|_{W^{\rho,\infty}}:=C_\alpha(1+|\xi|)^{m-|\alpha|}\quad \quad \forall |\xi|\ge\frac{1}{2}
\end{align*}
The seminorm of the symbol is defined by
\begin{align*}
M^m_{\rho}(a):=\sup_{|\alpha|\le\frac{3d}{2}+1+\rho}\sup_{|\xi|\ge\frac{1}{2}}
\|(1+|\xi|)^{|\alpha|-m}\partial^\alpha_\xi a(\cdot,\xi)\|_{W^{\rho,\infty}}
\end{align*}
\end{definition}
Given a symbol a, the paradifferential operator $T_a$ is defined by
\begin{align*}
\widehat{T_au}(\xi):=(2\pi)^{-d}\int\chi(\xi-\eta,\eta)\widehat{a}(\xi-\eta,\eta)\psi(\eta)
\widehat{u}(\eta)d\eta,
\end{align*}
where $\widehat{a}(x,\xi)$ is the Fourier transform of a with respect to the first variable, $\chi(\theta,\xi)\in C^\infty(\mathbb R^d\times \mathbb R^d)$ is an admissible cutoff function: there exists $\varepsilon_1,\varepsilon_2$ such that $0\le\varepsilon_1<\varepsilon_2$ $\psi$ and
\begin{align*}
\chi(\theta,\eta)=1 \quad if \quad |\theta|\le \varepsilon_1|\eta|,\quad \chi(\theta,\eta)=0 \quad if \quad |\theta|\ge \varepsilon_2|\eta|.
\end{align*}
and such that for any $(\theta,\xi)\in\mathbb R^d\times \mathbb R^d$,
\begin{align*}
	|\pa_\theta^\alpha\pa_\eta^\alpha\chi(\theta,\xi)|\le C_{\alpha,\beta}(1+|\eta|)^{-|\alpha|-|\beta|}.
\end{align*}
The cutoff function $\psi(\eta)\in C^\infty(\mathbb R^d)$ satisfies
\begin{align*}
\psi(\eta)=0 \quad for \quad |\eta|\le 1, \quad \psi(\eta)=1 \quad for\quad  |\eta|\ge 2,
\end{align*}
Here we will take the admissible cutoff function $\chi(\theta,\xi)$
\begin{align*}
	\chi(\theta,\xi)=\sum^\infty_{k=0}\zeta_{k-3}(\theta)\phi_k(\eta),
\end{align*}
where $\zeta(\theta)=1$ for $|\theta| \le 1.1, \zeta(\theta)=0$ for $|\theta| \ge 1.9$, and
$$
\left\{\begin{array}{ll}
\xi_{k}(\theta)=\zeta\left(2^{-k} \theta\right) & \text { for } k \in \mathbb{Z} \\
\varphi_{0}=\zeta, \varphi_{k}=\zeta_{k}-\zeta_{k-1} & \text { for } k \ge 1
\end{array}\right.
$$
We also introduce the Littlewood-Paley operators $\Delta_{k}, S_{k}$ defined by
\begin{align*}
	\Delta_{k} u&=\mathcal{F}^{-1}\left(\varphi_{k}(\xi) \hat{u}(\xi)\right) \text { for } k \ge 0, \quad \Delta_{k} u=0 \text { for } k<0, \\
S_{k} u&=\sum_{\ell \le k} \Delta_{\ell} u \text { for } k \in \mathbb{Z}.
\end{align*}
In the case when the function $a$ depends only on the first variable $x$ in $T_{a} u$, we take $\psi=1$. Then $T_{a} u$ is just the usual Bony's paraproduct defined by
\begin{align*}
	T_{a} u=\sum_{k} S_{k-3} a \Delta_{k} u.
\end{align*}
We have the following well-known Bony's decomposition (see \cite{BCD}):
\begin{align*}
	a u=T_{a} u+T_{u} a+\mathcal{R}_{\mathcal{B}}(u, a),
\end{align*}
where the remainder term $\mathcal{R}_{\mathcal{B}}(u, a)$ is defined by
\begin{align*}
\mathcal{R}_{\mathcal{B}}(u, a)=\sum_{|k-\ell| \le 2} \Delta_{k} a \Delta_{\ell} u.
\end{align*}

We list the main features of symbolic calculus for paradifferential operators, the details of proof can be find in \cite{MG}.
\begin{proposition}\label{para1}
Let $m\in \mathbb R$. If $a\in \Gamma^m_0(\mathbb{T}^d)$, then $T_a$ is of order m. Moreover, for all $\mu\in \mathbb{R}$ there exists a constant K such that
\begin{align}\label{symbol.est1}
\|T_a\|_{H^\mu\rightarrow H^{\mu-m}}\le K M^m_0(a).
\end{align}
\end{proposition}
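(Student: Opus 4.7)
The plan is to prove this classical boundedness via Littlewood–Paley decomposition and almost-orthogonality in frequency. First I would exploit the cutoff $\psi$ in the definition of $T_a$: because $\psi(\eta)=0$ for $|\eta|\le 1$, only nontrivial frequencies of $u$ contribute, so we may decompose $T_a u = \sum_{k\ge 0} T_a\Delta_k u$ up to a harmless finite number of low-frequency pieces. The admissible cutoff $\chi(\xi-\eta,\eta)$ is supported in $|\xi-\eta|\le \varepsilon_2|\eta|$, so if $\widehat{\Delta_k u}$ is supported in $\{|\eta|\sim 2^k\}$ then the spectrum of $T_a\Delta_k u$ is contained in an annulus $\{|\xi|\sim 2^k\}$. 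Hence by the standard Littlewood–Paley characterisation of Sobolev norms,
\begin{equation*}
\|T_a u\|_{H^{\mu-m}}^2 \le C\sum_{k\ge 0} 2^{2k(\mu-m)}\,\|T_a\Delta_k u\|_{L^2}^2.
\end{equation*}

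Next I would establish the per-block bound
\begin{equation*}
\|T_a\Delta_k u\|_{L^2}\le C\, M^m_0(a)\, 2^{km}\,\|\Delta_k u\|_{L^2}.
\end{equation*}
Changing variables $\zeta=\xi-\eta$ in the definition of $T_a$ recasts it as
\begin{equation*}
T_a\Delta_k u(x)=\frac{1}{(2\pi)^d}\int e^{ix\cdot\eta}\,\widetilde a(x,\eta)\,\psi(\eta)\varphi_k(\eta)\,\widehat u(\eta)\,d\eta, \qquad \widetilde a(x,\eta):=\int e^{ix\cdot\zeta}\chi(\zeta,\eta)\widehat a(\zeta,\eta)\,d\zeta,
\end{equation*}
so $\widetilde a(\cdot,\eta)$ is an $x$-regularisation of $a(\cdot,\eta)$ at scale $|\eta|^{-1}\sim 2^{-k}$ and satisfies $\|\widetilde a(\cdot,\eta)\|_{L^\infty}\le C\,M^m_0(a)(1+|\eta|)^m$. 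Writing this oscillatory integral as a convolution operator and performing enough integrations by parts in $\eta$, made affordable by the symbol bounds $\|\partial_\eta^\alpha a(\cdot,\eta)\|_{L^\infty}\le M^m_0(a)(1+|\eta|)^{m-|\alpha|}$ in the definition of $\Gamma^m_0$, produces a kernel $K_k(x,y)$ whose $L^1_y$-mass is bounded by $C\,M^m_0(a)\,2^{km}$ uniformly in $x$. Young's inequality (or Schur's test) then gives the claimed per-block estimate.

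Feeding this estimate into the first step and using the equivalence $\sum_k 2^{2k\mu}\|\Delta_k u\|_{L^2}^2 \sim \|u\|_{H^\mu}^2$ immediately yields
\begin{equation*}
\|T_a u\|_{H^{\mu-m}}\le K\,M^m_0(a)\,\|u\|_{H^\mu},
\end{equation*}
with $K$ depending only on $\mu$, $m$ and the cutoffs $\chi,\psi$. The main technical obstacle is the per-block kernel bound: one must execute enough integrations by parts in $\eta$ to defeat the volume factor $2^{kd}$ coming from the Fourier support of $\varphi_k$, and this is precisely why the seminorm $M^m_0(a)$ is defined with a supremum over multi-indices $\alpha$ of size up to roughly $\tfrac{3d}{2}+1$. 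Once that accounting is done, the remaining dyadic summation is routine.
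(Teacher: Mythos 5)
The paper does not actually supply a proof of Proposition~\ref{para1}: in the appendix it is stated as a textbook fact and attributed to M\'etivier \cite{MG}. Your sketch reproduces the standard argument from that source: dyadic decomposition, spectral localisation of $T_a\Delta_ku$ in an annulus $|\xi|\sim2^k$ coming from the cutoff $\chi$, a per-block $L^2$ bound obtained by $\eta$-integrations by parts on the regularised kernel, and the reverse Littlewood--Paley characterisation of $H^{\mu-m}$ to sum the blocks. That outline is correct, and your accounting of why the seminorm $M^m_0$ carries up to $\frac{3d}{2}+1$ derivatives is the right explanation. Two small points worth tightening. First, $K_k(x,y)$ is not a convolution kernel because $\tilde a(x,\eta)$ genuinely depends on $x$; Young's inequality for convolutions does not literally apply, and what you want is the two-sided Schur test, which your pointwise decay estimate
\begin{align*}
|K_k(x,y)|\le C\,M^m_0(a)\,2^{km}\,\frac{2^{kd}}{(1+2^k|x-y|)^{2N}}
\end{align*}
delivers immediately since it is symmetric under $x\leftrightarrow y$ at fixed distance. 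Second, when you integrate by parts in $\eta$ the derivatives falling on $\tilde a(x,\eta)$ hit both $\widehat{a}(\zeta,\eta)$ and the admissible cutoff $\chi(\zeta,\eta)$; the decay hypothesis $|\partial_\theta^\alpha\partial_\eta^\beta\chi(\theta,\eta)|\le C_{\alpha,\beta}(1+|\eta|)^{-|\alpha|-|\beta|}$ built into the paper's definition is exactly what keeps the symbol estimate $|\partial_\eta^\alpha\tilde a(x,\eta)|\lesssim M^m_0(a)(1+|\eta|)^{m-|\alpha|}$ intact, so no order is lost. With those two clarifications your proof is complete and agrees in substance with the one in \cite{MG}.
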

\begin{lemma}\label{lem-remainder}
	If $s>0$ and $s_1,s_2\in \mathbb R$ with $s_1+s_2=s+d/2$, then we have
	\begin{align*}
		||\mathcal{R}_{\mathcal{B}}(u,a)||_{H^s}\le C ||a||_{H^{s_1}}||u||_{H^{s_2}}.
	\end{align*}
\end{lemma}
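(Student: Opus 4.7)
The plan is to prove this classical Bony remainder estimate by Littlewood--Paley decomposition, using the standard high--high interaction structure of $\mathcal{R}_{\mathcal{B}}$. The key geometric fact I will exploit is that for each pair $(k,\ell)$ with $|k-\ell|\le 2$, the product $\Delta_k a\,\Delta_\ell u$ has Fourier support contained in a ball of radius $\sim 2^k$ (since $\widehat{\Delta_k a}$ and $\widehat{\Delta_\ell u}$ are supported in annuli of size $\sim 2^k$ and $\sim 2^\ell$, and their convolution lies in the ball of radius of the sum). Consequently, $\Delta_j\bigl(\Delta_k a\,\Delta_\ell u\bigr)=0$ whenever $2^j\gg 2^k$, so only $k\gtrsim j-C$ contribute to $\Delta_j\mathcal{R}_{\mathcal{B}}(u,a)$.

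Next, I would bound each dyadic product in $L^2$ by Bernstein's inequality:
\begin{equation*}
\bigl\|\Delta_k a\,\Delta_\ell u\bigr\|_{L^2}\le \|\Delta_k a\|_{L^\infty}\|\Delta_\ell u\|_{L^2}\le C\,2^{kd/2}\|\Delta_k a\|_{L^2}\|\Delta_\ell u\|_{L^2}.
\end{equation*}
Writing $\|\Delta_k a\|_{L^2}=c_k^a\,2^{-ks_1}\|a\|_{H^{s_1}}$ and $\|\Delta_\ell u\|_{L^2}=c_\ell^u\,2^{-\ell s_2}\|u\|_{H^{s_2}}$ with $\{c_k^a\},\{c_\ell^u\}\in\ell^2$ of norm $\lesssim 1$, and using $|k-\ell|\le 2$ together with $s_1+s_2=s+d/2$, this yields
\begin{equation*}
\bigl\|\Delta_k a\,\Delta_\ell u\bigr\|_{L^2}\le C\,2^{-ks}\,c_k^a\,c_\ell^u\,\|a\|_{H^{s_1}}\|u\|_{H^{s_2}}.
\end{equation*}

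Summing over the contributing indices gives
\begin{equation*}
\bigl\|\Delta_j\mathcal{R}_{\mathcal{B}}(u,a)\bigr\|_{L^2}\le C\sum_{k\ge j-C'}2^{-ks}\,c_k^a\,c_{k}^u\,\|a\|_{H^{s_1}}\|u\|_{H^{s_2}},
\end{equation*}
so that
\begin{equation*}
2^{js}\bigl\|\Delta_j\mathcal{R}_{\mathcal{B}}(u,a)\bigr\|_{L^2}\le C\sum_{k\ge j-C'}2^{(j-k)s}c_k^a c_k^u\,\|a\|_{H^{s_1}}\|u\|_{H^{s_2}}.
\end{equation*}
Here the hypothesis $s>0$ enters crucially: it makes $\{2^{(j-k)s}\mathbf{1}_{j-k\le C'}\}$ an $\ell^1$ sequence in $(j-k)$. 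By Young's convolution inequality $\ell^1\ast\ell^2\hookrightarrow\ell^2$ applied to the convolution of this kernel with $\{c_k^a c_k^u\}\in\ell^2$ (Cauchy--Schwarz bounds the latter in $\ell^2$-norm by $\|a\|_{H^{s_1}}\|u\|_{H^{s_2}}$ up to uniform constants), one obtains $\{2^{js}\|\Delta_j\mathcal{R}_{\mathcal{B}}\|_{L^2}\}\in\ell^2$ with the desired bound, which is exactly $\|\mathcal{R}_{\mathcal{B}}(u,a)\|_{H^s}\le C\|a\|_{H^{s_1}}\|u\|_{H^{s_2}}$.

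The only delicate point is the sharp dependence on $s_1+s_2=s+d/2$: the $2^{kd/2}$ loss from Bernstein must be precisely absorbed by the Sobolev decay in both factors, and the assumption $s>0$ is what makes the geometric series over $k\ge j-C'$ convergent, producing an $\ell^1$ convolution kernel. No other mechanism would give an $H^s$ bound with a positive regularity gain, so I expect essentially no obstacle beyond bookkeeping the indices carefully.
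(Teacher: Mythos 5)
The paper states this remainder estimate as a known fact, attributing it to the standard reference [BCD] without giving a proof; your argument is precisely the standard Littlewood--Paley proof (e.g.\ Theorem 2.85 in Bahouri--Chemin--Danchin), and it is correct. Two small bookkeeping points worth tidying: in the sum over contributing indices you should keep the inner sum over $|\ell-k|\le 2$ and write $c_\ell^u$ rather than collapsing to $c_k^u$ (harmless, since a bounded shift preserves the $\ell^2$ bound); and the claim that $\{c_k^a c_\ell^u\}$ is an $\ell^2$ sequence in $k$ follows most cleanly from $\ell^2\cdot\ell^\infty\hookrightarrow\ell^2$ together with $\ell^2\subset\ell^\infty$, or equivalently from the observation that the product of two $\ell^2$ sequences is in $\ell^1\subset\ell^2$.
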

\begin{proposition}\label{composition}
Let $m\in\mathbb{R}$, and let $\rho>0$. If $a\in\Gamma^m_\rho(\mathbb{T}^d)$, $b\in\Gamma^{m'}_\rho(\mathbb{T}^d)$, then $T_aT_b-T_{a\sharp b}$ is of order $m+m'-\rho$ where
\begin{align*}
a\sharp b=\sum_{|\alpha|<\rho}\frac{1}{i^{|\alpha|}\alpha!}\partial^\alpha_\xi a\partial^\alpha_x b.
\end{align*}
Furthermore, $\forall \mu\in \mathbb{R}$ there exists a constant K such that
\begin{align*}
\|T_aT_b-T_{a\sharp b}\|_{H^{\mu}-H^{\mu-m-m'+\rho}}\le K M^m_\rho(a)M^{m'}_\rho(b).
\end{align*}
\end{proposition}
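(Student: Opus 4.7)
This is the standard Bony--Coifman--Meyer symbolic calculus identity, and I would follow the Fourier-side approach of M\'etivier cited in the statement. The strategy is to write both operators as explicit Fourier multipliers and Taylor-expand the outer symbol in the frequency variable. From the defining formula for paradifferential operators,
\begin{align*}
\widehat{T_a T_b u}(\xi) = \frac{1}{(2\pi)^{2d}} \iint \chi(\xi-\eta,\eta)\,\hat a(\xi-\eta,\eta)\,\psi(\eta)\,\chi(\eta-\zeta,\zeta)\,\hat b(\eta-\zeta,\zeta)\,\psi(\zeta)\,\hat u(\zeta)\,d\eta\,d\zeta,
\end{align*}
whereas $\widehat{T_{a\sharp b} u}(\xi)$ is a single integral against $\chi(\xi-\zeta,\zeta)\widehat{a\sharp b}(\xi-\zeta,\zeta)\psi(\zeta)\hat u(\zeta)$. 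The first step is to perform Taylor's expansion $a(x,\eta)=\sum_{|\alpha|<\rho}\frac{(\eta-\zeta)^\alpha}{\alpha!}\partial_\xi^\alpha a(x,\zeta)+R_\rho(x,\eta,\zeta)$ inside the double integral. Using $(\eta-\zeta)^\alpha \hat b(\eta-\zeta,\zeta)=i^{-|\alpha|}\widehat{\partial_x^\alpha b}(\eta-\zeta,\zeta)$, the polynomial terms reproduce exactly the symbol $a\sharp b=\sum_{|\alpha|<\rho}\frac{1}{i^{|\alpha|}\alpha!}\partial_\xi^\alpha a\,\partial_x^\alpha b$ prescribed in the statement, modulo cutoff discrepancies.

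The second step is to absorb the mismatch between the iterated cutoff $\chi(\xi-\eta,\eta)\chi(\eta-\zeta,\zeta)\psi(\eta)$ and the single cutoff $\chi(\xi-\zeta,\zeta)$. Using a Littlewood--Paley decomposition and the dyadic form of $\chi$ introduced earlier in the appendix, the difference localizes to a region where outer and inner frequencies are comparable; by admissibility of $\chi$ (the constants $\varepsilon_1,\varepsilon_2$), this contribution is a smoothing operator of order $-\infty$ bounded by a fixed seminorm of $a$ times one of $b$. The low-frequency piece where $\psi(\eta)\neq 1$ is treated analogously since $1-\psi$ is compactly supported, producing another smoothing error.

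The third step is to estimate the Taylor remainder $R_\rho$. Writing $R_\rho$ in integral form and using the $W^{\rho,\infty}$ control built into the seminorm $M^m_\rho(a)$, one exchanges the $\rho$ powers of $(\eta-\zeta)$ for $\rho$ derivatives on $b$. The resulting object is a paradifferential operator with symbol in $\Gamma^{m+m'-\rho}_0$, whose norm is bounded by $M^m_\rho(a)\,M^{m'}_\rho(b)$, and the Sobolev estimate $H^\mu\to H^{\mu-m-m'+\rho}$ follows from Proposition~\ref{para1}.

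The main obstacle is the cutoff-comparison step: rigorously reducing the product of cutoffs to the single cutoff of $T_{a\sharp b}$ modulo smoothing errors. This is combinatorial in nature and relies on the dyadic admissibility of $\chi$; the admissibility constants $\varepsilon_1,\varepsilon_2$ must be compatible with the composition, otherwise the operator $T_a T_b$ would pick up spurious contributions in overlapping frequency shells. A related technicality is handling non-integer $\rho$ in the Taylor remainder, where one must use the fractional H\"older condition encoded in the norm $W^{\rho,\infty}$ rather than a plain pointwise Taylor formula.
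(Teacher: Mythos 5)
The paper does not prove this proposition: it is stated in the appendix as a standard fact from symbolic calculus with an explicit citation to M\'etivier \cite{MG}, and it is used as a black box. Your sketch follows exactly the Fourier-side argument of the cited reference (Taylor expansion of the outer symbol at the inner frequency, identification of polynomial terms with $a\sharp b$ via $(\eta-\zeta)^\alpha \hat b(\eta-\zeta,\zeta)=i^{-|\alpha|}\widehat{\partial_x^\alpha b}(\eta-\zeta,\zeta)$, absorption of cutoff discrepancies as smoothing errors using admissibility, and an order $m+m'-\rho$ estimate for the Taylor remainder), so it is consistent with the proof the paper implicitly relies upon, though it remains a plan rather than a complete argument in the two technical steps you yourself flag.
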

\begin{proposition}\label{adjoint}
Let $m\in \mathbb{R}$, let$\rho>0$, and let $a\in\Gamma^m_\rho(\mathbb{T}^d)$. Denote by $(T_a)^*$ the adjoint operator of $T_a$ and by $\bar{a}$ the complex conjugate of a. Then $(T_a)^*-T_{a^*}$ is of order $m-\rho$ where
\begin{align*}
a^*=\sum_{|\alpha|<\rho}\frac{1}{i^{|\alpha|}\alpha!}\partial^{\alpha}_\xi\partial^{\alpha}_x \bar{a}.
\end{align*}
Furthermore, $\forall \mu$ there exists a constant K such that
\begin{align*}
\|(T_a)^*-T_a^*\|_{H^\mu-H^{\mu-m+\rho}}\le K M^m_\rho(a).
\end{align*}
\end{proposition}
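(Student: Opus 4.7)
The plan is to prove Proposition \ref{adjoint} by reducing to the classical asymptotic expansion for the adjoint of a pseudodifferential operator with smooth symbol, and then carefully tracking the loss of regularity that comes from $a$ being only $W^{\rho,\infty}$ in $x$.

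First I would realize $T_a$ as an oscillatory integral with a smoothed symbol. Define
$$\sigma_a(x,\xi) = (2\pi)^{-d}\int e^{ix\cdot\theta}\chi(\theta,\xi)\widehat{a}(\theta,\xi)\,d\theta \cdot \psi(\xi),$$
which is a $C^\infty$ function of $(x,\xi)$ because $\chi(\theta,\xi)$ is supported where $|\theta|\le\varepsilon_2|\xi|$. A direct inversion of Fourier transforms yields $T_a u = \mathrm{Op}(\sigma_a)u$, and the symbol-class seminorms of $\sigma_a$ are controlled by $M^m_\rho(a)$ via the mapping properties of the regularizer $\chi(D_x,\xi)$ analogous to Proposition \ref{para1}. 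This reduces the problem to one about a genuinely smooth-symbol pseudodifferential operator, where standard tools apply.

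Second, I would apply the classical adjoint formula: the operator $\mathrm{Op}(\sigma_a)^{*}$ has full symbol admitting the asymptotic expansion
$$\sigma_a^{*}(x,\xi) \sim \sum_{\alpha}\frac{1}{i^{|\alpha|}\alpha!}\partial_\xi^\alpha\partial_x^\alpha \overline{\sigma_a(x,\xi)}.$$
Truncating at $|\alpha|<\rho$ and observing that $\partial_\xi^\alpha\partial_x^\alpha\overline{\sigma_a}$ coincides (modulo a lower-order spectral cutoff that is itself negligible) with the paradifferential regularization of $\partial_\xi^\alpha\partial_x^\alpha \bar a$, one recovers exactly the symbol $a^{*}$ defined in the statement. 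The Taylor remainder, after symbol calculus, is then an operator of order $m-\rho$, and Proposition \ref{para1} provides the desired bound $\|(T_a)^{*}-T_{a^{*}}\|_{H^\mu\to H^{\mu-m+\rho}}\le K M^m_\rho(a)$.

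The main obstacle is the finite regularity in $x$: since $a\in\Gamma^m_\rho$ only admits $[\rho]$ classical $x$-derivatives together with a $(\rho-[\rho])$-H\"older modulus, the Taylor expansion cannot be performed naively. The fractional part must be absorbed using the spectral support of $\chi$, exploiting the fact that each factor of $\theta$ produced by an $x$-derivative on $\widehat{a}(\theta,\xi)$ is accompanied on the support of $\chi$ by the control $|\theta|\lesssim|\xi|$, which is exactly what is needed to show the remainder symbol lies in $\Gamma^{m-\rho}_0$ with seminorm bounded by $M^m_\rho(a)$. The most delicate step will be the bookkeeping that shows the off-diagonal kernel terms arising from the Fourier-transform switch in the Schwartz kernel of $(T_a)^{*}$ contribute only to lower-order operators; this can be handled via a Littlewood-Paley decomposition combined with a commutator estimate for $\chi(D_x,\xi)$ against multiplication by derivatives of $\bar a$.
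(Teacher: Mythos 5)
The paper itself does not prove Proposition~\ref{adjoint}: the appendix explicitly states that the symbolic-calculus results are quoted with proofs referred to M\'etivier~\cite{MG}. So there is no in-paper proof to compare against. Your sketch follows the standard route found in~\cite{MG} and in Taylor/H\"ormander-type treatments: write $T_a=\mathrm{Op}(\sigma_a)$ with the regularized symbol $\sigma_a(x,\xi)=\psi(\xi)\,[\chi(D_x,\xi)a(\cdot,\xi)](x)$, invoke the classical asymptotic expansion for the adjoint of a smooth-symbol pseudodifferential operator, truncate at $|\alpha|<\rho$, and use the spectral localization enforced by $\chi$ (namely $|\theta|\lesssim|\xi|$ on the support) to control the Taylor/H\"older remainder coming from the finite $W^{\rho,\infty}$ regularity in $x$. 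That is the right plan.

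Two places in your sketch are thinner than they should be. First, your final step appeals to Proposition~\ref{para1} to bound the remainder operator, but Proposition~\ref{para1} applies to $T_b$ with $b\in\Gamma^{m-\rho}_0$; the remainder $\sigma_a^{*}-\sigma_{a^{*}}$ is not naturally the paradifferential regularization of some $b\in\Gamma^{m-\rho}_0$. It is instead a smooth symbol of ``forbidden'' type $S^{m-\rho}_{1,1}$ whose $x$-Fourier transform is supported in a cone $|\theta|\le \varepsilon_2|\xi|$, and one must use the corresponding Coifman--Meyer/Stein-type boundedness theorem for such operators rather than Proposition~\ref{para1}. Second, when you identify $\partial_\xi^\alpha\partial_x^\alpha\overline{\sigma_a}$ with the regularization of $\partial_\xi^\alpha\partial_x^\alpha\bar a$, the $\partial_\xi^\alpha$ also lands on $\chi(\theta,\xi)$ and $\psi(\xi)$, producing cross terms; while each of these gains one order per $\xi$-derivative and is indeed negligible, the bookkeeping is exactly where the seminorm $M^m_\rho(a)$ enters, so it should be made explicit rather than dismissed parenthetically. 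With those two points filled in, your outline reproduces the proof in~\cite{MG}.
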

If $a=a(x)$ is independent of $\xi$, then $T_a$ is called a paraproduct. From Propositon \ref{adjoint} and Proposition \ref{composition}, we can get:
\begin{itemize}
\item If $a\in H^\alpha(\mathbb{T}^d)$ and $b\in H^\beta(\mathbb{T}^d)$ with $\alpha>\frac{d}{2}$, $\beta>\frac{d}{2}$, then
\begin{align}\label{composition1}
T_aT_b-T_{ab} \quad is\ of\  order \quad -(\min\{\alpha,\beta\}-\frac{d}{2}).
\end{align}
\item If $a\in H^\alpha(\mathbb{T}^d)$ with $\alpha>\frac{d}{2}$, then
\begin{align}\label{adjoint1}
(T_a)^*-T_{\bar{a}} \quad is\ of\ order\quad -(\alpha-\frac{d}{2}).
\end{align}
\end{itemize}
\begin{lemma}\label{symbol.est2}
Let $m>0$. If $a\in H^{\frac{d}{2}-m}(\mathbb{T}^d)$ and $u\in H^\mu(\mathbb{T}^d)$, then $T_a u\in H^{\mu-m}(\mathbb{T}^d)$. Moreover,
\begin{align*}
\|T_a u\|_{H^{\mu-m}}\le K\|a\|_{H^{\frac{d}{2}-m}}\|u\|_{H^\mu},
\end{align*}
where the constant K is independent of a and u.
\end{lemma}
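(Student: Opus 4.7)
The plan is to argue directly from the Littlewood--Paley/paraproduct definition recalled in the paper, namely
\begin{equation*}
T_a u=\sum_{k\ge 0} S_{k-3}a\,\Delta_k u,
\end{equation*}
and combine Bernstein's inequality with almost orthogonality. First I would show that the hypothesis $a\in H^{d/2-m}$ with $m>0$ gives a quantitative $L^\infty$ bound on the low-frequency cutoffs: writing $\|a\|_{H^{d/2-m}}^2\approx\sum_{j}2^{2j(d/2-m)}\|\Delta_j a\|_{L^2}^2$, Bernstein's inequality gives $\|\Delta_j a\|_{L^\infty}\lesssim 2^{jd/2}\|\Delta_j a\|_{L^2}=2^{jm}\cdot 2^{j(d/2-m)}\|\Delta_j a\|_{L^2}$, so setting $c_j:=2^{j(d/2-m)}\|\Delta_j a\|_{L^2}$ we have $(c_j)\in\ell^2$ with $\|c_j\|_{\ell^2}\lesssim\|a\|_{H^{d/2-m}}$, whence
\begin{equation*}
\|S_{k-3}a\|_{L^\infty}\le \sum_{j\le k-3}\|\Delta_j a\|_{L^\infty}\lesssim \sum_{j\le k-3} 2^{jm}c_j\lesssim 2^{km}\|a\|_{H^{d/2-m}}.
\end{equation*}
The strict positivity of $m$ is what makes this geometric sum converge; this is where the assumption $m>0$ is used decisively.

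Next I would control each dyadic piece. From the $L^\infty\times L^2$ H\"older estimate,
\begin{equation*}
\|S_{k-3}a\,\Delta_k u\|_{L^2}\le \|S_{k-3}a\|_{L^\infty}\|\Delta_k u\|_{L^2}\lesssim 2^{km}\|a\|_{H^{d/2-m}}\|\Delta_k u\|_{L^2}.
\end{equation*}
Because $S_{k-3}a$ is spectrally localized in $\{|\xi|\lesssim 2^{k-3}\}$ and $\Delta_k u$ in $\{|\xi|\sim 2^k\}$, the product $S_{k-3}a\,\Delta_k u$ has Fourier support in an annulus of size $\sim 2^k$. A standard almost-orthogonality argument (only a bounded number of the annular supports overlap at any given frequency) then yields
\begin{equation*}
\|T_a u\|_{H^{\mu-m}}^2\approx \sum_{k}2^{2k(\mu-m)}\|\Delta_k T_a u\|_{L^2}^2\lesssim \sum_{k}2^{2k(\mu-m)}\|S_{k-3}a\,\Delta_k u\|_{L^2}^2.
\end{equation*}
Inserting the previous bound gives $\sum_k 2^{2k\mu}\|\Delta_k u\|_{L^2}^2\cdot\|a\|_{H^{d/2-m}}^2\lesssim\|u\|_{H^\mu}^2\|a\|_{H^{d/2-m}}^2$, which is exactly the claim.

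The only mildly subtle point is the almost-orthogonality step: one has to verify that the product of a low-frequency cutoff with a dyadic block is indeed spectrally localized in an annulus whose radius is comparable to $2^k$, with a cutoff parameter (here $k-3$) that is small enough to keep the annuli essentially disjoint. This is standard for the cutoff $S_{k-3}$ chosen in the excerpt, where the shift by $3$ ensures the support of $\widehat{S_{k-3}a}$ lies well inside $|\xi|<2^{k-2}$ while $\widehat{\Delta_k u}$ is supported in $2^{k-1}\le|\xi|\le 2^{k+1}$, so the convolution is supported in an annulus of the form $2^{k-2}\le|\xi|\le 2^{k+2}$. Once this is checked, everything else is a bookkeeping exercise; I do not foresee any further obstacles, and the constant $K$ obtained this way is manifestly independent of both $a$ and $u$, depending only on $d$, $m$ and $\mu$.
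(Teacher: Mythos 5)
Your proof is correct. The paper does not actually supply a proof of this lemma — it is stated in the appendix as a standard fact, with the remark that ``the details of proof can be find in \cite{MG}'' — so there is no in-text argument to compare against; your Littlewood--Paley argument is exactly the one found in the cited references (e.g.\ \cite{BCD}, Theorem 2.82, or \cite{MG}). The three ingredients you use (Bernstein's inequality to convert the negative-index $L^2$ hypothesis on $a$ into a growing $L^\infty$ bound $\|S_{k-3}a\|_{L^\infty}\lesssim 2^{km}\|a\|_{H^{d/2-m}}$, which uses $m>0$ via a Cauchy--Schwarz/geometric-series step; the $L^\infty\times L^2$ bound on each dyadic block; and spectral localization of $S_{k-3}a\,\Delta_k u$ in an annulus $|\xi|\sim 2^k$ together with almost orthogonality of the resulting blocks) are precisely the standard proof, and the constant you obtain visibly depends only on $d,m,\mu$.

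One tiny presentational point, which does not affect correctness: in the display $\sum_k 2^{2k(\mu-m)}\|\Delta_k T_a u\|_{L^2}^2 \lesssim \sum_k 2^{2k(\mu-m)}\|S_{k-3}a\,\Delta_k u\|_{L^2}^2$, the bound $\|\Delta_k T_a u\|_{L^2}\le \sum_{|j-k|\le 2}\|S_{j-3}a\,\Delta_j u\|_{L^2}$ produces, after squaring and summing, a finite (bounded) shift in the weights; you gloss this in words (``only a bounded number of the annular supports overlap'') rather than writing the double sum, which is fine for a sketch but would need a line or two of Cauchy--Schwarz in a fully detailed write-up. Otherwise nothing is missing.
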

\begin{proposition}\label{symbol.est3}
Let $\alpha,\beta\in \mathbb{R}$ such that $\alpha>\frac{d}{2}$, $\beta>\frac{d}{2}$, then
\begin{itemize}
\item $\forall F\in C^\infty$, if $a\in H^\alpha(\mathbb{T}^d)$, then
\begin{align*}
F(a)-F(0)-T_{F'(a)}a\in H^{2\alpha-\frac{d}{2}}(\mathbb{T}^d).
\end{align*}
\item If $a\in H^\alpha(\mathbb{T}^d)$ and $b\in H^\beta(\mathbb{T}^d)$, then $ab-T_ab-T_ba\in H^{\alpha+\beta-\frac{d}{2}}(\mathbb{T}^d)$. Moreover,
\begin{align*}
\|ab-T_ab-T_ba\|_{H^{\alpha+\beta-\frac{d}{2}}}\le K\|a\|_{H^\alpha(\mathbb{T}^d)}\|b\|_{H^\beta(\mathbb{T}^d)}
\end{align*}
where the constant K is independent of a, b.
\end{itemize}
\end{proposition}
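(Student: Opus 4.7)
The second bullet is immediate from tools already recalled. Bony's decomposition gives $ab - T_a b - T_b a = \mathcal{R}_{\mathcal{B}}(a,b)$, so I would invoke Lemma \ref{lem-remainder} with $s = \alpha + \beta - d/2$, $s_1 = \alpha$, $s_2 = \beta$. Since $\alpha,\beta > d/2$ these satisfy $s > 0$ and $s_1+s_2 = s+d/2$, yielding exactly $\|\mathcal{R}_{\mathcal{B}}(a,b)\|_{H^{\alpha+\beta-d/2}} \le C\|a\|_{H^\alpha}\|b\|_{H^\beta}$ with $C$ independent of $a,b$.

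For the first bullet, the plan is a classical Bony-style dyadic telescoping. Write $a = \sum_{k\ge0}\Delta_k a$ with $S_k a = \sum_{\ell\le k}\Delta_\ell a$ and $S_{-1} a := 0$, and expand
\begin{align*}
F(a)-F(0) = \sum_{k\ge 0}\bigl[F(S_k a)-F(S_{k-1}a)\bigr] = \sum_{k\ge 0} m_k\,\Delta_k a,
\end{align*}
with $m_k := \int_0^1 F'(S_{k-1}a + t\Delta_k a)\,dt$. Since the paraproduct is $T_{F'(a)}a = \sum_k S_{k-3}F'(a)\,\Delta_k a$, the remainder decomposes as
\begin{align*}
F(a)-F(0)-T_{F'(a)}a = \sum_{k\ge 0}\bigl(m_k - S_{k-3}F'(a)\bigr)\,\Delta_k a.
\end{align*}
I would then split $m_k - S_{k-3}F'(a)$ into the Taylor remainder piece $m_k - F'(S_{k-1}a)$, which is quadratic in $\Delta_k a$ via $|m_k - F'(S_{k-1}a)| \le \tfrac{1}{2}\|F''\|_{L^\infty}\|\Delta_k a\|_{L^\infty}$, and two high-frequency tail pieces $F'(S_{k-1}a) - F'(a)$ and $F'(a) - S_{k-3}F'(a)$ that measure the gap between $a$ (respectively $F'(a)$) and its low-frequency truncation.

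Each of the three pieces multiplied by $\Delta_k a$ is spectrally supported in an annulus of radius $\sim 2^k$, so by quasi-orthogonality it suffices to prove $\sum_k 2^{2k(2\alpha-d/2)}\|(\cdots)\Delta_k a\|_{L^2}^2 < \infty$. The Bernstein bound $\|\Delta_\ell a\|_{L^\infty}\lesssim 2^{\ell(d/2-\alpha)}\|a\|_{H^\alpha}$, which sums geometrically over $\ell \ge k-3$ thanks to $\alpha > d/2$, combined with $\|\Delta_k a\|_{L^2}\lesssim 2^{-k\alpha}\|a\|_{H^\alpha}$, gives each summand an $L^2$ bound of order $2^{-k(2\alpha-d/2)}\|F\|_{C^2(\mathrm{range}(a))}\|a\|_{H^\alpha}^2$, so the squared $H^{2\alpha-d/2}$ sum converges. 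The main obstacle I expect is controlling the last split $F'(a) - S_{k-3}F'(a)$: this requires first that $F'(a) \in H^\alpha$ (a Moser-type composition estimate that uses $F' \in C^\infty$ together with $H^\alpha \hookrightarrow L^\infty$), and second a careful verification that the spectral supports nest as claimed — which is precisely where the specific ``$k-3$'' cutoff in the paraproduct enters through the admissibility of $\chi$. This bookkeeping is standard but constitutes the technical heart of the argument.
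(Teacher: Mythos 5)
The paper does not actually prove Proposition~\ref{symbol.est3}: it appears in Appendix~A.1 among a list of standard facts quoted without proof, with the blanket citation ``the details of proof can be find in~\cite{MG}''. So there is no in-paper argument to compare against; what follows assesses your sketch on its own merits.

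Your treatment of the second bullet is correct and complete: Bony's decomposition reduces it to the remainder $\mathcal R_{\mathcal B}(a,b)$, and Lemma~\ref{lem-remainder} with $(s,s_1,s_2)=(\alpha+\beta-d/2,\alpha,\beta)$ gives exactly the stated bound; the hypotheses $s>0$ and $s_1+s_2=s+d/2$ hold because $\alpha,\beta>d/2$.

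For the first bullet, the telescoping $F(a)-F(0)=\sum_k m_k\,\Delta_k a$ with $m_k=\int_0^1 F'(S_{k-1}a+t\Delta_k a)\,dt$ and the three-way split of $m_k-S_{k-3}F'(a)$ is indeed the classical Meyer argument, and your quantitative bounds on each piece (Taylor remainder quadratic in $\Delta_k a$; tails of $a$ and of $F'(a)$ geometric via Bernstein, using $\alpha>d/2$; $F'(a)\in H^\alpha$ via the para-composition/Moser estimate) are the right ones, giving $\|(\cdot)\Delta_k a\|_{L^2}\lesssim 2^{-k(2\alpha-d/2)}$. However, the key assertion ``each of the three pieces multiplied by $\Delta_k a$ is spectrally supported in an annulus of radius $\sim 2^k$'' is false: $m_k$, $F'(S_{k-1}a)$ and $F'(a)$ are compositions and are \emph{not} band-limited, so none of the three coefficient functions has compactly supported Fourier transform, and the products with $\Delta_k a$ have unrestricted spectrum. (Even $\bigl(F'(a)-S_{k-3}F'(a)\bigr)\Delta_k a$ can have arbitrarily low frequencies from high--high interactions.) This is more than bookkeeping: without some spectral localization you cannot pass from the $L^2$ bound $\lesssim 2^{-k(2\alpha-d/2)}$ to an $H^{2\alpha-d/2}$ conclusion, since $\sum_j 4^{js}\|\Delta_j\sum_k u_k\|_{L^2}^2$ need not converge. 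The standard fix is to decompose each coefficient $c_k$ further as $c_k=S_{k-N}c_k+(1-S_{k-N})c_k$: the low part times $\Delta_k a$ \emph{is} supported in an annulus $\sim 2^k$ and is handled by quasi-orthogonality, while the high part is controlled pointwise by $2^{-Nk}$-type gains (using that spatial derivatives of $S_{k-1}a$, $\Delta_k a$, and their compositions with $F'$ carry at worst a factor $2^k$) and summed directly in $H^{2\alpha-d/2}$. You flag this as ``the technical heart'', which is accurate, but the phrase ``verification that the spectral supports nest'' understates it: the supports do not nest, and an additional decomposition step is genuinely needed.
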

We also represent here some nonlinear estimates in Sobolev spaces
\begin{itemize}
\item If $u_j\in H^{s_j}(\mathbb{T}^d),\ j=1,2$, and $s_1+s_2>0$, then $u_1u_2\in H^{s_0}(\mathbb{T}^d)$; if
\begin{align*}
s_0\le s_j, \quad j=1,2, \quad and \quad s_0\le s_1+s_2-\frac{d}{2}, \quad then
\end{align*}
\begin{align*}
\|u_1u_2\|_{H^{s_0}}\le K \|u_1\|_{H^{s_1}}\|u_2\|_{H^{s_2}},
\end{align*}
where the last inequality is strict if $s_1$ or $s_2$ or $-s_0$ is equal to 0.
\item If $u\in H^s(\mathbb{T}^d)$ with $s>\frac{d}{2}$, then $\forall F\in C^\infty$ vanishing at the origin,
\begin{align*}
\|F(u)\|_{H^s}\le C(\|u\|_{H^s}),
\end{align*}
where the constant C is non-decreasing and depending only on F.
\end{itemize}
Recall the definition \ref{DefSymbol1}, here we list some properties of the symbol
$a\in \sum^m$(see \cite{ABZ1}):
\begin{proposition}\label{symbol.pro2}
Let $m\in \mathbb R$ and $\mu\in \mathbb R$. Then there exists a function C such that for all symbols $a\in\Sigma^m$ and all $t\in [0,T]$,
\begin{align*}
\|T_{a(t)}u\|_{H^{\mu-m}}\le C(\|f\|_{H^{s-1}})\|u\|_{H^\mu}.
\end{align*}
\end{proposition}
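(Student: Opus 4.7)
The plan is to verify that $a \in \Sigma^m$ defines a symbol in $\Gamma_0^m$ whose seminorm is controlled by $\|f\|_{H^{s-1}}$, and then invoke Proposition \ref{para1} directly. Split $a = a^{(m)} + a^{(m-1)}$ and estimate each piece separately.

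\textbf{Step 1: Bound $M_0^m(a^{(m)})$.} Since $a^{(m)}(t,x,\xi) = F(\nabla f(t,x),\xi)$ with $F\in C^\infty$ and homogeneous of order $m$ in $\xi$, one has
\[
\partial_\xi^\alpha a^{(m)}(t,x,\xi) = (\partial_\xi^\alpha F)(\nabla f(t,x),\xi),
\]
which is homogeneous of order $m-|\alpha|$ in $\xi$. For $|\xi|\ge 1/2$,
\[
|\partial_\xi^\alpha a^{(m)}(t,x,\xi)| \le \bigl(\sup_{|\zeta|\le \|\nabla f\|_{L^\infty},\ |\omega|=1}|\partial_\xi^\alpha F(\zeta,\omega)|\bigr)(1+|\xi|)^{m-|\alpha|}.
\]
The supremum is a continuous function of $\|\nabla f\|_{L^\infty}$; invoking the Sobolev embedding $H^{s-1}(\mathbb T^2)\hookrightarrow W^{1,\infty}(\mathbb T^2)$, valid since $s\ge 6>1+d/2+1$, this gives $M_0^m(a^{(m)}) \le C(\|f\|_{H^{s-1}})$.

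\textbf{Step 2: Bound $M_0^{m-1}(a^{(m-1)})$.} Since $a^{(m-1)} = \sum_{|\alpha|=2} G_\alpha(\nabla f,\xi)\,\partial_x^\alpha f$ with $G_\alpha\in C^\infty$ and homogeneous of order $m-1$ in $\xi$, the same argument yields
\[
|\partial_\xi^\beta a^{(m-1)}(t,x,\xi)| \le C(\|\nabla f\|_{L^\infty})\,\|f\|_{W^{2,\infty}}(1+|\xi|)^{m-1-|\beta|}.
\]
Again by Sobolev embedding $H^{s-1}\hookrightarrow W^{2,\infty}$, valid since $s-1\ge 5 > d/2+2$, I obtain $M_0^{m-1}(a^{(m-1)}) \le C(\|f\|_{H^{s-1}})$.

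\textbf{Step 3: Apply Proposition \ref{para1}.} The operator bound gives
\[
\|T_{a^{(m)}}u\|_{H^{\mu-m}} \le K\,M_0^m(a^{(m)})\,\|u\|_{H^\mu}, \qquad
\|T_{a^{(m-1)}}u\|_{H^{\mu-m+1}} \le K\,M_0^{m-1}(a^{(m-1)})\,\|u\|_{H^\mu}.
\]
Combining these with the trivial embedding $H^{\mu-m+1}(\mathbb T^2)\hookrightarrow H^{\mu-m}(\mathbb T^2)$ and the bounds from Steps 1–2 yields
\[
\|T_{a(t)}u\|_{H^{\mu-m}} \le C(\|f\|_{H^{s-1}})\,\|u\|_{H^\mu},
\]
which is the claim.

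There is no serious obstacle here: the statement is a direct consequence of the standard boundedness theorem for paradifferential operators once one checks that the elements of $\Sigma^m$ have symbol seminorms controlled by $\|f\|_{H^{s-1}}$. The only care needed is to track which Sobolev embeddings are used for $\nabla f$ and $\nabla^2 f$, both of which are comfortably covered by the assumption $s\ge 6$.
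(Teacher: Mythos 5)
Your proof is correct, and it is the standard argument: bound the symbol seminorm $M_0^m(a^{(m)})$ and $M_0^{m-1}(a^{(m-1)})$ by a continuous function of $\|f\|_{H^{s-1}}$ via Sobolev embedding, then invoke Proposition~\ref{para1}. The paper itself cites this fact from~\cite{ABZ1} without proof, and your argument is exactly what is done there.
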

\begin{proposition}\label{symbol.pro3}
Let $m\in \mathbb R$ and $\mu\in \mathbb R$. Then there exists a function C such that for all symbols $a\in\Sigma^m$ and all $t\in [0,T]$,
\begin{align*}
\|u\|_{H^{\mu+m}}\le C(\|f\|_{H^{3}})(\|T_{a(t)}u\|_{H^\mu}+\|u\|_{L^2}).
\end{align*}
\end{proposition}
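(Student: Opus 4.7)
The plan is to construct a parametrix for $T_a$ from the ellipticity of the principal symbol, and then apply it to $T_au$ to recover $u$ modulo smoother error terms. The argument breaks into three steps: building $b\in\Sigma^{-m}$ with $b\sharp a=1$, invoking the symbolic calculus of Proposition~\ref{symbol.pro1}, and closing the resulting inequality by interpolation.

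First, since $a^{(m)}(t,x,\xi)=F(\nabla f,\xi)$ satisfies $F(\zeta,\xi)\ge C(\zeta)|\xi|^m>0$ on $\mathbb R^d\times(\mathbb R^d\setminus 0)$, the symbol $1/a^{(m)}$ is smooth in $\xi\neq 0$, homogeneous of degree $-m$, and again of the form $\tilde F(\nabla f,\xi)$ with $\tilde F=1/F$. I will define $b\in\Sigma^{-m}$ by
\begin{align*}
b^{(-m)}=\frac{1}{a^{(m)}},\qquad b^{(-m-1)}=-\frac{1}{a^{(m)}}\Bigl(a^{(m-1)}b^{(-m)}+\tfrac{1}{i}\partial_\xi a^{(m)}\cdot\partial_x b^{(-m)}\Bigr),
\end{align*}
chosen so that the composition formula of Proposition~\ref{symbol.pro1} yields $b\sharp a=1$ identically. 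A chain-rule computation confirms $b\in\Sigma^{-m}$: its principal part is $\tilde F(\nabla f,\xi)$, and its subprincipal part, after expanding $\partial_x b^{(-m)}$, takes the required structural form $\sum_{|\alpha|=2}\tilde G_\alpha(\nabla f,\xi)\partial^\alpha f$. The seminorms of $b$ are bounded by functions of $\|\nabla f\|_{L^\infty}$ and $\|\nabla^2 f\|_{L^\infty}$, hence by $C(\|f\|_{H^3})$ via Sobolev embedding on $\mathbb T^2$.

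Second, Proposition~\ref{symbol.pro1} implies that $R:=T_bT_a-T_{b\sharp a}=T_bT_a-T_1$ has order $m+(-m)-2=-2$, with operator norm bounded by $C(\|f\|_{H^3})$. A direct Fourier computation identifies $T_1$ with the low-frequency cutoff $\psi(D)$, since the symbol $a\equiv 1$ gives $\widehat{1}(\theta)=(2\pi)^d\delta(\theta)$ and $\chi(0,\xi)=1$. Thus $(I-T_1)u$ has Fourier support in $\{|\xi|\le 2\}$ and hence $\|(I-T_1)u\|_{H^{\mu+m}}\le C\|u\|_{L^2}$. Writing $u=(I-T_1)u+T_bT_au-Ru$ and bounding $\|T_bT_au\|_{H^{\mu+m}}\le C(\|f\|_{H^3})\|T_au\|_{H^\mu}$ by Proposition~\ref{para1} applied to $b$, I obtain
\begin{align*}
\|u\|_{H^{\mu+m}}\le C(\|f\|_{H^3})\bigl(\|T_au\|_{H^\mu}+\|u\|_{L^2}+\|u\|_{H^{\mu+m-2}}\bigr).
\end{align*}

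Third, the Gagliardo--Nirenberg interpolation $\|u\|_{H^{\mu+m-2}}\le\varepsilon\|u\|_{H^{\mu+m}}+C_\varepsilon\|u\|_{L^2}$ lets me absorb the last term into the left-hand side by choosing $\varepsilon$ small. The main technical point, and the one requiring care, is that the constant depends only on $\|f\|_{H^3}$ rather than $\|f\|_{H^{s-1}}$; this forces me to invoke Proposition~\ref{para1} directly with the explicit seminorm bound $M^{-m}_0(b)\le C(\|f\|_{W^{2,\infty}})\le C(\|f\|_{H^3})$ for the parametrix, rather than using Proposition~\ref{symbol.pro2} as a black box, since the structural dependence of $b$ on at most $\nabla f$ and $\nabla^2 f$ is what permits the sharper constant.
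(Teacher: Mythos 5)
Your parametrix argument is the standard route for this kind of elliptic estimate, and is essentially the approach taken in the cited reference \cite{ABZ1}; the paper itself offers no proof and simply cites that source, so there is no paper proof to compare against directly. The structural steps — construct $b$ with $b^{(-m)}=1/a^{(m)}$, identify $T_1$ with $\psi(D)$, treat the low-frequency piece by $\|(I-T_1)u\|_{H^{\mu+m}}\le C\|u\|_{L^2}$, and absorb the smoothing error by interpolation — are all correct and complete.

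Two points deserve flagging. First, a small slip: to have $T_bT_a\sim T_{b\sharp a}$ with $b\sharp a=1$, the corrector should be
\begin{align*}
b^{(-m-1)}=-\frac{1}{a^{(m)}}\Bigl(b^{(-m)}a^{(m-1)}+\tfrac{1}{i}\partial_\xi b^{(-m)}\cdot\partial_x a^{(m)}\Bigr),
\end{align*}
whereas you wrote $\partial_\xi a^{(m)}\cdot\partial_x b^{(-m)}$, which instead cancels the subprincipal part of $a\sharp b$. This is harmless for your argument: with your $b$ one still gets $b\sharp a=1+\tfrac{1}{i}\{a^{(m)},b^{(-m)}\}$, so $T_bT_a-T_1$ is of order $-1$ rather than $-2$, which the interpolation step absorbs just as well. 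Indeed even $b=b^{(-m)}$ alone would suffice.

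The second point is a genuine gap in the claimed constant. You assert $\|f\|_{W^{2,\infty}}\le C(\|f\|_{H^3})$ on $\mathbb T^2$, but $H^3(\mathbb T^2)$ embeds only into $W^{2-\varepsilon,\infty}$ for $\varepsilon>0$; the borderline embedding into $W^{2,\infty}$ fails. Consequently, the bound you produce is $C(\|f\|_{W^{2,\infty}})$, which is strictly stronger than the stated $C(\|f\|_{H^3})$: the subprincipal part of $b$, and equally the subprincipal term $a^{(m-1)}=\sum_{|\alpha|=2}G_\alpha(\nabla f,\xi)\partial^\alpha f$ of the given symbol, carry $\nabla^2 f$ in $L^\infty$ and cannot be controlled by $\|f\|_{H^3}$ alone. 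Obtaining the literal $H^3$ dependence would require invoking Proposition~\ref{composition} at fractional regularity $\rho\in(0,1)$ so that only $\nabla f\in C^\rho$ is used, dropping the subprincipal corrector, and handling the $a^{(m-1)}$ contribution by exploiting its linear dependence on $\nabla^2 f$ together with paraproduct estimates in Sobolev (rather than $L^\infty$) norms. Since the paper's hypotheses supply $\|f\|_{H^{s+\frac12}}$ with $s\ge 6$, the distinction is immaterial to the application, and the $H^3$ in the statement may itself be an imprecise transcription of the constant in \cite{ABZ1}; but as written your justification of the constant does not close.
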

\subsection{Div-Curl system}
From Section 5 of \cite{SWZ1}, we know that for each div-curl system
\begin{equation}\label{eq:div-curl}
  \left\{
  	\begin{array}{ll}
  		\curl \vu =\vom,\quad\div \vu=g& \text{ in }\quad\Om_f^+,\\
\vu\cdot\vN_f =\vartheta& \text{ on}\quad \Gamma_f,   \\
\vu\cdot\ve_3 = 0,\quad \int_{\bbT^2} u_i dx'=\alpha_i (i=1,2)& \text{ on}\quad\Gamma^{+}.
  	\end{array}
  \right.
\end{equation}
with $f\in H^{s+\frac{1}{2}}(\bbT^2)$ for $s\ge 2$ and satisfying
\begin{align*}
	-(1-c_0)\le f\le(1-c_0),
\end{align*}
have a unique solution.

\begin{proposition}\label{prop:div-curl}
	Let $\sigma \in [2,s]$ be an integer. Given $\vom, g\in H^{\sigma-1}(\Omega_f^+)$, $\vartheta\in H^{\sigma-\frac12}(\Gamma_f)$ with the compatiblity condition:
	\begin{align*}
	  -\int_{\Om_f^+} g dx=\int_{\Gamma_f} \vartheta ds,
	\end{align*}
	and $\vom$ satisfies
	\begin{align*}
		&\div\vom=0\quad \mathrm{in}\quad \Omega_f^+,\quad\int_{\Gamma^+}\om_3dx'=0,
	\end{align*}
	Then there exists a unique $\vu\in H^{\sigma}(\Omp)$ of the div-curl system (\ref{eq:div-curl}) so that
	\begin{align*}
		\|\vu\|_{H^{\sigma}(\Omega_f^+)}\le C\big(c_0,\|f\|_{H^{s+\f12}}\big)\Big(\|\vom\|_{H^{\sigma-1}(\Omega_f^+)}+\|g\|_{H^{\sigma-1}(\Omega_f^+)}
		 +\|\vartheta\|_{H^{\sigma-\frac12}(\Gamma_f)}+|\alpha_1|+|\alpha_2|\Big).
	\end{align*}
\end{proposition}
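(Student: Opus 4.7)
The plan is to establish this by a Hodge-type decomposition combined with elliptic regularity in the $H^{s+1/2}$ domain $\Omega_f^+$. First I would pull the system back to the fixed reference domain $\Omega_*^+$ via the harmonic coordinate $\Phi_f^+$ of Section~2, so that all elliptic problems are posed on a fixed domain with coefficients depending on $\nabla\Phi_f^+$, which are controlled in $H^{s-\frac{1}{2}}$ by $\|f\|_{H^{s+\frac{1}{2}}}$. The coefficient products and compositions then fall under the product and composition estimates already recalled for the harmonic coordinate.

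Next I would split $\vu = \vu_H + \nabla\phi + \vw$, where each piece absorbs one of the data $(\alpha,g,\vartheta,\vom)$. For the harmonic part $\vu_H$, note that $\Omega_f^+$ has nontrivial first cohomology generated by the two toroidal directions, so there is a two-dimensional space of smooth harmonic fields that are divergence-free, curl-free, tangent to $\Gamma_f$ and to $\Gamma^+$; I would pick $\vu_H$ inside this space so that $\int_{\Gamma^+}(\vu_H)_i\,dx'=\alpha_i$, which is a linear algebra problem after a reference construction. For $\phi$ I would solve the Neumann problem $\Delta\phi = g$ in $\Omega_f^+$ with $\nabla\phi\cdot\vN_f = \vartheta - \vu_H\cdot\vN_f$ on $\Gamma_f$ and $\pa_3\phi=0$ on $\Gamma^+$; the compatibility $-\int g\,dx=\int\vartheta\,ds$ assumed in the proposition, combined with the fact that $\vu_H$ is divergence-free and tangent to the boundary, is exactly the Fredholm compatibility condition for this Neumann problem. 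Finally, for $\vw$ I would look for a divergence-free vector potential $\boldsymbol\psi$ with $-\Delta\boldsymbol\psi = \vom$ under the gauge $\dive\boldsymbol\psi=0$ and set $\vw = \nabla\times\boldsymbol\psi$; the hypotheses $\dive\vom=0$ in $\Omega_f^+$ and $\int_{\Gamma^+}\omega_3\,dx'=0$ are exactly the compatibility conditions ensuring that such a $\boldsymbol\psi$ exists with the tangential-type boundary conditions needed so that $\vw$ contributes nothing to the normal trace on $\Gamma_f$, to $\vw\cdot\ve_3$ on $\Gamma^+$, and to the integrals on $\Gamma^+$.

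For the $H^{\sigma}$ estimate I would apply the standard elliptic regularity on $H^{s+\frac{1}{2}}$ domains to each of $\phi$ and $\boldsymbol\psi$, gaining one derivative from the data; the harmonic piece $\vu_H$ is $C^\infty$ and bounded by $|\alpha_1|+|\alpha_2|$. Summing, one obtains
\begin{align*}
\|\vu\|_{H^\sigma(\Omega_f^+)} \le C\bigl(c_0,\|f\|_{H^{s+\frac{1}{2}}}\bigr)\Bigl(\|\vom\|_{H^{\sigma-1}}+\|g\|_{H^{\sigma-1}}+\|\vartheta\|_{H^{\sigma-\frac{1}{2}}}+|\alpha_1|+|\alpha_2|\Bigr).
\end{align*}
Uniqueness follows from a standard energy argument: a solution with zero data is harmonic (curl-free and divergence-free), has vanishing normal trace on $\Gamma_f$ and on $\Gamma^\pm$, and has vanishing circulation on $\Gamma^+$, which forces $\vu\equiv 0$ in the doubly periodic slab.

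The main obstacle is the limited regularity of the moving boundary: since $f$ is only $H^{s+\frac{1}{2}}$, the domain is not smooth and standard Schauder-type elliptic estimates are unavailable. The remedy is to flatten by the harmonic lift $\Phi_f^+$, which provides the sharp regularity class matching the hypotheses, and to use variable-coefficient $H^\sigma$ estimates for the resulting elliptic equations, with coefficients controlled by $\|f\|_{H^{s+\frac{1}{2}}}$ through the product and composition estimates of Section~2. Careful bookkeeping of these coefficient norms, together with the precise matching of the compatibility conditions to the Fredholm conditions of the two Neumann/vector problems, is the technical heart of the argument, and is exactly what is carried out in \cite{SWZ1}.
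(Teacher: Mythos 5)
The paper gives no proof of this proposition; it simply cites Section~5 of \cite{SWZ1} in the sentence immediately preceding the statement. Your Hodge-type decomposition $\vu=\vu_H+\nabla\phi+\nabla\times\boldsymbol\psi$, with the harmonic piece carrying the circulations, the scalar potential handling $(g,\vartheta)$ through a Neumann problem whose Fredholm condition is precisely the stated compatibility, and the vector potential handling $\vom$ with the hypotheses $\div\vom=0$ and $\int_{\Gamma^+}\om_3\,dx'=0$ as solvability conditions, is indeed the standard route and matches what \cite{SWZ1} carries out; the flattening via $\Phi_f^+$ and the reduction to variable-coefficient elliptic estimates on the reference domain is also the same strategy. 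One small correction: since $f\in H^{s+\frac12}(\bbT^2)$ gives $\Phi_f^+\in H^{s+1}(\Omega_*^+)$, the Jacobian coefficients $\nabla\Phi_f^+$ lie in $H^s(\Omega_*^+)$, not $H^{s-\frac12}$ as you wrote; this does not affect the argument for $\sigma\le s$ but should be stated correctly.
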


\subsection{Commutator estimate}

\begin{lemma}\label{lem:commutator}
	If $s>1+\frac{d}{2}$, then we have
	\begin{equation}
		\big\|[a, \langle\na\rangle^s]u\big\|_{L^2}\le C\|a\|_{H^{s}}\|u\|_{H^{s-1}}.
	\end{equation}
\end{lemma}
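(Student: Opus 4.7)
My plan is to use Bony's paraproduct decomposition to split the commutator into pieces where I can apply either symbolic calculus or direct Littlewood--Paley bounds. Specifically, writing
\begin{align*}
au = T_a u + T_u a + R(a,u), \qquad a\langle\na\rangle^s u = T_a\langle\na\rangle^s u + T_{\langle\na\rangle^s u}a + R(a,\langle\na\rangle^s u),
\end{align*}
and subtracting $\langle\na\rangle^s$ applied to the first identity from the second, I obtain the decomposition
\begin{align*}
[a,\langle\na\rangle^s]u = [T_a,\langle\na\rangle^s]u + \bigl(T_{\langle\na\rangle^s u}a - \langle\na\rangle^s T_u a\bigr) + \bigl(R(a,\langle\na\rangle^s u) - \langle\na\rangle^s R(a,u)\bigr),
\end{align*}
and I will estimate each piece separately.

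For the first piece, I view $\langle\na\rangle^s$ as (a smoothing perturbation of) the paradifferential operator of symbol $\langle\xi\rangle^s \in \Gamma_\rho^s$. Since $s > 1 + d/2$ implies $a \in W^{1,\infty}$, Proposition \ref{composition} with $\rho=1$ gives that $[T_a,\langle\na\rangle^s]$ has order $s-1$ with operator norm $\le C\|a\|_{W^{1,\infty}} \le C\|a\|_{H^s}$, yielding the desired bound by $\|a\|_{H^s}\|u\|_{H^{s-1}}$. For the third piece, I would invoke Lemma \ref{lem-remainder}: applied with the exponents $(s_1,s_2) = (d/2+1, s-1)$ to control $\|R(a,u)\|_{H^s}$, and with $(s_1,s_2)=(d/2+1, -1)$ to control $\|R(a,\langle\na\rangle^s u)\|_{L^2}$. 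Both indices are admissible because $s-1 > d/2$ and $s > d/2 + 1$.

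The main obstacle is the middle piece $T_{\langle\na\rangle^s u}a - \langle\na\rangle^s T_u a$, because $\langle\na\rangle^s u$ only lies in $H^{-1}$, so its low-frequency projections cannot be bounded in $L^\infty$ by $\|u\|_{H^{s-1}}$. I would handle the two terms separately rather than looking for cancellation. For $\langle\na\rangle^s T_u a$, since $s-1 > d/2$ we have $u\in L^\infty$, and standard paraproduct estimates give $\|T_u a\|_{H^s} \le C\|u\|_{L^\infty}\|a\|_{H^s} \le C\|u\|_{H^{s-1}}\|a\|_{H^s}$. For $T_{\langle\na\rangle^s u}a = \sum_k S_{k-3}(\langle\na\rangle^s u)\Delta_k a$, I would use Bernstein dyadically: $\|\De_j \langle\na\rangle^s u\|_{L^\infty} \lesssim 2^{j(s-1+d/2)}\|\De_j u\|_{H^{s-1}}$, and after Cauchy--Schwarz in $j \le k-3$ this gives $\|S_{k-3}(\langle\na\rangle^s u)\|_{L^\infty} \lesssim 2^{k(d/2+1)}\|u\|_{H^{s-1}}$, which paired with $\|\De_k a\|_{L^2} \le 2^{-k(d/2+1)}c_k\|a\|_{H^{d/2+1}}$ and almost-orthogonality produces the required $L^2$ bound.

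Assembling the three pieces completes the proof. The point where the hypothesis $s > 1 + d/2$ is used most tightly is in estimating $T_{\langle\na\rangle^s u}a$: it ensures $H^s \hookrightarrow H^{d/2+1}$ so that the Bernstein-type argument above closes, and it simultaneously gives $u \in L^\infty$ needed for the other term. Everything else is robust and follows the standard Kato--Ponce strategy specialized to the Fourier multiplier $\langle\na\rangle^s$.
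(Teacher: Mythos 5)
Your proof is correct in substance, and it is worth noting that the paper itself states Lemma~\ref{lem:commutator} without proof — it is a standard Kato--Ponce type commutator estimate — so there is no ``paper's proof'' to compare against; the Bony paraproduct decomposition you use is the standard route to such estimates.

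Two small remarks on the details. First, for the piece $T_{\langle\na\rangle^s u}a$, your hand-rolled Bernstein/Cauchy--Schwarz argument reproves a special case of the paper's Lemma~\ref{symbol.est2}: since $\langle\na\rangle^s u\in H^{-1}$, that lemma with $m=\frac{d}{2}+1$ gives $\|T_{\langle\na\rangle^s u}a\|_{H^{s-\frac{d}{2}-1}}\le C\|\langle\na\rangle^s u\|_{H^{-1}}\|a\|_{H^s}=C\|u\|_{H^{s-1}}\|a\|_{H^s}$, and $s-\frac{d}{2}-1>0$ gives the $L^2$ bound. Second, and more substantively, your choice of indices $(s_1,s_2)=(\frac{d}{2}+1,-1)$ for $R(a,\langle\na\rangle^s u)$ puts the target exponent at exactly $0$, which is outside the hypothesis $s>0$ of Lemma~\ref{lem-remainder}; the clean fix is to use the full regularity of $a$, taking $s_1=s$ and $s_2=-1$, which yields $\|R(a,\langle\na\rangle^s u)\|_{H^{s-1-\frac{d}{2}}}\le C\|a\|_{H^s}\|u\|_{H^{s-1}}$ with target exponent $s-1-\frac{d}{2}>0$, so the lemma applies directly and embeds into $L^2$. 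With these adjustments the argument is complete; everything else, including the treatment of $[T_a,\langle\na\rangle^s]u$ via symbolic calculus with $\rho=1$ and the handling of the low-frequency discrepancy between $\langle\na\rangle^s$ and $T_{\langle\xi\rangle^s}$, is sound.
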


\subsection{Sobolev estimates of DN operator}
\begin{proposition}\label{prop:DN-Hs}
	If $f\in H^{s+\frac{1}{2}}(\bbT^2)$ for $s>\frac{5}{2}$, then it holds that for any $\sigma\in \big[-\frac{1}{2},s- \frac{1}{2}	 \big]$,
	\begin{equation}
		\|\mathcal{N}^\pm_f\psi\|_{H^{\sigma}}\le  K_{s+\frac{1}{2},f}\|\psi\|_{H^{\sigma+1}}.		
	\end{equation}
	Moreover, it holds that for any $\sigma\in \big[\frac{1}{2},s- \frac{1}{2}	\big]$,
	\begin{equation}
		\|\big(\mathcal{N}^+_f-\mathcal{N}^-_f\big)\psi\|_{H^\sigma}\le K_{s+\frac{1}{2},f}\|\psi\|_{H^\sigma},
	\end{equation}
	where $K_{s+\frac{1}{2},f}$ is a constant depending on $c_0$ and $||f||_{H^s}$.
\end{proposition}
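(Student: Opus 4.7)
The natural plan is to invoke the paralinearization of $\mathcal{N}_f^\pm$ from Lemma \ref{decomposition1} and then apply the paradifferential mapping bounds in Appendix A. Writing
\begin{equation*}
\mathcal{N}_f^\pm\psi=T_{\lambda^\pm}\psi+R_1^\pm(f,\psi)+r_1^\pm(f,\psi),
\end{equation*}
the hypothesis $s>\frac{5}{2}$ gives the Sobolev embedding $H^{s+\frac{1}{2}}(\bbT^2)\hookrightarrow W^{2,\infty}(\bbT^2)$, so $M_0^1(\lambda^\pm)\le K_{s+\frac{1}{2},f}$ and Proposition \ref{para1} yields $\|T_{\lambda^\pm}\psi\|_{H^\sigma}\le K_{s+\frac{1}{2},f}\|\psi\|_{H^{\sigma+1}}$ uniformly in $\sigma\in\mathbb{R}$. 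This supplies the principal contribution in both inequalities.

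For the first inequality, the remainder bounds from Lemma \ref{decomposition1} control $\|R_1^\pm+r_1^\pm\|_{H^{s-\frac{1}{2}}}$ by $K_{s+\frac{1}{2},f}\|\psi\|_{H^{s-1}}$ once one unpacks the linear-in-$\psi$ structure hidden in the constant (e.g. $R_1^-(f,\psi)=-T_{\lambda^-}T_{\mathcal{B}}f-T_V\cdot\nabla f$ with both $\mathcal{B}$ and $V$ linear in $\psi$). Combined with $H^{s-\frac{1}{2}}\hookrightarrow H^\sigma$, this handles the range where $\sigma+1\ge s-1$. For the remaining range $\sigma\in[-\frac{1}{2},s-2]$ I would instead pull $\mathcal{H}_f^\pm\psi$ back along the harmonic map $\Phi_f^\pm:\Omega_*^\pm\to\Omega_f^\pm$: the transported function solves a divergence-form elliptic equation on the fixed reference domain with $H^{s-\frac{1}{2}}$ coefficients, and classical elliptic regularity together with the trace theorem give $\|\mathcal{N}_f^\pm\psi\|_{H^\sigma}\le K_{s+\frac{1}{2},f}\|\psi\|_{H^{\sigma+1}}$. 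The endpoint $\sigma=-\frac{1}{2}$ follows by duality from the variational identity $(\mathcal{N}_f^\pm\phi,\phi)=\|\nabla\mathcal{H}_f^\pm\phi\|_{L^2}^2\le K\|\phi\|_{H^{1/2}}^2$ recorded in the lemma preceding this proposition.

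The difference estimate rests on the exact cancellations $\lambda^{+(1)}=\lambda^{-(1)}$ and $\lambda^{-(0)}=-\overline{\lambda^{+(0)}}$ from Lemma \ref{decomposition1}, which force
\begin{equation*}
\lambda^+-\lambda^-=2\operatorname{Re}\lambda^{+(0)}\in\Sigma^0.
\end{equation*}
Hence $T_{\lambda^+-\lambda^-}:H^\sigma\to H^\sigma$ is bounded by $K_{s+\frac{1}{2},f}$ via Proposition \ref{para1}, while the remainder differences lie in $H^{s-\frac{1}{2}}\hookrightarrow H^\sigma$ for $\sigma\le s-\frac{1}{2}$. Together with the same reference-domain elliptic-regularity argument to absorb the part of the remainder that is not linearly controlled by $\|\psi\|_{H^\sigma}$ at small $\sigma$, this delivers the order-zero bound on $\sigma\in[\frac{1}{2},s-\frac{1}{2}]$; the lower cutoff $\sigma=\frac{1}{2}$ reflects the minimal Dirichlet trace regularity required for the harmonic extensions to exist.

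The main obstacle is the tension between the paralinearization's remainders, which are formulated at the fixed level $H^{s-\frac{1}{2}}$, and the wide Sobolev range $[-\frac{1}{2},s-\frac{1}{2}]$ that the inequalities must cover uniformly. The cleanest resolution is a two-regime approach: symbolic paradifferential estimates at high $\sigma$ (where $\|\psi\|_{H^{s-1}}$ is dominated by $\|\psi\|_{H^{\sigma+1}}$) combined with direct variable-coefficient elliptic regularity on $\Omega_*^\pm$ at low $\sigma$ (where the $H^{s-\frac{1}{2}}$ regularity of the coefficients from the harmonic coordinates is more than enough), interpolating across the overlap if needed.
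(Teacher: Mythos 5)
The paper does not prove Proposition \ref{prop:DN-Hs}: it is stated in Appendix A.4 as background, carried over from the cited literature (\cite{ABZ,SWZ1}), where Sobolev mapping bounds of this type for the Dirichlet--Neumann operator are obtained by transporting the harmonic extension to the fixed reference domain $\Omega_*^\pm$ and using variable-coefficient elliptic regularity with trace theorems, not paralinearization. So there is no paper-internal argument to measure you against, and what follows is a judgment of your sketch on its own terms.

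The paradifferential branch of your argument has a genuine gap. Lemma \ref{decomposition1} gives remainder estimates for $R_1^\pm$ and $r_1^\pm$ only at the single Sobolev level $H^{s-\frac12}$, and the bound on $R_1^\pm$ has the form $C(\|f\|_{H^3},\|\psi\|_{H^3})\|f\|_{H^{s+\frac12}}$ --- the $\psi$-dependence sits inside a nondecreasing constant, so this is neither linear in $\psi$ nor of the form $K\|\psi\|_{H^{\sigma+1}}$ for any $\sigma$. Saying the linear-in-$\psi$ structure can be "unpacked" is really claiming a $\sigma$-indexed family of remainder estimates that the cited lemma does not state; establishing that family is where most of the work of this proposition lies, and nothing in the appendix hands it to you. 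Moreover, the conclusion is claimed down to $\sigma=-\frac12$ while the lemma's output level $s-\frac12\ge 2$ sits far above that, so the elliptic/variational branch must carry the full low-$\sigma$ load --- but once you set up the transport to $\Omega_*^\pm$ and use the coefficient regularity coming from $f\in H^{s+\frac12}$, that branch proves the bound on the \emph{entire} interval $[-\frac12,s-\frac12]$ outright, so the paralinearization regime becomes redundant and the two-regime split is an artifact of the remainder bounds' level restriction. Your symbol cancellation $\lambda^+-\lambda^-=\lambda^{+(0)}+\overline{\lambda^{+(0)}}=2\operatorname{Re}\lambda^{+(0)}\in\Sigma^0$ is correct and is indeed the right structural observation for the difference estimate, but the same remainder caveat applies over $[\frac12,s-\frac12]$. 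The clean fix is to drop the two-regime framing and prove both inequalities directly by elliptic estimates on the reference domain, reserving the paralinearization (and the cancellation you noticed) as the heuristic explanation of why $\mathcal{N}_f^+-\mathcal{N}_f^-$ drops an order.
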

\begin{proposition}\label{prop:DN-inverse}
	If $f\in H^{s+\frac{1}{2}}(\bbT^2)$ for $s>\frac{5}{2}$, then it holds that for any $\sigma\in \big[-\frac{1}{2},s- \frac{1}{2}\big]$,
	\begin{equation}
		\|\mathcal{G}^\pm_f\psi\|_{H^{\sigma+1}}\le  K_{s+\frac{1}{2},f}\|\psi\|_{H^{\sigma}},
	\end{equation}
	where $\mathcal{G}^\pm_f\triangleq\big(\mathcal{N}^\pm_f\big)^{-1}$.
\end{proposition}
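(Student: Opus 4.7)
The plan is to prove this inverse estimate by combining the coercivity of $\mathcal{N}_f^\pm$ on mean-zero functions (for the base case) with the ellipticity of its principal symbol, exposed by the paralinearization in Lemma \ref{decomposition1} (for the elliptic gain). Since $\mathcal{G}_f^\pm = (\mathcal{N}_f^\pm)^{-1}$ is defined on $H^\sigma_0(\bbT^2)$ by the bijectivity statement for $\mathcal{N}_f^\pm$ recalled in Section~2, I may assume $\psi$ is mean-zero and set $\phi = \mathcal{G}_f^\pm \psi$, so that $\mathcal{N}_f^\pm \phi = \psi$; the goal becomes $\|\phi\|_{H^{\sigma+1}} \le K \|\psi\|_{H^\sigma}$.

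First I would dispose of the base case $\sigma = -1/2$ by pairing the equation $\mathcal{N}_f^\pm \phi = \psi$ with $\phi$ and using the coercivity inequality from Section~2:
\begin{align*}
c \|\phi\|_{H^{1/2}}^2 \le (\mathcal{N}_f^\pm \phi, \phi) = (\psi, \phi) \le \|\psi\|_{H^{-1/2}} \|\phi\|_{H^{1/2}},
\end{align*}
which gives $\|\phi\|_{H^{1/2}} \le C \|\psi\|_{H^{-1/2}}$. For $\sigma > -1/2$, I would use Lemma \ref{decomposition1} to rewrite
\begin{align*}
T_{\lambda^\pm} \phi = \psi - R_1^\pm(f,\phi) - r_1^\pm(f,\phi).
\end{align*}
Since $\lambda^\pm \in \Sigma^1$ is elliptic (Remark after Lemma \ref{lem-p-H}), Proposition \ref{symbol.pro3} produces the one-derivative elliptic gain
\begin{align*}
\|\phi\|_{H^{\sigma+1}} \le C(\|f\|_{H^3}) \bigl( \|\psi\|_{H^\sigma} + \|R_1^\pm(f,\phi)\|_{H^\sigma} + \|r_1^\pm(f,\phi)\|_{H^\sigma} + \|\phi\|_{L^2} \bigr).
\end{align*}
Bootstrapping from the base case in half-integer increments then raises the regularity of $\phi$ by one derivative per step, up to the top level $\sigma = s - 1/2$.

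The main obstacle is that Lemma \ref{decomposition1} states the remainder bounds only at the top regularity $s-1/2$, whereas the bootstrap requires them at every intermediate $\sigma$. I would overcome this by inspecting the structure of the remainders exhibited in the proof of Lemma \ref{decomposition1}: $R_1^\pm(f,\phi) = -T_{\lambda^\pm}T_{\mathcal B} f - T_V \cdot \nabla f$ with $\mathcal B, V$ depending linearly on $\nabla \phi$ and on $\mathcal{N}_f^\pm \phi$, while $r_1^\pm(f,\phi)$ is a smoother remainder. Applying Propositions \ref{para1}, \ref{symbol.pro1} and \ref{symbol.pro2} together with the product estimates in Proposition \ref{symbol.est3}, one obtains a tame bound $\|R_1^\pm(f,\phi)\|_{H^\sigma} + \|r_1^\pm(f,\phi)\|_{H^\sigma} \le C(\|f\|_{H^{s+1/2}}) \|\phi\|_{H^\sigma}$ valid for every $\sigma \le s - 1/2$. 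With this, each bootstrap step absorbs the remainder into the inductive hypothesis, the $\|\phi\|_{L^2}$ term is handled by the base estimate, and the final constant $K_{s+1/2,f}$ depends only on $c_0$ and $\|f\|_{H^{s+1/2}}$.
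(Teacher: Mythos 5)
The paper does not actually give a proof of Proposition~\ref{prop:DN-inverse}: it is stated in Appendix~A.4 without argument and is implicitly taken from the cited references \cite{ABZ,SWZ1}, so there is no in-paper proof to compare against. Evaluated on its own terms, your two-stage strategy---coercivity at the bottom endpoint $\sigma=-\frac12$ and an elliptic bootstrap through the paralinearization of $\mathcal N_f^\pm$---is the standard and correct way to obtain the inverse estimate, and the outline is essentially sound.

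Two technical points deserve more than the passing mention you give them. First, Lemma~\ref{decomposition1} as stated assumes $f\in H^{s+1}$, whereas Proposition~\ref{prop:DN-inverse} only grants $f\in H^{s+\frac12}$; you need to check (and it is true, since all constants in the proof of Lemma~\ref{decomposition1} are of the form $C(\|f\|_{H^{3}})$, $C(\|f\|_{H^{s-\frac12}})$, or $C(\|f\|_{H^{s+\frac12}})$) that the decomposition and its remainder bounds already hold at the lower regularity $H^{s+\frac12}$. Second, the tame remainder bound $\|R_1^\pm(f,\phi)\|_{H^\sigma}+\|r_1^\pm(f,\phi)\|_{H^\sigma}\le C\,\|\phi\|_{H^\sigma}$ is the crux, and it is not a consequence of Propositions~\ref{para1}, \ref{symbol.pro1}, \ref{symbol.pro2}, \ref{symbol.est3} alone when $\sigma$ is small. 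In $R_1^\pm=-T_{\lambda^\pm}T_{\mathcal B}f-T_V\cdot\nabla f$ it is $f$, not $\phi$, that sits in the high-frequency slot; $\phi$ enters only through the coefficients $\mathcal B,V$, which live at regularity $\sigma-1$ and are not in $L^\infty$ for $\sigma\le 1$. For that range you must invoke the low-regularity paraproduct bound of Lemma~\ref{symbol.est2} with $a=\mathcal B\in H^{\sigma-1}=H^{d/2-m}$, $m=2-\sigma>0$, which produces $\|T_{\mathcal B}f\|_{H^{s-\frac32+\sigma}}\lesssim\|\mathcal B\|_{H^{\sigma-1}}\|f\|_{H^{s+\frac12}}$; since $s>\frac52$, this lands above $H^\sigma$ after one application of $T_{\lambda^\pm}$, so the remainder is indeed controlled by $C(\|f\|_{H^{s+\frac12}})\|\phi\|_{H^\sigma}$. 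It is also worth noting that $\mathcal N_f^\pm\phi=\psi$ appears inside $\mathcal B$, so one may substitute $\psi$ directly and avoid any circularity. Finally, the bootstrap raises $\sigma$ by unit (not half-unit) increments, with a truncated last step; this is harmless but the phrasing should be corrected.
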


%Let

\end{appendix}
\section*{Acknowledgment}
The authors wish to express their thanks to Prof. Zhifei Zhang and Prof. Wei Wang for suggesting the problem and for many helpful discussions. This work was supported by NSF of China under Grant No. 11871424.


\begin{thebibliography}{99}

\bibitem{Axf} W. I. Axford, {\it Note on a problem of magnetohydrodynamic stability}, Canad. J. Phys., 40(1962), 654-655.

\bibitem{ABZ1} T. Alazard, N. Burq, and C. Zuily, {\it On the water-wave equations with surface tension}, Duke Math. J., 158(2011), 413-499.

\bibitem{ABZ} T. Alazard, N. Burq and C. Zuily, {\it On the Cauchy problem for gravity water waves},
 Invent. Math.,  198(2014), 71-163.
\bibitem{AlM} T. Alazard and G. M\'etivier, {\it Paralinearization of the Dirichlet to Neumann operator, and regularity of three-dimensional water waves}, Comm. Partial Differential Equations, 34(2009), 1632–1704.
\bibitem{AM1}  D.  M. Ambrose , {\it Well-posedness of vortex sheets with surface tension},  SIAM J. Math. Anal.,  35 (2003) 211-244.

\bibitem{AM}  D.  M. Ambrose and N. Masmoudi, {\it Well-posedness of 3D vortex sheets with surface tension}, Comm. Math. Sci.,  5 (2007) 391-430.
\bibitem{BCD} H. Bahouri, J. Y. Chemin and R. Danchin, {\it  Fourier analysis and nonlinear partial differential equations},  Springer, Heidelberg, 2011.
\bibitem{BHL}  J. T. Beale, T. Y. Hou and J. S. Lowengrub, {\it Growth rates for the linearized motion of fluid inter- faces away from equilibrium}, Comm. Pure Appl. Math.,  46 (1993) 1269-1301.
\bibitem{BJ}  J. M. Bony, {\it Calcul symbolique et propagation des singularites pour les équations aux derivees partielles non lineaires}, Ann. Sci. Ecole Norm. Sup. (4),   14(1981) 209-246.

\bibitem{CO} R. E. Caflisch and O. F. Orellana,{\it Long time existence for a slightly perturbed vortex sheet}, Comm. Pure Appl. Math., 39(1986), 807–838.

\bibitem{Chen} G.-Q. Chen and Y.-G Wang, {\it Existence and stability of compressible current-vortex sheets in three-dimensional magnetohydrodynamics},  Arch. Ration. Mech. Anal., 187(2008), 369-408.
\bibitem{CCS} C. A. Cheng, D. Coutand and S. Shkoller, {\it On the motion of vortex sheets with surface tension in three-dimensional Euler equations with vorticity}, Comm. Pure. Appl. Math., 61 (2008), 1715-1752.

\bibitem{CCS1} C. Cheng, D. Coutand and S. Shkoller, {\it  On the limit as the density ratio tends to zero for two perfect incompressible fluids separated by a surface of discontinuity},   Comm. Partial Differential Equations, 35(2010), 817-845.

\bibitem{CL}  D. Christodoulou , H. Lindblad , {\it On the motion of the free surface of a liquid},  Comm. Pure Appl. Math., 53 (2000), 1536–1602.

\bibitem{CMS} J.-F. Coulombel, A. Morando, P.  Secchi and P. Trebeschi, {\it A priori estimates for 3D incompressible current-vortex sheets},
              Comm. Math. Phys.,  311(2012), 247-275.
\bibitem{CS} D. Coutand and S. Shkoller, {\it  Well-posedness of the free-surface incompressible Euler equations with or without surface tension},   J. A. Math. Soc., 20(2007), 829-930.
\bibitem{GW}  X. Gu and Y. Wang, {\it On the construction of solutions to the free-surface incompressible ideal magnetohydrodynamic equations}, J. Math. Pure. Appl., 128(2019), 1-41.
\bibitem{GL}  X. Gu and Z. Lei, {\it Local well-posedness of the free boundary incompressible elastodynamics with surface tension}, arXiv:2008.13354.
\bibitem{Hao} C. Hao, {\it On the motion of free interface in ideal incompressible MHD}, Arch. Ration. Mech. Anal.,  224 (2017), 515-553.
\bibitem{HL} C. Hao and T. Luo, {\it A priori estimates for free boundary problem of incompressible inviscid magnetohydrodynamic flows}, Arch. Ration. Mech. Anal.,  212(2014), 805-847.
\bibitem{Hormander} L. Hormander , {\it Lectures on nonlinear hyperbolic differential equations}, Springer-Verlag, Berlin, (26)1997.
\bibitem{IP} G. Iooss and P. I. Plotnikov, {\it Small divisor problem in the theory of three-dimensional water gravity waves}, Mem. Amer. Math. Soc., 200(2009), 427-433.
\bibitem{Da} D. Lannes, {\it Well-posedness of the water-waves equations},   J. A. Math. Soc., 18(2005), 605-654.
\bibitem{LL} C. Li and H. Li, {\it Well-posedness of the two-phase flow problem in incompressible MHD}, to appear in Discret. Contin. Dyn. Syst.

\bibitem{MajB} A. Majda and A. Bertozzi, {\it Vorticity and incompressible flow}, Cambridge Texts in Applied Mathematics, 27,
 Cambridge University Press, Cambridge, 2002.

\bibitem{MG} G. Metivier, {\it Para-differential calculus and applications to the Cauchy problem for nonlinear systems}, Edizioni della Normale, Pisa,
(5)2008.

\bibitem{Meyer} Y. Meyer, {\it Remarques sur un th\'{e}or\`eme de {J}.-{M}. {B}ony}, Rend. Circ. Mat. Palermo (2), 1981, 1-20.
(5)2008.

\bibitem{MTT}    A. Morando, Y. Trakhinin and P. Trebeschi,  {\it Stability of incompressible current-vortex sheets},
J. Math. Anal. Appl., 347(2008), 502-520.

\bibitem{ST} P. Secchi and Y. Trakhinin, {\it Well-posedness of the plasma-vacuum interface problem},  Nonlinearity, 27(2014), 105-169.

\bibitem{Syr} S. I. Syrovatskij, {\it The stability of tangential discontinuities in a magnetohydrodynamic medium},
 Z. Eksperim. Teoret. Fiz., 24(1953), 622-629.

\bibitem{SWZ1} Y. Sun, W. Wang and Z. Zhang, {\it Nonlinear stability of the current-vortex sheet to the incompressible MHD equations}, Comm. Pure Appl. Math., 71(2018), 356-403.

\bibitem{SZ}  J. ~Shatah and C. Zeng,  {\it Geometry and  a priori estimates for free boundary problems of the Euler's equation}, Comm. Pure Appl. Math., 61(2008), 698-744.

\bibitem{SZ2} J. ~Shatah and C. Zeng,  {\it A priori estimates for fluid interface problems}, Comm. Pure Appl. Math., 61(2008), 848-876.

\bibitem{Tay}  M. E. Taylor,  {\it Pseudodifferential operators and nonlinear PDE}, Birkh\"{a}user Boston, Inc., Boston, MA, 100(1991), 213.

\bibitem{Tra} Y. Trakhinin, {\it On the existence of incompressible current-vortex sheets: study of a linearized free boundary value problem},
           Math. Methods Appl. Sci.,  28(2005), 917-945.
\bibitem{Tra1} Y. Trakhinin, {\it Existence of compressible current-vortex sheets: variable coefficients linear analysis}, Arch. Ration. Mech. Anal., 177(2005), 331-366.
\bibitem{Tra2} Y.  Trakhinin,  {\it The existence of current-vortex sheets in ideal compressible magnetohydrodynamics}, Arch. Ration. Mech. Anal., 191(2009), 245-310.
\bibitem{Tra-JDE} Y.  Trakhinin, {\it On the well-posedness of a linearized plasma-vacuum interface problem in ideal compressible MHD}, { J. Differential Equations}, {249}(2010), 2577-2599.
\bibitem{WY}  Y.-G. Wang and F. Yu,  {\it Stabilization effect of magnetic fields on two-dimensional compressible current-vortex sheets}, Arch. Ration. Mech. Anal.,  208(2013), 341-389.

\bibitem{Wsj1}  S. Wu, {\it Well-posedness in Sobolev spaces of the full water wave problem in $2$-D},  Invent. Math., 130(1997), 39-72.

\bibitem{Wsj2}  S. Wu, {\it Well-posedness in Sobolev spaces of the full water wave problem in 3-D}, J. ~Amer. ~Math. ~Soc., 12(1999), 445-495.

\bibitem{Wsj3}  S. Wu, {\it Mathematical analysis of vortex sheets}, Comm. Pure Appl. Math., 59(2006), 1065–1206.

\bibitem{ZpZzf}    P. Zhang and Z. Zhang, {\it On the free boundary problem of  three-dimensional incompressible Euler equations},
Comm. Pure Appl. Math., 61(2008), 877--940.



\end{thebibliography}
\end{document}